\documentclass{article}

\usepackage{color}
\usepackage{graphicx}
\usepackage{url}
\usepackage{amsmath}
\usepackage{amssymb,amsthm,amsfonts}
\usepackage[ruled, linesnumbered]{algorithm2e}

\newtheorem{theorem}{Theorem}

\newtheorem{proposition}{Proposition}
\newtheorem{corollary}{Corollary}
\newtheorem{lemma}{Lemma}

\newcommand{\Z}{\mathbf{Z}}
\newcommand{\T}{\mathbf{T}}
\newcommand{\BB}{\mathbf{B}}
\newcommand{\NN}{\mathbf{N}}
\newcommand{\CC}{\mathbf{C}}

\begin{document}

\title{
Periods of iterations of functions \\
with restricted preimage sizes
\footnote{An extended abstract of this work was presented 
at the 29th International Meeting on Probabilistic, 
Combinatorial and Asymptotic Methods for the Analysis 
of Algorithms (AofA 2018), Uppsala, Sweden.}
}


\author{Rodrigo S. V. Martins \\
Universidade Tecnol\'ogica Federal do Paran\'a, Apucarana, Brazil \\
{\tt email:} rodrigomartins@utfpr.edu.br \\ \\
Daniel Panario \\
Carleton University, Ottawa, Canada \\
{\tt email:} daniel@math.carleton.ca 
\footnote{This author was partially funded by NSERC of Canada.} \\ \\
Claudio Qureshi \\
Universidade Estadual de Campinas, Brazil \\
{\tt email:} cqureshi@ime.unicamp.br 
\footnote{This author was partially funded by FAPESP grant 2015/26420-1.} \\ \\
Eric Schmutz \\
Drexel University, Philadelphia, USA \\
{\tt email:} Eric.Jonathan.Schmutz@drexel.edu}

\date{}
\maketitle

\begin{abstract}
Let $[n] = \{1, \dots, n\}$ and let $\Omega_n$ be the set of all mappings from $[n]$ to itself. Let $f$ be a random uniform element of $\Omega_n$ and let $\T(f)$ and $\BB(f)$ denote, respectively, the least common multiple and the product of the length of the cycles of $f$. Harris proved in 1973 that $\log \T$ converges in distribution to a standard normal distribution and, in 2011, Schmutz obtained an asymptotic estimate on the logarithm of the expectation of $\T$ and $\BB$ over all mappings on $n$ nodes.
We obtain analogous results for random uniform mappings on $n = kr$ nodes with preimage sizes restricted to a set of the form $\{0,k\}$, where $k = k(r) \geq 2$. This is motivated by the use of these classes of mappings as heuristic models for the statistics of polynomials of the form $x^k + a$ over the integers modulo $p$, with $p \equiv 1 \pmod k$. We exhibit and discuss our numerical results on this heuristic.
\end{abstract}

\section{Introduction} \label{sec:introduction}

Let $f: [n] \to [n]$ be a function of a finite set to itself. 
The iterations of mappings have attracted interest in recent years due to applications in areas such as physics, biology, coding theory and cryptography. 
We highlight Pollard's factorization method for integers, which is based on iterations of quadratic polynomials over finite fields. The adaptation of Pollard's method to the discrete logarithm problem also relies on iterations of mappings.

In this work we focus on asymptotic results on the cycle structure of these dynamical systems. Let $f = f^{(0)}$ be a mapping on $n$ elements and consider the sequence of functional compositions $f^{(m)} = f \circ f^{(m-1)}$, $m \geq 1$. Since there are finitely many mappings on $n$ elements, there exists an integer $T$ such that $f^{(m + T)} = f^{(m)}$ for all $m \geq n$. The least integer $T = \T(f)$ satisfying this condition equals the order of the permutation obtained by restricting the mapping $f$ to its cyclic vertices.
Erd\"os and Tur\'an proved in \cite{ErdosTuran1967lognormality} that the logarithm of the corresponding random variable defined  over the symmetric group $S_n$ converges in distribution to a standard normal distribution, when properly centered and normalized.
By adapting Erd{\H o}s and Tur{\' a}n's ``statistical group theory approach'' \cite{ErdosTuran1967lognormality},
Harris was able to prove that the normalized random variable $(\log \T - \mu_n^*)/\sigma_n^*$, where $\mu_n^* = \frac{1}{2} \log^2 \sqrt{n}$ and $\sigma_n^* = \frac{1}{ \sqrt{3} } \log^{3/2} \sqrt{n}$, defined over the space of mapping with uniform distribution, converges in distribution to a standard normal distribution \cite{harris1973DistributionMappings}.
The expected value of $\T$ was estimated in \cite{schmutz2011period}:
\begin{equation} \label{schmutz_estimate_T}
\log \mathbb{E}_n( \T ) = k_0 \sqrt[3]{ \frac{n}{ \log^2 n } } \big( 1 + o(1) \big),
\end{equation}
where $k_0$ is a constant determined explicitly that is approximately $3.36$. 
The parameter $\T$ can be proven to be the least common multiple of the cycle lengths of the components of the functional graph of $f$. If $\BB(f)$ is the product of all cycle lengths of $f$ including multiplicities, then one might consider $\BB$ as an approximation for $\T$. For instance, Proposition 1.2 of \cite{schmutz2011period} implies that, for any $\delta > 0$, the sequence of random variables defined by
$$X_n = \frac{\log \BB - \log \T}{ \log^{1+\delta} n }, \quad n \geq 1,$$
converges in probability to zero. However, it is proved in \cite{schmutz2011period} that the expectation of $\BB$ deviates significantly from the expectation of $\T$:
\begin{equation} \label{schmutz_estimate_B}
\log \mathbb{E}_n( \BB ) = \frac{3}{2} \sqrt[3]{n} \big( 1 + o(1) \big).
\end{equation}

In this paper we derive similar results for the classes of $\{0,k\}$-mappings, $k \geq 2$, defined as mappings $f: [n] \to [n]$ such that $|f^{-1}(y)| \in \{0,k\}$ for all $y \in [n]$. We derive our results in the case where $k$ is allowed to approach infinity together with $n$. 
This might be desirable, for example, when modeling polynomials whose degree depends on the size of the prime $p$; see \cite{BurnetteSchmutz2015PeriodsRationalFunctions} for an example where this occurs.
We obtain asymptotic estimates for the logarithm of the expected value of $\T$ and $\BB$ over $\{0,k\}$-mappings on $n$ nodes. We also prove an analogue of Harris' result \cite{harris1973DistributionMappings} for $\{0,k\}$-mappings, that is, we prove that $\log \T$ converges in distribution to a standard normal distribution, when properly centered and normalized. An analogous result is obtained for the parameter $\BB$.

By now there is a rather large literature on the asymptotic distribution of random variables defined on mappings with indegree restrictions.
One motivation is methodological. Random mappings are important examples that serve as benchmarks for both probabilistic and analytic methods.  On the analytic side,  combinatorial methods can be used to identify generating functions whose coefficients are the quantities of interest. In many cases
it is  possible to estimate the coefficients asymptotically using complex analysis. A standard reference is 
\cite{FlajoletSedgewick2009AnalyticCombinatorics},  which includes several  applications to random mappings; see also \cite{FlajoletOdlyzko1990RandomMappings, kolchin1986RandomMappings}.
In another direction,  random mappings correspond to  a large class of random graphs $G_{f}$ for which the joint distribution of components sizes can be realized as independent random variables, conditioned on the number of vertices that the graph has.  Stein's method and coupling have been used to prove strong and general results \cite{ArratiaBarbourTavare2000LimitsStructures, ArratiaBarbourTavare2003LogarithmicStructures}.  One application of this 
theory is a generalization of the theorem of Harris \cite{harris1973DistributionMappings} that was  mentioned above. However the proofs in our paper are elementary, and do not directly use any of these probabilistic techniques.

The research on random mappings with such restrictions is motivated also by the Brent-Pollard heuristic, where one uses these objects as a model for the statistics of polynomials. It was introduced by Pollard in the analysis of his factorization method: he conjectured that quadratic polynomials modulo large primes behave like random mappings with respect to their average rho length \cite{pollard1975factorization}. However, the indegree distribution of a class of mappings impacts the asymptotic distribution of a number of parameters \cite{ArneyBender1982mappings}. Since it is known that the functional graph of a quadratic polynomial over $\mathbb{F}_p$ has just one node with indegree $1$ and the remaining nodes are split in half between indegrees $0$ or $2$, $\{0,2\}$-mappings could provide a better heuristic model for quadratic polynomials; see \cite{MartinsPanario2015heuristic} for a discussion of alternative models for the Brent-Pollard heuristic.
Furthermore, the class of $\{0,k\}$-mappings also provides a good heuristic model for polynomials of the form $x^k + a \in \mathbb{F}_p[x]$ with $p \equiv 1 \pmod k$. This heuristic model was used in \cite{BrentPollard1981heuristic} to predict that Pollard's method is sped up in some cases if these polynomials are used, eventually leading to the factorization of the eighth Fermat number. 
We exhibit our numerical results on the behavior of $\T$ and $\BB$ over different classes of polynomials over finite fields and investigate different classes of mappings as heuristic models for the behavior of $\T$ and $\BB$ over these classes of polynomials.

This paper is organized as follows. In Section \ref{section_preliminary_results} we establish our notation and present the basic results that are needed for our main theorems; they concern mostly the distribution of the parameter that corresponds to the number of cyclic vertices of a mapping. In Section \ref{section_expected_value_of_T} we prove an asymptotic estimate for the expectation of $\T$ over $\{0,k\}$-mappings. An analogous result for the parameter $\BB$ is presented in Section \ref{section_expected_value_of_B}. 
In Section \ref{section_lognormality} we prove that the logarithm of both parameters $\T$ and $\BB$, when properly centered and normalized, converge in distribution to a standard normal distribution; we also prove in this section that $\log \BB - \log \T$, when properly normalized, converges in probability to zero. In Section \ref{section_heuristics} we present theoretical and numerical results concerning the use of classes of $\{0,k\}$-mappings as heuristic models for certain classes of polynomials.

\section{Preliminary Results} \label{section_preliminary_results}

For $f$ a mapping, let ${\cal Z} = {\cal Z}(f)$ be the set of cyclic nodes of $f$ and let $\Z = | {\cal Z} |$. In the proof of our asymptotic results we make extensive use of the law of total probability, splitting the space of random uniform $\{0,k\}$-mappings according to the value that the random variable $\Z$ assumes.
In this section we present and derive basic results concerning the distribution of this random variable over $\{0,k\}$-mappings.
To avoid confusion, we index probabilities and expected values by the set of allowed indegrees of the class of mappings in question: $\mathbb{N}$ in the unrestricted case \cite{schmutz2011period} or $\{0,k\}$ in our case. For example, the expected value of $\T$ over all mappings on $n$ nodes is denoted by $\mathbb{E}_n^\mathbb{N}(\T)$, whereas $\mathbb{E}_n^{ \{0,k\} }(\T)$ denotes the expectation of $\T$ over $\{0,k\}$-mappings on $n$ nodes.

The following theorem gives an exact result on the distribution of $\Z$ over $\{0,k\}$-mappings \cite{RubinSitgreaves1953RandomMappings}.
We note that a $\{0,k\}$-mapping $f$ of size $n$ satisfies $n = kr$, where $r \geq 1$ denotes the cardinality of the range of $f$. Also, the coalescence of $f$, defined as the variance of its distribution of indegrees under uniform distribution, satisfies
$$\lambda = \lambda(f) = \sum_{y \in [n]} \frac{| f^{(-1)}(y) |^2}{n} - 1 = r \frac{k^2}{n} - 1 = k-1.$$

\begin{theorem}[Equation (3.17) of \cite{RubinSitgreaves1953RandomMappings}] \label{theorem_distribution_Z_0,k}
Let $n = kr$, $\lambda = k-1 \geq 1$ and $1 \leq m \leq r$. A random uniform $\{0,k\}$-mapping on $n$ nodes has exactly $m$ cyclic nodes with probability
$$\mathbb{P}_n^{ \{0,k\} } ( \Z = m ) = \lambda k^{m-1} {r-1 \choose m-1} {n-1 \choose m}^{-1}.$$
\end{theorem}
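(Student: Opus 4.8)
The plan is to compute the probability as a ratio $\mathbb{P}_n^{\{0,k\}}(\Z = m) = A(n,m)/N$, where $N$ is the total number of $\{0,k\}$-mappings on $[n]$ and $A(n,m)$ counts those having exactly $m$ cyclic nodes. The denominator is immediate: a $\{0,k\}$-mapping is determined by choosing its range $S$ of size $r$ and then distributing the $n$ nodes into the $r$ fibers of size $k$, so $N = \binom{n}{r}\, n!/(k!)^r$. For the numerator I would first record the indegree bookkeeping forced by the restriction. Every cyclic node lies in the range and hence has indegree $k$, exactly one unit of which is supplied by its (unique) cyclic predecessor; so, after deleting the edges of the cyclic permutation, each of the $m$ cyclic nodes becomes the root of a tree in which it has exactly $k-1$ children, every non-cyclic node of the range has exactly $k$ children, and every non-range node is a leaf.

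This yields a bijection between $\{0,k\}$-mappings with $m$ cyclic nodes and pairs consisting of (i) a permutation of the $m$ roots and (ii) a rooted labeled forest of $m$ trees of the above type. I would encode the forest with exponential generating functions. Let $B(x)$ count rooted labeled trees in which every node has $0$ or $k$ children, and let $R(x)$ count the trees hanging off a cyclic node, whose root has exactly $k-1$ children; then
\[
B(x) = x\left(1 + \frac{B(x)^k}{k!}\right), \qquad R(x) = x\,\frac{B(x)^{k-1}}{(k-1)!}.
\]
Since a set of $m$ such trees has EGF $R(x)^m/m!$ and the permutation of the roots contributes a factor $m!$, the $m!$ cancels and $A(n,m) = n!\,[x^n]\,R(x)^m = \dfrac{n!}{((k-1)!)^m}\,[x^{n-m}]\,B(x)^{(k-1)m}$.

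The extraction of $[x^{n-m}] B^{(k-1)m}$ is where Lagrange inversion enters. Writing $B = x\phi(B)$ with $\phi(w) = 1 + w^k/k!$, the Lagrange--B\"urmann formula \cite{FlajoletSedgewick2009AnalyticCombinatorics} gives
\[
[x^{n-m}]\,B(x)^{(k-1)m} = \frac{(k-1)m}{n-m}\,[w^{\,n-km}]\,\Big(1+\tfrac{w^k}{k!}\Big)^{n-m}.
\]
Because only powers $w^{k}$ occur and $n - km = k(r-m)$, the inner coefficient is the single binomial term $\binom{n-m}{r-m}(k!)^{-(r-m)}$. Substituting into $A(n,m)/N$, using $k!/(k-1)! = k$ together with $n = kr$ (so that $r/n = 1/k$), the factorials collapse and one is left with exactly $(k-1)\,k^{m-1}\binom{r-1}{m-1}\binom{n-1}{m}^{-1}$.

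I expect the genuine work to be in the first two steps: correctly accounting for indegrees so as to assign the children-count $k-1$ to cyclic roots versus $k$ to the remaining range nodes, and thereby writing down the two functional equations. Once the generating functions are in place, the Lagrange inversion and the final cancellation are routine, and the hypothesis $m \le r$ is precisely what makes the coefficient extraction legitimate (it guarantees $(k-1)m \le n-m$ and $r-m \ge 0$). As an internal check, the case $n=4$, $k=2$ gives $A(4,1)=12$ and $A(4,2)=24$ against $N=36$, reproducing $\mathbb{P}=1/3$ and $2/3$ as predicted by the formula.
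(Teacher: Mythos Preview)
Your argument is correct. The paper does not supply its own proof of this statement; it simply quotes the formula from Rubin and Sitgreaves \cite{RubinSitgreaves1953RandomMappings} as a known result. Your derivation via exponential generating functions and Lagrange--B\"urmann inversion is a clean, self-contained proof, and the bookkeeping (the $k-1$ children at cyclic roots versus $k$ at non-cyclic range nodes, the cancellation $r/n=1/k$) is handled accurately. The sanity check at $n=4$, $k=2$ is also consistent with the formula.
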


It is possible to extend the quantity above to real numbers using the Gamma function $\Gamma( \cdot )$, since $n! = \Gamma(n+1)$ for any integer $n \geq 1$ (see Chapter 6 of \cite{AbramowitzStegun1965HandbookMathFunctions}):
\begin{equation} \label{distribution_Z_gamma_function}
\mathbb{P}_n^{ \{0,k\} } ( \Z = m ) = 
\lambda m k^{m-1} \frac{ \Gamma(r) }{ \Gamma(r-m+1) } \frac{ \Gamma(n-m) }{ \Gamma(n) }.
\end{equation}

In this work we consider $\{0,k\}$-mappings on $n = kr$ elements, where $r$ denotes the size of their range and $k = k(r)$ is a sequence of integers satisfying $k \geq 2$ for all $r \geq 1$. Although $n(r)$ and $k(r)$ are functions of $r$, we omit this dependence on our notation and write simply $n$ and $k$.
We emphasize that \emph{all asymptotic calculations and results in this work are taken as $r$ approaches infinity}, unless otherwise stated. 
We assume throughout the paper that, for some $0 < \alpha < 1$, $k = o(n^{1 - \alpha})$ as $r$ approaches infinity, or equivalently, $\log n=O(\log(n/\lambda))$ where $\lambda=k-1$.

Lemma \ref{lemma_gamma_function} below combines well known facts about the Gamma function $\Gamma(z)$ and the Digamma function $\Psi(z) = \frac{d}{dz} \log \Gamma(z)$; see Chapter 6 of \cite{AbramowitzStegun1965HandbookMathFunctions}.
Lemma \ref{lemma_equatios_615_616} is a simple consequence of Lemma \ref{lemma_gamma_function} and is used in the calculations of Sections \ref{section_expected_value_of_T} and \ref{section_expected_value_of_B}, so we state here for future reference.

\begin{lemma}(Chapter 6 of \cite{AbramowitzStegun1965HandbookMathFunctions}) \label{lemma_gamma_function}
The Gamma function satisfies
$$\log \Gamma(y) = y \log y - y - \frac{1}{2} \log y + \frac{1}{2} \log (2 \pi) + o(1),$$
as $y$ approaches infinity. Moreover, let $\Psi(z)$ be the derivative of $\log \Gamma(z)$. Then, as $y$ approaches infinity,
$$\Psi(y) = \log y + O \left( \frac{1}{y} \right) \quad \mbox{and} \quad \Psi^\prime (x) = \sum_{k = 0}^\infty \frac{1}{ ( x + k )^2 }.$$
\end{lemma}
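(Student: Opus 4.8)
The three statements are classical properties of the Gamma and Digamma functions, so the plan is to recall the standard derivations rather than develop new machinery. For the first identity, note that it is equivalent to Stirling's formula $\log\Gamma(y) = (y - \tfrac12)\log y - y + \tfrac12\log(2\pi) + o(1)$, since $y\log y - \tfrac12\log y = (y-\tfrac12)\log y$. I would obtain this by applying Laplace's method to the integral representation $\Gamma(y) = \int_0^\infty t^{y-1}e^{-t}\,dt$. After the substitution $t = ys$ this becomes $\Gamma(y) = y^y\int_0^\infty e^{y(\log s - s)}\,\tfrac{ds}{s}$, and the exponent $\phi(s) = \log s - s$ attains its maximum at $s = 1$ with $\phi(1) = -1$ and $\phi''(1) = -1$. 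The Gaussian approximation around $s = 1$ contributes a factor $\sqrt{2\pi/y}$ while $s^{-1}$ evaluated at the saddle contributes $1$, giving $\Gamma(y) \sim \sqrt{2\pi}\,y^{y-1/2}e^{-y}$; taking logarithms yields the claimed expansion, with an $o(1)$ (in fact $O(1/y)$) remainder coming from the standard tail estimates of the method.

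For the Digamma function I would start from its logarithmic-derivative series $\Psi(z) = -\gamma - \tfrac1z + \sum_{k=1}^\infty\bigl(\tfrac1k - \tfrac{1}{z+k}\bigr)$, obtained by taking $\tfrac{d}{dz}\log$ of the Weierstrass product for $1/\Gamma$. Differentiating this series term by term gives $\Psi'(x) = \tfrac{1}{x^2} + \sum_{k=1}^\infty \tfrac{1}{(x+k)^2} = \sum_{k=0}^\infty \tfrac{1}{(x+k)^2}$, which is precisely the third claim; the differentiated series converges uniformly on compact subsets of $\mathbb{C}\setminus\{0,-1,-2,\ldots\}$, and this justifies the term-by-term operation. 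The asymptotic $\Psi(y) = \log y + O(1/y)$ then follows either by differentiating the Stirling expansion above, or more directly from Binet's integral formula $\Psi(y) = \log y - \tfrac{1}{2y} - 2\int_0^\infty \tfrac{t\,dt}{(t^2+y^2)(e^{2\pi t}-1)}$, in which the integral is $O(1/y^2)$ and the term $-1/(2y)$ is absorbed into the $O(1/y)$ error.

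The only genuinely delicate point is pinning down the additive constant $\tfrac12\log(2\pi)$ in Stirling's formula: crude bounds on the integral give the shape $y\log y - y$ but not the exact constant, which emerges only from the Gaussian normalization $\int_{-\infty}^\infty e^{-u^2/2}\,du = \sqrt{2\pi}$ in Laplace's method (equivalently, from the Wallis product). The remaining work — controlling the tails of the Laplace integral and verifying the uniform convergence of the differentiated Digamma series — is routine, and in any case all three statements are tabulated in Chapter 6 of the cited handbook, so it suffices to invoke them.
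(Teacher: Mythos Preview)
Your sketch is correct, but note that the paper does not actually prove this lemma: it is stated with the attribution ``(Chapter 6 of \cite{AbramowitzStegun1965HandbookMathFunctions})'' and no proof is given. The paper treats all three facts as standard reference material to be invoked, which is exactly what you conclude in your final sentence. Your outlined derivations (Laplace's method for Stirling, termwise differentiation of the Weierstrass product for $\Psi'$, and Binet's formula or differentiated Stirling for the $\Psi$ asymptotic) are the standard ones and are fine, but they go well beyond what the paper itself supplies.
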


\begin{lemma} \label{lemma_equatios_615_616}
Let $\Psi(z)$ be the derivative of $\log \Gamma(z)$ and let $n,k,r$ be integers such that $n = kr$. If $x = o(r)$ then, as $r$ approaches infinity,
\begin{itemize}
\item[(i)] $\displaystyle \ \ \Psi(n - x) = 
\log n - \frac{x}{n} + O \left( \frac{x^2}{n^2} \right) + O \left( \frac{1}{n} \right)$,
\item[(ii)] $\displaystyle \ \ \Psi(r - x) 
= \log r - \frac{x}{r} + O \left( \frac{x^2}{r^2} \right) + O \left( \frac{1}{r} \right)$,
\item[(iii)] $\displaystyle \ \ \ \log \Gamma(n - x) - \log \Gamma(n) = - x \log n + \frac{ x^2 }{ 2n } + O \left( \frac{ x^3 }{ n^2 } \right) + o(1)$,
\item[(iv)] $\displaystyle \ \ \ \log \Gamma(r) - \log \Gamma(r - x + 1) = x \log r - \log r - \frac{ x^2 }{ 2r } + O \left( \frac{ x^3 }{ r^2 } \right) + o(1)$.
\end{itemize}
\end{lemma}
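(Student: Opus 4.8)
The plan is to treat (i)--(ii) and (iii)--(iv) as two parallel computations, each reducing to a Taylor expansion of $\log(1 - x/N)$ with $N = n$ or $N = r$. The single fact that makes all four estimates legitimate is that $x = o(r)$ together with $n = kr$, $k \geq 2$, forces both $x/r \to 0$ and $x/n \le x/(2r) \to 0$; hence $n - x \sim n$ and $r - x \sim r$, and the logarithmic series $\log(1-\epsilon) = -\epsilon - \tfrac{\epsilon^2}{2} + O(\epsilon^3)$ may be applied with $\epsilon = x/n$ or $\epsilon = x/r$, bounded away from $1$.

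For (i) and (ii) I would substitute $y = n - x$ (respectively $y = r - x$) into the asymptotic $\Psi(y) = \log y + O(1/y)$ from Lemma \ref{lemma_gamma_function}. Writing $\log(n - x) = \log n + \log(1 - x/n) = \log n - \tfrac{x}{n} + O(x^2/n^2)$ and noting that $O(1/(n-x)) = O(1/n)$ because $n - x \sim n$ yields (i) immediately; (ii) is identical with $n$ replaced by $r$.

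For (iii) and (iv) the cleanest route is to integrate the Digamma function: since $\Psi = (\log\Gamma)'$, one has $\log\Gamma(n) - \log\Gamma(n-x) = \int_{n-x}^{n} \Psi(t)\,dt$. Splitting $\Psi(t) = \log t + O(1/t)$, the error integral is $O(x/n) = o(1)$, while $\int_{n-x}^n \log t\,dt = n\log n - (n-x)\log(n-x) - x$; one may equally apply the Stirling expansion of $\log\Gamma$ from Lemma \ref{lemma_gamma_function} directly. The key algebraic step, used in both parts, is the expansion
\[
(N-x)\log(N-x) - N\log N = -x\log N - x + \frac{x^2}{2N} + O\!\left(\frac{x^3}{N^2}\right),
\]
obtained by writing $(N-x)\log(N-x) = N\log(N-x) - x\log(N-x)$ and inserting the series for $\log(1-x/N)$. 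Taking $N = n$ gives (iii) after the $\pm x$ terms cancel. For (iv) I would use the same identity with $N = r$ and argument $r - x + 1$: substituting $u = x - 1$ so that $r - x + 1 = r - u$, one gets $x\log r - \log r - \tfrac{(x-1)^2}{2r} + O(x^3/r^2)$, and then $-\tfrac{(x-1)^2}{2r} = -\tfrac{x^2}{2r} + \tfrac{x}{r} - \tfrac{1}{2r}$ with the last two terms absorbed into $o(1)$ since $x = o(r)$.

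I expect the only real difficulty to be the bookkeeping of error terms: deciding which contributions are kept explicitly and which collapse into $o(1)$. The genuinely delicate case is (iv), where the shifted argument $r - x + 1$ (rather than $r - x$) produces extra constant and linear-in-$x$ pieces; one must check that, after the substitution $u = x - 1$, these reassemble into exactly the stated $-\log r$ together with $o(1)$ remainders. Everywhere the absorption of $x/n$, $x/r$ and $1/r$ into $o(1)$ rests on the standing hypothesis $x = o(r)$ (equivalently $\log n = O(\log(n/\lambda))$), which must be invoked explicitly.
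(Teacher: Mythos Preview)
Your proposal is correct and follows essentially the same approach as the paper: parts (i)--(ii) are handled identically, and for (iii)--(iv) the paper applies the Stirling expansion of $\log\Gamma$ directly (which you mention as an alternative), arriving at the same key identity $(N-x)\log(N-x) - N\log N = -x\log N - x + \tfrac{x^2}{2N} + O(x^3/N^2)$. Your primary suggestion of integrating $\Psi$ over $[n-x,n]$ is a minor and arguably cleaner variant, since it sidesteps the $\tfrac12\log$ terms from Stirling by absorbing them into the $O(x/n)=o(1)$ integral; the paper instead carries those terms explicitly and observes $\tfrac12\log(1-x/n)=o(1)$. Your treatment of the shift in (iv) via $u=x-1$ is also more explicit than the paper's, which simply asserts that (iv) follows ``by the same arguments.''
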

\begin{proof}
It follows directly from Lemma \ref{lemma_gamma_function} that
$$\Psi(n-x) = \log (n-x) + O \left( \frac{1}{n-x} \right) = \log n + \log \left( 1 - \frac{x}{n} \right) + O \left( \frac{1}{n} \right).$$
The estimate for $\Psi(n-x)$ follows from the estimate $\log (1-z) = -z + O(z^2)$, as $z$ approaches zero. The same argument proves the estimate for $\Psi(h-x)$.

We prove now items (iii) and (iv).
It follows from Lemma \ref{lemma_gamma_function} that
$$\log \Gamma(n - x) - \log \Gamma(n) =
(n - x) \log (n - x) + x - \frac{\log (n - x)}{2} - n \log n 
+ \frac{\log n}{2} + o(1).$$
We use the fact that $\log (n-x) = \log n + \log(1 - x/n)$ to obtain
$$\log \Gamma(n - x) - \log \Gamma(n) =
-x \log n + (n - x) \log \left( 1 - \frac{x}{n} \right) + x 
- \frac{1}{2} \log \left( 1 - \frac{x}{n} \right) + o(1).$$
Since $x = o(r)$ implies $x = o(n)$, we have $\log(1 - x/n) = o(1)$. The expansion of $\log(1 - z)$ then yields
$$\log \Gamma(n - x) - \log \Gamma(n) =
-x \log n + (n - x) \left( - \frac{x}{n} - \frac{ x^2 }{ 2n^2 } + O \left( \frac{ x^3 }{ n^3 } \right) \right) + x + o(1),$$
and hence,
$$\log \Gamma(n - x) - \log \Gamma(n) = - x \log n + \frac{ x^2 }{ 2n } + O \left( \frac{ x^3 }{ n^2 } \right) + o(1).$$
The estimate for $\log \Gamma(r) - \log \Gamma(r - x + 1)$ follows by the same arguments.
\hfill
\end{proof}

In the following lemma we obtain an asymptotic estimate on the distribution of $\Z$ over $\{0,k\}$-mappings; see \cite{ArneyBender1982mappings} for a similar result in a more general setting.

\begin{lemma} \label{lemma_asymptotic_distribution_Z_0,k}
Let $\lambda  = k-1$. If $m = m(r)$ is a sequence of positive integers such that $m = o(r)$, then the distribution of the number of cyclic nodes on a $\{0,k\}$-mapping on $n = kr$ nodes satisfies
$$\mathbb{P}_n^{ \{0,k\} } ( \Z = m ) = 
\frac{ \lambda m }{n} \exp \left( - \frac{ \lambda m^2 }{ 2n }  + O \left( \frac{ m^3 }{ r^2 } \right) + o(1) \right),$$
as $r$ approaches infinity. Moreover, if $m = m(r)$ is a sequence of real numbers such that $m \to \infty$ and $m = o(r)$ as $r \to \infty$, then $\mathbb{P}_n^{ \{0,k\} } ( \Z = \lfloor m \rfloor )$, $r \geq 1$, is asymptotically equivalent to the quantity above as $r$ approaches infinity. 
\end{lemma}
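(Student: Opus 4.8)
The plan is to take logarithms in the exact formula \eqref{distribution_Z_gamma_function} and apply Lemma \ref{lemma_equatios_615_616} term by term. Writing $L = \log \mathbb{P}_n^{ \{0,k\} }(\Z = m)$, the product structure gives
$$L = \log(\lambda m) + (m-1)\log k + \big( \log \Gamma(r) - \log \Gamma(r-m+1) \big) + \big( \log \Gamma(n-m) - \log \Gamma(n) \big).$$
Since $m = o(r)$, the hypothesis $x = o(r)$ of Lemma \ref{lemma_equatios_615_616} holds with $x = m$, so items (iii) and (iv) apply directly and supply the asymptotic expansions of the two bracketed Gamma differences, each with an $o(1)$ error that we fold into a single $o(1)$.

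The heart of the computation is a clean cancellation. Substituting (iii) and (iv) with $x = m$, the purely logarithmic contributions combine into $(m-1)\log k + m\log r - \log r - m\log n$, and using $n = kr$, hence $\log n = \log k + \log r$, this collapses to $-\log k - \log r = -\log n$. For the quadratic terms, the identity $1/r = k/n$ turns $-\frac{m^2}{2r} + \frac{m^2}{2n}$ into $-\frac{(k-1)m^2}{2n} = -\frac{\lambda m^2}{2n}$. Finally, because $n = kr \geq r$, the cubic error $O(m^3/n^2)$ is dominated by $O(m^3/r^2)$ and the two merge. Thus
$$L = \log\!\left( \frac{\lambda m}{n} \right) - \frac{\lambda m^2}{2n} + O\!\left( \frac{m^3}{r^2} \right) + o(1),$$
and exponentiating yields the claimed formula.

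For the final assertion I would apply the integer case to $\lfloor m \rfloor$ and then replace $\lfloor m \rfloor$ by the real number $m$ throughout, absorbing the discrepancies into the error terms. Writing $\lfloor m \rfloor = m + O(1)$, the prefactor satisfies $\frac{\lambda \lfloor m \rfloor}{n} = \frac{\lambda m}{n} \big( 1 + O(1/m) \big)$, which tends to $\frac{\lambda m}{n}$ since $m \to \infty$. In the exponent, $\lfloor m \rfloor^2 = m^2 + O(m)$, so $\frac{\lambda \lfloor m \rfloor^2}{2n}$ differs from $\frac{\lambda m^2}{2n}$ by $O(\lambda m / n) = O(m/r) = o(1)$, absorbed into the $o(1)$ term, while $\lfloor m \rfloor^3 / r^2 = m^3/r^2 + O(m^2/r^2)$ keeps the cubic error at $O(m^3/r^2)$. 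This gives the asserted asymptotic equivalence. The proof is essentially bookkeeping rather than a genuine obstacle; the only points requiring care are verifying that the $\log k$ and $\log r$ contributions cancel exactly and that every error term from Lemma \ref{lemma_equatios_615_616} combines into the single stated bound. The hypothesis $m = o(r)$ is precisely what both licenses the use of Lemma \ref{lemma_equatios_615_616} and guarantees $\lambda m / n = o(1)$ in the floor-to-real comparison.
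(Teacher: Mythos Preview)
Your proof is correct and follows essentially the same approach as the paper: take logarithms in \eqref{distribution_Z_gamma_function}, apply Lemma~\ref{lemma_equatios_615_616}(iii)--(iv) with $x=m$, observe the cancellation of the $\log k$, $\log r$, $\log n$ terms via $n=kr$, combine the quadratic pieces using $\lambda=k-1$, and absorb the $O(m^3/n^2)$ error into $O(m^3/r^2)$. The treatment of the $\lfloor m \rfloor$ case likewise mirrors the paper's argument almost verbatim.
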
 
\begin{proof}
Let $S_1 = \log \Gamma(r) - \log \Gamma(r - m + 1)$ and $S_2 = \log \Gamma(n - m) - \log \Gamma(n)$.
It follows from Equation (\ref{distribution_Z_gamma_function}) that
$$\log \mathbb{P}_n^{ \{0,k\} } ( \Z = m ) = \log \left( \frac{\lambda m}{k} \right) + m \log k + S_1 + S_2.$$
Since $m \log k + m \log r - m \log n = 0$, Lemma \ref{lemma_equatios_615_616} implies
$$\log \mathbb{P}_n^{ \{0,k\} } ( \Z = m ) =
\log \left( \frac{\lambda m}{k} \right) - \log r - \frac{ m^2 }{ 2r } 
+ \frac{ m^2 }{ 2n } + O \left( \frac{ m^3 }{ r^2 } \right) + o(1).$$
The first result follows from $n = kr$ and $\lambda = k-1$. 

Let $m = m(r)$ be a sequence of real numbers such that $m \to \infty$ and $m = o(r)$ as $r \to \infty$. We note that $\lfloor m \rfloor = m + O(1) = m (1 + O(m^{-1})) = m (1 + o(1))$. Since $(1 + O(m^{-1}))^2 = 1 + O(m^{-1})$, using the first part of the lemma we obtain
\begin{align*}
  & \mathbb{P}_n^{ \{0,k\} } ( \Z = \lfloor m \rfloor ) \\
= & 
\frac{ \lambda m }{n} (1 + o(1)) \exp \left( - \frac{ \lambda m^2 }{2n} (1 + O(m^{-1}))  + O \left( \frac{ m^3 }{ r^2 } \right) + o(1) \right) \\
= &
\frac{ \lambda m }{n} \exp \left( - \frac{ \lambda m^2 }{2n} + O \left( \frac{\lambda m}{n} \right)  + O \left( \frac{ m^3 }{ r^2 } \right) + o(1) \right).
\end{align*}
The second result follows from the fact that $\lambda m/n = O(m/r) = o(1)$.
\hfill
\end{proof}

\begin{lemma} \label{lemma_unimodal}
Let $n,k \geq 2$ be fixed integers such that $n = kr$ for some $r \geq 1$. Then there exists a positive  real number $m_{\#}$ such that the sequence $(z_m)_m$ defined by
$$z_m = \mathbb{P}_n^{\{ 0,k\}}(\Z = m), \quad m \geq 1,$$
is increasing for $m < m_{\#}$ and decreasing for $m > m_{\#}.$
Furthermore, $m_\#$ verifies $\lambda m_{\#} (m_{\#} +1) = n$ and $m_{\#} = \sqrt{n/\lambda} + O(1)$.
\end{lemma}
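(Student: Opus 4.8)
The plan is to study the ratio of consecutive terms $z_{m+1}/z_m$ and to show that it exceeds $1$ exactly for small $m$ and drops below $1$ for large $m$, with the crossover at the asserted value $m_\#$. Starting from the exact expression in Theorem~\ref{theorem_distribution_Z_0,k}, namely $z_m = \lambda k^{m-1}\binom{r-1}{m-1}\binom{n-1}{m}^{-1}$, which is supported on $1 \le m \le r$, I would form this ratio for $1 \le m \le r-1$. The factor $\lambda$ cancels and the binomial quotients telescope, $\binom{r-1}{m}/\binom{r-1}{m-1} = (r-m)/m$ and $\binom{n-1}{m}/\binom{n-1}{m+1} = (m+1)/(n-1-m)$, leaving the clean closed form
\[
\frac{z_{m+1}}{z_m} = \frac{k\,(r-m)(m+1)}{m\,(n-1-m)}.
\]

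Next I would determine the sign of $z_{m+1}/z_m - 1$. Since $1 \le m \le r-1$ forces $m > 0$ and $n-1-m \ge n-r = \lambda r \ge 1 > 0$, the denominator is positive, so $z_{m+1} > z_m$ is equivalent to $k(r-m)(m+1) > m(n-1-m)$. Substituting $n = kr$ and $\lambda = k-1$ and simplifying, the difference of the two sides collapses to $n - \lambda m(m+1)$, whence
\[
z_{m+1} > z_m \iff \lambda m(m+1) < n, \qquad z_{m+1} < z_m \iff \lambda m(m+1) > n.
\]
The map $g(t) = \lambda t(t+1)$ is strictly increasing for $t > 0$, so the equation $g(t) = n$ has a unique positive real root $m_\#$, and $\{m : g(m) < n\}$ is an initial segment. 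This gives the unimodality directly: $(z_m)_m$ is increasing while $m < m_\#$ and decreasing while $m > m_\#$, with the right endpoint $m = r$ trivially covered since $z_{r+1} = 0 < z_r$. By construction $m_\#$ satisfies $\lambda m_\#(m_\#+1) = n$.

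Finally, solving the quadratic $\lambda m_\#^2 + \lambda m_\# - n = 0$ yields $m_\# = \big(-1 + \sqrt{1 + 4n/\lambda}\,\big)/2$, which is positive because $n/\lambda = kr/(k-1) \ge 1$. To extract the stated estimate I would use the elementary bounds $2\sqrt{n/\lambda} \le \sqrt{1 + 4n/\lambda} \le 1 + 2\sqrt{n/\lambda}$, which place $m_\#$ in $[\sqrt{n/\lambda} - \tfrac12,\ \sqrt{n/\lambda}]$ and hence give $m_\# = \sqrt{n/\lambda} + O(1)$. I do not anticipate a genuine obstacle: the whole argument is an elementary ratio computation. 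The only points demanding care are confirming that the denominator stays positive over the full range $1 \le m \le r-1$ and handling the right endpoint $m = r$ where the support of the formula terminates, both of which are immediate consequences of $n = kr$ with $k \ge 2$.
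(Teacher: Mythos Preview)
Your proposal is correct and follows essentially the same approach as the paper: both arguments compute the ratio $z_{m+1}/z_m$ from Theorem~\ref{theorem_distribution_Z_0,k}, reduce the comparison with $1$ to the inequality $\lambda m(m+1) \lessgtr n$, deduce unimodality from the monotonicity of $t\mapsto t(t+1)$, and solve the quadratic to obtain $m_\# = \tfrac12\big(-1+\sqrt{1+4n/\lambda}\big) = \sqrt{n/\lambda}+O(1)$. Your ratio $\frac{k(r-m)(m+1)}{m(n-1-m)}$ is algebraically identical to the paper's $\frac{n-km}{n-m-1}\cdot\frac{m+1}{m}$ since $n=kr$, and your explicit interval $[\sqrt{n/\lambda}-\tfrac12,\sqrt{n/\lambda}]$ is a clean way to get the $O(1)$ bound.
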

\begin{proof} 
First we note that $z_m = 0$ for $m > r$.
Let $R_m= z_{m+1} / z_m$, $1 \leq m \leq r$, be the ratio of consecutive probabilities. It suffices to find a number $m_{\#}$ that $R_m \geq 1$ for $1 \leq m < m_\#$ and that $R_m\leq 1$ for $m_\# \leq m \leq r$.  
Using Theorem \ref{theorem_distribution_Z_0,k} we obtain
$$R_m= \frac{n - km}{n - m - 1}\frac{m+1}{m}.$$
We note that $R_m < 1$ is equivalent to $(n - km)(m+1) < m(n - m - 1)$. Since $n = kr$ and $\lambda = k-1$, this is equivalent to
\begin{equation} \label{ratios}
\frac{n}{\lambda}< m(m+1).
\end{equation}
The function $m \mapsto m(m+1)$ assumes the value $0$ if $m = 0$ and it approaches infinity when so does $m$. This function is monotone increasing, hence there exists a positive real number $m_{\#}$ such that $R_m \geq 1$ for $m < m_{\#}$ and $R_m < 1$ for  $m > m_{\#}$. Since $r(r+1) \geq n/\lambda$, then $m_\# \leq r$. This proves the first part of the  lemma.  We can explicitly calculate $m_\#$ by solving Equation (\ref{ratios}) as a quadratic equation for $m$:
\begin{equation}
\label{msharp}
m_\# = - \frac{1}{2} + \frac{1}{2} \left( 1 + 4 \cdot \frac{n}{\lambda} \right)^{1/2} =
\sqrt{ \frac{n}{\lambda} } \left( 1 + O \left( \sqrt{\frac{\lambda}{n} }\right) \right).
\end{equation}
\hfill
\end{proof}

In Section \ref{section_lognormality} we split the range $[1,n]$ of possible values for $\Z$ in three intervals using sequences $\xi_1 = \xi_1(n)$ and $\xi_2 = \xi_2(n)$, where $[\xi_1, \xi_2]$ defines a sequence of intervals that becomes increasingly narrow around the mode $m_\#$ (see Lemma \ref{lemma_unimodal}). 
We prove in Lemma \ref{lemma_concentration} below that $\Z$ is concentrated in the interval $[\xi_1, \xi_2]$.
We observe that, for $k \geq 2$ fixed, it is proved in \cite{ArneyBender1982mappings} that $\mathbb{E}_n^{ \{0,k\} }(\Z) \sim \sqrt{\pi n/2 \lambda}$, hence the mode $m_\#$ has the same order of growth than the expectation of $\Z$. 

\begin{lemma} \label{lemma_concentration} 
Let $\varepsilon_n = \log^{-3/4}( \sqrt{n/\lambda} )$, $\xi_1 = m_{\#}^{1-\varepsilon_n}$,  and $\xi_2=m_{\#}^{1+\varepsilon_n}$. If $c$ is any positive constant  less than $2^{3/4}$, then  for all sufficiently large $n$, 
\begin{itemize}
\item[(i)] $\ \ \mathbb{P}_n^{\{ 0,k\}} (\Z < \xi_1) )\leq  \exp \left( - c \log^{1/4} \left( \frac{n}{\lambda }\right) \right),    $
\item[(ii)] $\ \ \mathbb{P}_n^{\{ 0,k\}} (\Z > \xi_2) \leq \exp \left( -  c \log^{1/4}  \left( \frac{n}{\lambda }\right) \right)$, and
\item[(iii)] $\ \ \ \mathbb{P}_n^{\{ 0,k\}} (\xi_1 \leq \Z \leq \xi_2) \geq 1- 2\exp \left( - c \log^{1/4} \left( \frac{n}{\lambda }\right) \right).$
\end{itemize}
\end{lemma}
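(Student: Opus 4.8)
The plan is to deduce part (iii) immediately from (i) and (ii) via the complementary union bound, $\mathbb{P}_n^{\{0,k\}}(\xi_1 \le \Z \le \xi_2) = 1 - \mathbb{P}_n^{\{0,k\}}(\Z < \xi_1) - \mathbb{P}_n^{\{0,k\}}(\Z > \xi_2)$, so the entire task reduces to the two tail estimates. Throughout I abbreviate $L = \log(n/\lambda)$ and $\ell = \log m_\#$, and I record the facts I will lean on. By the standing hypothesis $n/\lambda = \Theta(r) \to \infty$, so $L \to \infty$. By Lemma \ref{lemma_unimodal}, $m_\# = \sqrt{n/\lambda} + O(1)$, hence $\ell = \tfrac12 L + o(1)$; since $\varepsilon_n \to 0$ we have $\xi_1 = m_\#^{1-\varepsilon_n} < m_\# < m_\#^{1+\varepsilon_n} = \xi_2$ for large $n$, so $z_m = \mathbb{P}_n^{\{0,k\}}(\Z = m)$ is increasing on $[1,\xi_1]$ and decreasing on $[\xi_2,r]$. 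Finally, $\xi_1 = O(\sqrt{n/\lambda}) = O(\sqrt r)$ and $\xi_2 = m_\#^{1+o(1)} = r^{1/2+o(1)}$, so both tend to infinity and lie in the range $m = o(r^{2/3})$ where the error term $O(m^3/r^2)$ of Lemma \ref{lemma_asymptotic_distribution_Z_0,k} is $o(1)$; thus the asymptotic formula (and, via its second part, its value at real $m$) is available at $m = \xi_1$ and $m = \xi_2$.

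For the lower tail (i), monotonicity below the mode gives the crude but sufficient bound $\mathbb{P}_n^{\{0,k\}}(\Z < \xi_1) \le \xi_1\, z_{\lfloor\xi_1\rfloor}$, namely the number of terms times the largest one, where $z_{\lfloor\xi_1\rfloor}$ is asymptotically equivalent to the formula of Lemma \ref{lemma_asymptotic_distribution_Z_0,k} evaluated at $m=\xi_1$. Inserting that formula, $\xi_1 z_{\xi_1} = \tfrac{\lambda \xi_1^2}{n}\exp(-\tfrac{\lambda \xi_1^2}{2n} + o(1))$. The crucial point is that here the Gaussian factor is harmless: writing $u = \lambda \xi_1^2/n = \tfrac{\lambda m_\#^2}{n}\, m_\#^{-2\varepsilon_n}$ and using $\lambda m_\#(m_\#+1) = n$ (so $\tfrac{\lambda m_\#^2}{n} \to 1$) together with $m_\#^{-2\varepsilon_n} = e^{-2\varepsilon_n \ell} \to 0$, we get $u \to 0$, whence $\xi_1 z_{\xi_1} = u\, e^{-u/2 + o(1)} = u(1+o(1))$. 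Thus everything is governed by $\log u = -2\varepsilon_n \ell + o(1)$. Since $\ell = \tfrac12 L + o(1)$ and $\varepsilon_n = (L/2)^{-3/4}$, we obtain $2\varepsilon_n \ell = 2(L/2)^{1/4}(1+o(1)) = 2^{3/4} L^{1/4}(1+o(1))$, so $\log(\xi_1 z_{\xi_1}) = -2^{3/4}\log^{1/4}(n/\lambda)\,(1+o(1))$. As $2^{3/4}(1+o(1)) > c$ for every fixed $c < 2^{3/4}$ and all large $n$, part (i) follows. The delicate part of the whole lemma is exactly this bookkeeping: the $+\ell$ from the factor $\xi_1$ (number of terms) cancels the leading $-\ell$ from the prefactor $\lambda \xi_1/n$, and the surviving $-2\varepsilon_n \ell$ is what pins down the sharp constant $2^{3/4}$.

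For the upper tail (ii) a single-term bound is inadequate, since up to $r-\xi_2$ terms are involved; instead I sum geometrically using the ratio $R_m = z_{m+1}/z_m = \tfrac{n-km}{n-m-1}\cdot\tfrac{m+1}{m}$ from the proof of Lemma \ref{lemma_unimodal}. A short computation shows both factors are decreasing in $m$ on $[\xi_2,r]$: the second obviously, and for the first the sign of the numerator of its derivative is $-\lambda n + k \le 0$ for $k \ge 2$ and $n \ge 2$. Hence $R_m \le R_{\xi_2} < 1$ for all $m \in [\xi_2,r]$, which yields $\mathbb{P}_n^{\{0,k\}}(\Z > \xi_2) \le z_{\xi_2}/(1 - R_{\xi_2})$. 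Using $1 - R_{\xi_2} = \tfrac{\lambda \xi_2(\xi_2+1) - n}{\xi_2(n-\xi_2-1)}$ together with $z_{\xi_2} \approx \tfrac{\lambda \xi_2}{n} e^{-\lambda \xi_2^2/(2n)}$, and setting $v = \lambda \xi_2^2/n \approx m_\#^{2\varepsilon_n} = e^{2\varepsilon_n \ell} \to \infty$, one finds $z_{\xi_2}/(1-R_{\xi_2}) = \tfrac{v}{v-1}\,e^{-v/2}(1+o(1))$.

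Now the Gaussian factor dominates: $\tfrac12 v \approx \tfrac12 e^{2(L/2)^{1/4}}$ grows faster than any power of $L$, so this bound lies far below $\exp(-c\log^{1/4}(n/\lambda))$ for any fixed $c$, giving (ii) with enormous room to spare; part (iii) then follows as noted. This asymmetry is to be expected — the lower tail is the binding constraint and fixes the constant $2^{3/4}$, whereas the upper tail decays super-polynomially and the only real care it needs is the rigorous geometric summation via the monotonicity of $R_m$.
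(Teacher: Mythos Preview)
Your argument is correct. Part (i) and the deduction of (iii) match the paper's proof essentially line for line: the paper also bounds $\mathbb{P}_n^{\{0,k\}}(\Z<\xi_1)\le \xi_1\,\mathbb{P}_n^{\{0,k\}}(\Z=\lfloor\xi_1\rfloor)$ by monotonicity below the mode, applies Lemma~\ref{lemma_asymptotic_distribution_Z_0,k}, and isolates the quantity $\lambda\xi_1^2/n\le (n/\lambda)^{-\varepsilon_n}=\exp(-2^{3/4}\log^{1/4}(n/\lambda))$; your computation of $\log u=-2\varepsilon_n\ell+o(1)$ is exactly the same mechanism.

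For part (ii) you take a genuinely different route. The paper uses the crudest possible bound above the mode, namely $\mathbb{P}_n^{\{0,k\}}(\Z>\xi_2)\le \frac{n}{k}\,\mathbb{P}_n^{\{0,k\}}(\Z=\lfloor\xi_2\rfloor)$ (at most $r$ terms, each at most the first one), and then shows that the Gaussian factor $-\lambda\xi_2^2/(2n)\sim -\tfrac12\exp(2^{3/4}L^{1/4})$ swamps the polynomial prefactor. You instead establish that the ratio $R_m$ itself is monotone decreasing on $[\xi_2,r]$ and sum the tail as a geometric series, obtaining $z_{\xi_2}/(1-R_{\xi_2})\sim\frac{v}{v-1}e^{-v/2}$. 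Both arguments land on the same dominant term $e^{-v/2}$, and both give a bound that is doubly exponentially small in $L^{1/4}$, far stronger than what (ii) demands. Your version is arguably cleaner since it avoids the spurious $n/k$ factor that the paper then has to absorb, at the cost of the short derivative check on the first factor of $R_m$. One notational nit: $R_{\xi_2}$ and $z_{\xi_2}$ should strictly be taken at the nearest integer, but the second part of Lemma~\ref{lemma_asymptotic_distribution_Z_0,k} and the monotonicity of $R_m$ make this harmless.
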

\begin{proof}
Since $\xi_1< m_{\#}$,  Lemma \ref{lemma_unimodal} implies that the probabilities are increasing on the first interval:
$$\mathbb{P}_n^{\{0,k\}} (\Z  < \xi_1) = \sum_{m < \xi_1} \mathbb{P}_n^{\{ 0,k\}} (\Z = m) \leq 
\sum_{m \leq \xi_1} \mathbb{P}_n^{\{ 0,k\}} (\Z = \lfloor \xi_1\rfloor),$$
and hence $\mathbb{P}_n^{\{ 0,k\}} (\Z < \xi_1) \leq \xi_1 \mathbb{P}_n^{\{ 0,k\}} (\Z = \lfloor \xi_1 \rfloor)$.
Therefore, by Lemma \ref{lemma_asymptotic_distribution_Z_0,k},
\begin{equation} \label{cdfxi1}
\mathbb{P}_n^{\{ 0,k\}} (\Z < \xi_1) \leq 
\frac{ \lambda \xi_1 ^2 }{n} \exp \left( -\frac{ \lambda\lfloor \xi_1\rfloor^{2}}{2n}
           + O \left( \frac{\xi^{3}_1}{r^2}\right)+o(1) \right).
\end{equation}
To estimate the right hand side of (\ref{cdfxi1}), first observe that, 
from the definition of $m_\#$, we have
$ \frac{n}{\lambda}= m_\# ( m_\# +1)\geq  m^{2}_\# $. Therefore
 $m_\# \leq \sqrt{\frac{n}{\lambda}}$ for all $n$, and 

\begin{equation}
\label{nonasymptotic}
\frac{ \lambda \xi_1^2 }{n} \leq \frac{\lambda}{n} \left( \sqrt{\frac{n}{\lambda}} \right)^{2(1 - \varepsilon_n)} =
\left( \frac{n}{\lambda} \right)^{-\varepsilon_n} =
\exp \left( - 2^{3/4} \log^{1/4} \left( \frac{n}{\lambda } \right)\right).
\end{equation}
In the exponent on the right hand side of $(\ref{cdfxi1})$, we have that
 $-\frac{ \lambda\lfloor \xi_1\rfloor^{2} }{2n}\leq 0$. Since $r^{-1} = O( \lambda/n )$, it is also straightforward to check
 that $\frac{\xi^{3}_1}{r^2}\rightarrow 0$.
Thus 
\[ \mathbb{P}_n^{\{ 0,k\}} (\Z < \xi_1) \leq \exp \left( - 2^{3/4} \log^{1/4} \left( \frac{n}{\lambda } \right)\right) \exp(o(1)).\]
It follows that if $c$ is any positive constant less than 
$2^{3/4} $, then for all sufficiently large $n$, we have the inequality in part (i) of the lemma:
\begin{equation}
\label{lowertail}
\mathbb{P}_n^{\{ 0,k\}} (\Z < \xi_1) \leq \exp \left( - c  \log^{1/4} \left( \frac{n}{\lambda }\right) \right).
\end{equation}

In order to estimate the upper tail, we  again use  monotonicity. Because of the restriction on
in-degrees, we know that the number of cyclic vertices is at most $\frac{n}{k}$.
Using this and  Lemma \ref{lemma_unimodal} we get
$\mathbb{P}_n^{\{ 0,k\}} (\Z > \xi_2) \leq \frac{n}{k} \mathbb{P}_n^{\{ 0,k\}} (\Z = \lfloor \xi_2 \rfloor)$.
Therefore, using Lemma \ref{lemma_asymptotic_distribution_Z_0,k} and applying logarithm on both sides we obtain
\begin{equation} \label{cdfxi2}
\log \mathbb{P}_n^{\{ 0,k\}}(   \Z  > \xi_2 )\leq  \log\left( \frac{ \lambda \xi_{2}}{k}\right) -\frac{\lambda \lfloor \xi_2\rfloor^{2}}{2n} + O \left( \frac{\xi^{3}_2 }{r^2} \right)  + o(1).
\end{equation}
Using the formula $m_\# = \sqrt{ \frac{n}{\lambda} } \left( 1 + O \left( \sqrt{\frac{\lambda}{n}} \right) \right)$ from (\ref{msharp}), 
 together with the definition $\xi_2:=m_{\#}^{1+\varepsilon_n}$, we get
\begin{equation} \label{xi_2^2/n}
-\frac{ \lambda \lfloor\xi_{2} \rfloor^{2} }{n}  \sim
-\exp \left( 2^{3/4} \log^{1/4} \left( \frac{n}{\lambda }\right) \right).
\end{equation}
This large negative term accounts for the fact that the upper tail bound (\ref{cdfxi2}) is small.
In (\ref{cdfxi2}), the first term is negligible since
\[\log\left( \frac{ \lambda \xi_2 }{k}\right) \leq \log \xi_2 
=  O\left(\log \frac{n}{\lambda}\right).\]
In (\ref{cdfxi2}), the term $O \left( \frac{\xi^{3}_2 }{r^2} \right)$ 
is also negligible. To see this, recall that $m_\# \leq \sqrt{\frac{n}
{\lambda}}$. Since $r= \frac{n}{\lambda+1}\geq \frac{n}{2\lambda}$, 
it follows from the definition of $\xi_{2}$ that
$$ \frac{\xi^{3}_2 }{r^2} 
 < \frac{ (\frac{n}{\lambda})^{\frac{3}{2}(1+\varepsilon_{n})}}
 { \frac{1}{4}(\frac{n}{\lambda})^{2}}
 = \left(\frac{n}{\lambda}\right)^{-\frac{1}{2}+o(1)}=o(1).$$
Thus
 \begin{equation}\label{bound2}
 \log \mathbb{P}_n^{\{ 0,k\}} (\Z > \xi_2) \leq  - \exp\left( 2^{3/4}\log^{1/4} \left( \frac{n}{\lambda }\right)\right) \cdot (1+o(1)).
\end{equation}
Part (ii) of the lemma  is a weak  consequence of (\ref{bound2}) that is 
convenient for future reference:  if $c$ is any  positive constant, then for all sufficiently large $n$,
\begin{equation}
\label{crude}
 \mathbb{P}_n^{\{ 0,k\}} (\Z > \xi_2) \leq  \exp \left( -c\log^{1/4} \left( \frac{n}{\lambda }\right) \right).
 \end{equation}
Part (iii) of the lemma follows immediately from (\ref{lowertail}), (\ref{crude}), and the fact that the sum of the three probabilities is 1.

%
%
%
%
\end{proof}

It is a well known fact that the restriction of a random unrestricted mapping $f$ to its set $\cal Z$ of cyclic nodes is a uniform random permutation of $\cal Z$, but this also holds for $ \{0,k\} $-mappings; see Lemma 1 of \cite{ArneyBender1982mappings}. We state this result below for future reference. We denote the symmetric group on $n$ elements by $S_n$ and remark that if $f: [n] \to [n]$ is a mapping such that $\Z(f) = m$, then there exists a unique permutation $\sigma_f \in S_m$ and an increasing function $\varphi_f: {\cal Z}(f) \rightarrow [m]$ such that $\varphi_f \circ f \circ \varphi_f^{-1} = \sigma_f$. We write $f\big|_{\cal Z} \equiv \sigma$ in this case.

\begin{lemma}  \label{lemma_restriction_to_Z}
Let $n = kr$, $n,k \geq 2$. Let $\cal A$ be a subset of $[n]$ with $m$ elements and let $\sigma \in S_m$. If $m \leq n/k$, then 
$$\mathbb{P}_n^{\{ 0,k\}} \left( f \big|_{\cal A} \equiv \sigma \, \big| \, {\cal Z} = {\cal A} \right) = \frac{1}{m!}.$$
\end{lemma}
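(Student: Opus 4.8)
The plan is to prove that the conditional distribution on $S_m$ is uniform by a bijective argument: for each pair $\sigma,\tau \in S_m$ I will exhibit a bijection between the set of $\{0,k\}$-mappings $f$ with $\mathcal{Z}(f) = \mathcal{A}$ and $f\big|_{\mathcal A} \equiv \sigma$ and the analogous set for $\tau$. The events $\{f\big|_{\mathcal A} \equiv \sigma\}$, as $\sigma$ ranges over $S_m$, partition the event $\{\mathcal{Z} = \mathcal{A}\}$ into $m!$ classes, so once these classes are shown to be equinumerous each acquires conditional probability $1/m!$.

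First I would record the structural fact behind the hypothesis $m \leq n/k$. Every cyclic node lies on a cycle, hence has a cyclic predecessor and therefore indegree at least one; by the $\{0,k\}$ restriction its indegree is then exactly $k$. Thus the $m$ cyclic nodes occupy $m$ of the $r = n/k$ range elements, so $\mathcal{A}$ can be the cyclic set of some $\{0,k\}$-mapping only when $m \leq n/k$. Writing $\mathcal{B} = [n] \setminus \mathcal{A}$, I would decompose any such $f$ into two parts: the cyclic action, which is a permutation of $\mathcal{A}$ recorded, after relabelling by the increasing map $\varphi_f$, as $\sigma \in S_m$; and the tree part, namely the restriction $f\big|_{\mathcal B}$ together with the edges leading from $\mathcal{B}$ into $\mathcal{A}$. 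Under this decomposition each $a \in \mathcal{A}$ receives exactly one preimage from $\mathcal{A}$, its cyclic predecessor $\sigma^{-1}(a)$, and exactly $k-1$ preimages from $\mathcal{B}$, while every node of $\mathcal{B}$ has indegree $0$ or $k$ with all its preimages lying in $\mathcal{B}$.

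The bijection is then obtained by rewiring only the cyclic edges. Given $f$ with $f\big|_{\mathcal A} \equiv \sigma$, I define $g$ to agree with $f$ on $\mathcal{B}$ and on every edge from $\mathcal{B}$ into $\mathcal{A}$, but to act on $\mathcal{A}$ as the permutation $\varphi_f^{-1} \circ \tau \circ \varphi_f$ rather than $\varphi_f^{-1} \circ \sigma \circ \varphi_f$ (note that $\varphi_f$ depends only on $\mathcal{A}$, so it is common to all mappings in play). The point to verify is that $g$ is again a $\{0,k\}$-mapping with $\mathcal{Z}(g) = \mathcal{A}$ and $g\big|_{\mathcal A} \equiv \tau$: the indegree at each node of $\mathcal{B}$ is untouched, while each $a \in \mathcal{A}$ still receives its $k-1$ tree preimages from $\mathcal{B}$ together with exactly one cyclic preimage, now $\tau^{-1}(a)$ in place of $\sigma^{-1}(a)$, for a total indegree of $k$. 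Since the tree part is unchanged and $\tau$ is a permutation of $\mathcal{A}$, the cyclic set of $g$ is again precisely $\mathcal{A}$. The construction is manifestly invertible, by swapping the roles of $\sigma$ and $\tau$, so it is a bijection.

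The step I expect to be delicate is exactly this verification that the rewiring preserves both the indegree condition and the identity $\mathcal{Z}(g) = \mathcal{A}$: one must confirm that replacing the cyclic predecessor of each $a \in \mathcal{A}$ keeps its total indegree at exactly $k$, and that no node of $\mathcal{B}$ is promoted onto a cycle nor any node of $\mathcal{A}$ demoted off the cycles. The latter follows because the only edges altered are those internal to $\mathcal{A}$, and these continue to form a single permutation of $\mathcal{A}$, so the decomposition into cyclic and tree parts is preserved in type. Once the bijection is in hand, the $m!$ classes indexed by $\sigma \in S_m$ all have the same cardinality, their disjoint union is the event $\{\mathcal{Z} = \mathcal{A}\}$, and the value $\tfrac{1}{m!}$ follows immediately.
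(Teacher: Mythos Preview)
Your bijective argument is correct. The paper itself does not supply a proof of this lemma: it simply cites Lemma~1 of Arney and Bender \cite{ArneyBender1982mappings} and states the result for future reference. Your rewiring construction---keeping the forest structure $f|_{\mathcal B}$ and the attachment edges from $\mathcal B$ into $\mathcal A$ fixed while replacing the permutation on $\mathcal A$---is exactly the natural way to see the claim, and your verification that the indegree condition and the identity $\mathcal Z(g)=\mathcal A$ are preserved is sound. The one place worth a sentence more of justification is that no $b\in\mathcal B$ becomes cyclic in $g$: this holds because the $f$-trajectory of $b$ reaches $\mathcal A$ using only edges with source in $\mathcal B$, all of which are unchanged in $g$, and once in $\mathcal A$ the $g$-orbit stays in $\mathcal A$; hence $b$ never returns to itself. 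With that spelled out, your proof is a clean self-contained alternative to the citation the paper relies on.
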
 

The following lemma is used in Sections \ref{section_expected_value_of_T} and \ref{section_expected_value_of_B} in our asymptotic estimates: we obtain upper and lower bounds for the expectation of $\T$ and $\BB$ in the form of item (ii) of the lemma below.

\begin{lemma} \label{lemma_calc}
Let $\langle L_{r} \rangle_{r=1}^{\infty}$ and $\langle A_{r} \rangle_{r=1}^{\infty}$ be sequences of positive real numbers. Then
the following are equivalent:
\begin{itemize}
\item[(i)] $L_{r}=A_{r}(1+o(1))$ as $r\rightarrow\infty$;
\item[(ii)] for any $\varepsilon>0$, there exists $R = R(\varepsilon)$ such that the inequalities 
$(1 - \varepsilon) A_{r} < L_{r} < (1 + \varepsilon) A_{r}$ hold for all $r > R$.
\end{itemize}
\end{lemma}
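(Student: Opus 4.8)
The plan is to observe that both statements are merely two phrasings of the single assertion that the ratio $L_r/A_r$ converges to $1$, and that the positivity of the $A_r$ is exactly what licenses passing freely between the multiplicative bounds in (ii) and the $o(1)$ notation in (i). Since $A_r > 0$ for every $r$, I would set $\delta_r := L_r/A_r - 1$, so that $L_r = A_r(1 + \delta_r)$ by construction. With this notation, condition (i) is by definition the assertion that $\delta_r \to 0$ as $r \to \infty$.

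For the implication (i) $\Rightarrow$ (ii), I would fix $\varepsilon > 0$ and invoke the definition of $\delta_r \to 0$ to obtain an index $R(\varepsilon)$ beyond which $|\delta_r| < \varepsilon$, i.e.\ $1 - \varepsilon < L_r/A_r < 1 + \varepsilon$; multiplying through by the positive quantity $A_r$ preserves the direction of the inequalities and yields exactly the chain $(1-\varepsilon)A_r < L_r < (1+\varepsilon)A_r$ for $r > R(\varepsilon)$. For the converse (ii) $\Rightarrow$ (i), I would run the same computation in reverse: given $\varepsilon > 0$, take the $R(\varepsilon)$ supplied by (ii) and divide the inequalities $(1-\varepsilon)A_r < L_r < (1+\varepsilon)A_r$ by $A_r > 0$ to recover $|\delta_r| < \varepsilon$ for all $r > R(\varepsilon)$. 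This is precisely the $\varepsilon$--$R$ formulation of $\delta_r \to 0$, whence $L_r/A_r = 1 + o(1)$ and therefore $L_r = A_r(1 + o(1))$.

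There is essentially no genuine obstacle here: the lemma is a definitional unpacking of the Landau symbol, and the only point demanding any care is that every manipulation relies on $A_r > 0$, so that multiplication and division neither flip nor invalidate the inequalities. The statement is isolated as a lemma mainly so that the later asymptotic estimates for $\mathbb{E}_n^{\{0,k\}}(\T)$ and $\mathbb{E}_n^{\{0,k\}}(\BB)$, which are produced naturally as two-sided multiplicative bounds of the form (ii), can be quoted directly in the cleaner $A_r(1+o(1))$ form (i).
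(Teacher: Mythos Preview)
Your proof is correct and follows essentially the same approach as the paper: both reduce the equivalence to the observation that $L_r/A_r - 1 \to 0$ is exactly the $\varepsilon$--$R$ statement in (ii), using positivity of $A_r$ to pass between the ratio and the multiplicative bounds. Your write-up is, if anything, slightly more explicit about the two directions and the role of $A_r > 0$.
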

\begin{proof}
First we note that $L_{r} = A_{r}(1+o(1))$ if and only if $\frac{L_{r}}{A_{r}} - 1 = o(1)$, that is, if and only if 
$\lim\limits_{r\rightarrow\infty} (\frac{L_{r}}{A_{r}}-1) = 0$. By definition of a limit, this holds if and only if for any $\varepsilon > 0$ there exists $R = R(\varepsilon)$ such that $|\frac{L_{r}}{A_{r}}-1| < \varepsilon$ for all $r > R$. It can be easily checked that this condition is equivalent to $(1-\varepsilon_1)A_{r}<L_{r}<(1+\varepsilon_1)A_{r}$.
\hfill
\end{proof}

\section{Expected Value of $\T$} \label{section_expected_value_of_T}

In this section we obtain asymptotic estimates for $\mathbb{E}_n^{ \{0,k\} } (\T)$ following the same strategy as in \cite{schmutz2011period}, that we describe next.
We can write the expected value of $\T$ over all $\{0,k\}$-mappings as
\begin{equation}
\mathbb{E}_n^{ \{0,k\} } ( \T ) = \sum_{m = 1}^n \mathbb{P}_n^{ \{0,k\} } ( \Z = m ) \mathbb{E}_n^{ \{0,k\} } ( \T | \Z = m ).
\label{expected_value_of_T_preliminary}
\end{equation}
If we let $M_m$ be the expected order of a uniform random permutation of $S_m$, then Equation (\ref{expected_value_of_T_preliminary}) and Lemma \ref{lemma_restriction_to_Z} imply
\begin{equation}
\mathbb{E}_n^{ \{0,k\} } ( \T ) = \sum_{m = 1}^n \mathbb{P}_n^{ \{0,k\} } ( \Z = m ) M_m.
\label{expected_value_of_T}
\end{equation}
The author in \cite{schmutz2011period} combines an exact result for $\mathbb{P}_n^\mathbb{N} ( \Z = m )$ with the following lemma to estimate the expected value of $\T$ asymptotically in the case of unrestricted mappings. We use Theorem \ref{theorem_distribution_Z_0,k} for the distribution of $\Z$ over $\{0,k\}$-mappings.

\begin{lemma}(\cite{Stong1998}) \label{lemma_M_m}
Let $M_m$ be the expected order of a random permutation of $S_m$. Let $\beta_0 = \sqrt{8 I}$, where
\begin{equation} \label{I_definition_constant}
I = \int_0^\infty \log \log \left( \frac{e}{ 1 - e^{-t} } \right) dt.
\end{equation}
Then, as $m$ approaches infinity,
$$\log M_m = \beta_0 \sqrt{ \frac{m}{\log m} } + O \left( \frac{ \sqrt{m} \log \log m }{\log m} \right).$$
In particular, if $\varepsilon_1 \in (-1,0)$, $\varepsilon_2 \in (0,1)$ and $\beta_\varepsilon = \beta_0 + \varepsilon$, there exists $N_\varepsilon$ such that, for all $m > N_\varepsilon$,
$$\beta_{\varepsilon_1} \sqrt{ \frac{m}{\log m} } < \log M_m < \beta_{\varepsilon_2} \sqrt{ \frac{m}{\log m} }.$$
\end{lemma}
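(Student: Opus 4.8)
The plan is to estimate $M_m = \mathbb{E}[\mathrm{ord}(\sigma)]$ for $\sigma$ uniform in $S_m$ by exploiting the multiplicative structure of the order. Writing $c_\ell = c_\ell(\sigma)$ for the number of $\ell$-cycles of $\sigma$ and $v_p$ for the $p$-adic valuation, the order factors as $\mathrm{ord}(\sigma) = \prod_p p^{b_p}$, where $b_p = \max\{v_p(\ell) : c_\ell \geq 1\}$. Hence $\log M_m = \log \mathbb{E}\big[\prod_p p^{b_p}\big]$, and the whole problem is to understand the joint law of the exponents $b_p$ and, in particular, to justify treating them as approximately independent so that $\log M_m \approx \sum_p \log \mathbb{E}[p^{b_p}]$.

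First I would pass from the uniform model on $S_m$ to the Shepp--Lloyd/Arratia--Barbour--Tavar\'e model, in which the cycle counts $c_\ell$ are realized as independent Poisson variables $Z_\ell$ of mean $1/\ell$ conditioned on the size constraint $\sum_\ell \ell Z_\ell = m$. In this model, for $p^a \leq m$ the event $\{b_p \geq a\}$ is the event that some cycle length divisible by $p^a$ is present, whose unconditioned probability is $1 - \exp(-\Lambda_{p,a})$ with $\Lambda_{p,a} = \sum_{\ell \le m,\ p^a\mid\ell} 1/\ell = \frac{1}{p^a}(\log(m/p^a) + O(1))$. Summation by parts gives $\mathbb{E}[p^{b_p}] = 1 + (1-\tfrac1p)\sum_{a\ge1} p^a\,\mathbb{P}(b_p \ge a)$, which I would estimate in the dominant regime where $p^a$ is large and $\Lambda_{p,a}$ is small, so that $\mathbb{P}(b_p \geq a) \approx \Lambda_{p,a}$.

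The heart of the argument is the summation $\sum_p \log \mathbb{E}[p^{b_p}]$ over primes, carried out with the Prime Number Theorem (or Chebyshev-type bounds) and then converted into an integral. The crucial point, and the main obstacle, is the size constraint $\sum_\ell \ell c_\ell = m$: large cycles are expensive in the fixed budget of $m$ points, so the independence heuristic is only valid after the constraint is accounted for, and it is precisely this budget that pins the dominant prime powers into a window around $p^a \asymp \sqrt{m}\,\mathrm{polylog}(m)$ and produces the scale $\sqrt{m/\log m}$ rather than a spuriously larger order. After a substitution (essentially $t = \Lambda_{p,a}$, so that $1 - e^{-\Lambda_{p,a}}$ becomes $1 - e^{-t}$) the per-prime contribution $\log\big(1 + \log\frac{1}{1-e^{-t}}\big) = \log\log\frac{e}{1-e^{-t}}$ appears, and integrating against the prime density yields exactly $\beta_0 = \sqrt{8I}$ with $I$ as in (\ref{I_definition_constant}).

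I expect the decisive difficulties to be two. First, making the near-independence of the $b_p$ rigorous while simultaneously respecting the constraint: this requires a Poissonization estimate (via the Feller coupling, or a local limit theorem for $\sum_\ell \ell Z_\ell$) whose error is small enough not to disturb the constant $\beta_0$, together with a separate treatment of the tail of very large cycles and of higher prime powers, which feed into the error term $O(\sqrt m\,\log\log m/\log m)$ but not into the main term. Second, one must carry out the upper and lower bounds separately---restricting to a well-chosen family of cycle types for the lower bound, and bounding $\mathrm{ord}(\sigma)$ by the product of the distinct present cycle lengths for the upper bound---and verify that both are governed by the same integral $I$. Once the asymptotic $\log M_m = \beta_0\sqrt{m/\log m} + O(\sqrt m\,\log\log m/\log m)$ is established, the ``in particular'' clause is immediate: since the error term is $o(\sqrt{m/\log m})$, for any $\varepsilon_1 \in (-1,0)$ and $\varepsilon_2\in(0,1)$ the stated two-sided bound with $\beta_{\varepsilon_1}$ and $\beta_{\varepsilon_2}$ holds for all sufficiently large $m$.
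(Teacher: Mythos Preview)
The paper does not prove this lemma at all: it is stated with attribution to Stong~\cite{Stong1998} and used as a black box. So there is no ``paper's own proof'' to compare against; you are attempting to reprove a result that the authors simply import from the literature.

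Your sketch is a plausible probabilistic outline (Poissonization of cycle counts, prime-by-prime decomposition of the order, passage to an integral that produces $I$), and you correctly identify the genuine obstacles: justifying the near-independence of the exponents $b_p$ under the size constraint $\sum_\ell \ell c_\ell = m$, and obtaining matching upper and lower bounds with the same constant. However, the actual proofs in the literature (Goh--Schmutz and then Stong's refinement) proceed analytically, via the exponential generating function $\sum_m M_m z^m/m! = \prod_\ell (1-z^\ell)^{-1}\big/\prod_\ell \exp(z^\ell/\ell)$-type identities and a saddle-point analysis, rather than through the coupling/Poisson route you describe. Your approach is not wrong in spirit, but turning it into a proof with the stated error term $O(\sqrt{m}\,\log\log m/\log m)$ would require substantial work that you have only gestured at; for the purposes of this paper, citing Stong is the appropriate move, and the ``in particular'' clause is, as you note, an immediate consequence of the cited asymptotic.
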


It is clear from Equation (\ref{expected_value_of_T}) that, if $m_*$ is the integer that maximizes $\mathbb{P}_n^{ \{0,k\} } ( \Z = m ) M_m$ for $1 \leq m \leq n$ and $m_0$ is an integer in $(1,n)$, then
$$\mathbb{P}_n^{\{ 0,k \}} ( \Z = m_0 ) M_{m_0} \leq \mathbb{E}_n^{\{ 0,k \}} ( \T ) \leq n \mathbb{P}_n^{\{ 0,k \}} ( \Z = m_* ) M_{m_*}.$$
Let $n \geq 1$ and $\varepsilon \in (-1,1)$. 
We extend the factorials in the expression for $\mathbb{P}_n^{\{ 0,k \}} ( \Z = m_* )$ in Theorem \ref{theorem_distribution_Z_0,k} using the Gamma function, as in Equation (\ref{distribution_Z_gamma_function}). Also, we bound the quantity $M_m$ for large $m$ as described in Lemma \ref{lemma_M_m}. For $\beta_\varepsilon = \beta_0 + \varepsilon$, let 
\begin{equation}
\phi_{n, \varepsilon} (x) = \lambda x k^{x - 1} \frac{ \Gamma(r) }{ \Gamma(r - x + 1) } \frac{ \Gamma(n - x) }{ \Gamma(n) } 
\exp \left( \beta_\varepsilon \sqrt{ \frac{x}{\log x} } \right).
\label{phi_n}
\end{equation}
The calculation of the maximum value that the real function $\phi_{n, \varepsilon}(x)$ assumes for $x \in (1,n)$ is a main ingredient in the proof of the asymptotic estimate on $\mathbb{E}_n^{\{ 0,k \}} ( \T )$.
In order to simplify the calculations that follow, we consider the function $\Phi_{n, \varepsilon}(x) = \log \phi_{n, \varepsilon}(x)$ and note that $x_*$ is a local maximum of $\phi_{n, \varepsilon}(x)$ if and only if it is a local maximum of $\Phi_{n, \varepsilon}(x)$. 

\begin{proposition} \label{propostion_maximizing_phi}
Let $n = kr$, $\lambda = k-1 \geq 1$ and $\varepsilon \in (-1, 1)$.
If, for some $0 < \alpha < 1$, $k = o(n^{1 - \alpha})$ as $r$ approaches infinity, then there exists a constant $c > 0$ such that, for sufficiently large $n$, the function $x \longmapsto \phi_{n, \varepsilon} (x)$ assumes a unique maximum $x_*$ for $x \in (c, r)$. Moreover, if $k_\varepsilon = \sqrt[3]{ 3^5 \beta_\varepsilon^4 }/8$, then, as $r$ approaches infinity,
$$\log \phi_{n, \varepsilon} (x_*) = 
k_\varepsilon \frac{ (n/\lambda)^{1/3} }{ \log^{2/3} (n/\lambda) } (1 + o(1)).$$
\end{proposition}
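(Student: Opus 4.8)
The plan is to pass to $\Phi_{n,\varepsilon}=\log\phi_{n,\varepsilon}$ and reduce it to a transparent form before optimizing. Taking logarithms in (\ref{phi_n}) gives
\[
\Phi_{n,\varepsilon}(x)=\log\lambda+\log x+(x-1)\log k+\bigl(\log\Gamma(r)-\log\Gamma(r-x+1)\bigr)+\bigl(\log\Gamma(n-x)-\log\Gamma(n)\bigr)+\beta_\varepsilon\sqrt{\tfrac{x}{\log x}}.
\]
Applying parts (iii) and (iv) of Lemma \ref{lemma_equatios_615_616} together with $n=kr$, the linear-in-$x$ logarithmic terms $(x-1)\log k+x\log r-x\log n$ collapse to $-\log k$, while the two quadratic contributions combine as $-\frac{x^2}{2r}+\frac{x^2}{2n}=-\frac{\lambda x^2}{2n}$. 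This yields
\[
\Phi_{n,\varepsilon}(x)=\log\Bigl(\frac{\lambda x}{n}\Bigr)-\frac{\lambda x^2}{2n}+\beta_\varepsilon\sqrt{\frac{x}{\log x}}+O\Bigl(\frac{x^3}{r^2}\Bigr)+o(1),
\]
so that, apart from the slowly varying $\log(\lambda x/n)$ and the error, $\Phi_{n,\varepsilon}$ is driven by the competition between the decreasing term $-\lambda x^2/(2n)$ and the increasing term $\beta_\varepsilon\sqrt{x/\log x}$.

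For existence and uniqueness I would differentiate. Writing $g(x)=\sqrt{x/\log x}$, one has $g'(x)=\frac{1}{2\sqrt{x\log x}}\bigl(1-\frac{1}{\log x}\bigr)$, so the stationarity condition $\Phi_{n,\varepsilon}'(x)=0$ becomes
\[
\frac{\lambda x}{n}=\frac{1}{x}+\beta_\varepsilon g'(x)+O\Bigl(\frac{x^2}{r^2}\Bigr).
\]
On the range $x\in(c,r)$ for a suitable constant $c$, the left-hand side is strictly increasing while $\frac1x+\beta_\varepsilon g'(x)$ is strictly decreasing (and the error is of lower order), so the two sides cross exactly once; this is the unique maximizer $x_*$. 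One checks directly that $\Phi_{n,\varepsilon}'>0$ just above $c$ (there $\lambda x/n\to0$ but $\frac1x+\beta_\varepsilon g'(x)$ is a positive constant) and $\Phi_{n,\varepsilon}'<0$ near $r$ (there $\lambda x/n\to\lambda/k$ while the right side tends to $0$), so the crossing is an interior maximum.

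To locate $x_*$ I would discard the lower-order terms and solve $\frac{\lambda x}{n}\sim\frac{\beta_\varepsilon}{2\sqrt{x\log x}}$, i.e. $x^{3/2}\sqrt{\log x}\sim\frac{\beta_\varepsilon}{2}\cdot\frac{n}{\lambda}$. Taking logarithms bootstraps $\log x_*\sim\tfrac23\log(n/\lambda)$, and substituting this back gives $x_*=\frac{3^{1/3}\beta_\varepsilon^{2/3}}{2}\,\frac{(n/\lambda)^{2/3}}{\log^{1/3}(n/\lambda)}(1+o(1))$. Feeding this into $\Phi_{n,\varepsilon}$, the two dominant terms are $-\frac{\lambda x_*^2}{2n}\sim-\frac{3^{2/3}\beta_\varepsilon^{4/3}}{8}\frac{(n/\lambda)^{1/3}}{\log^{2/3}(n/\lambda)}$ and $\beta_\varepsilon\sqrt{x_*/\log x_*}\sim\frac{3^{2/3}\beta_\varepsilon^{4/3}}{2}\frac{(n/\lambda)^{1/3}}{\log^{2/3}(n/\lambda)}$, whose sum carries the constant $\bigl(-\tfrac18+\tfrac12\bigr)3^{2/3}\beta_\varepsilon^{4/3}=\frac{3^{5/3}\beta_\varepsilon^{4/3}}{8}=k_\varepsilon$, matching the claim since $k_\varepsilon=\sqrt[3]{3^5\beta_\varepsilon^4}/8$.

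The main obstacle is controlling the errors finely enough to retain a $(1+o(1))$ factor in the maximum value. Two points need care. First, one must verify $O(x_*^3/r^2)=o\bigl((n/\lambda)^{1/3}/\log^{2/3}(n/\lambda)\bigr)$; here the hypothesis $k=o(n^{1-\alpha})$, equivalently $\log n=O(\log(n/\lambda))$, is essential, as it forces $n/\lambda\to\infty$ and, through $r\ge n/(2\lambda)$, both makes $x_*^3/r^2\to0$ and keeps $\log x_*\asymp\log(n/\lambda)$, so the bootstrap for $\log x_*$ is legitimate. Second, since $x_*$ is known only to leading order, I would note that $\Phi_{n,\varepsilon}(x_*)$ is insensitive to the first-order uncertainty in $x_*$: because $\Phi_{n,\varepsilon}'(x_*)=0$, a Taylor expansion about the explicit approximant $x_0$ gives $\Phi_{n,\varepsilon}(x_*)-\Phi_{n,\varepsilon}(x_0)=O\bigl(\Phi_{n,\varepsilon}''(x_*)(x_*-x_0)^2\bigr)$, which one checks is $o\bigl((n/\lambda)^{1/3}/\log^{2/3}(n/\lambda)\bigr)$. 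The residual term $\log(\lambda x_*/n)=O(\log(n/\lambda))$ is then trivially negligible against the main order.
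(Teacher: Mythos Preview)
Your heuristic for the location of $x_*$ and the evaluation of $\Phi_{n,\varepsilon}(x_*)$ coincide with the paper's, including the bootstrap $\log x_*\sim\tfrac23\log(n/\lambda)$ and the final constant computation $(-\tfrac18+\tfrac12)\,3^{2/3}\beta_\varepsilon^{4/3}=k_\varepsilon$.

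The gap is in your existence/uniqueness argument. You first expand $\Phi_{n,\varepsilon}$ via Lemma~\ref{lemma_equatios_615_616}(iii)--(iv) and then differentiate the expansion, treating the derivative of the $O(x^3/r^2)$ remainder as $O(x^2/r^2)$ and ``of lower order''. Two problems: asymptotic remainders cannot in general be differentiated, and even if you instead differentiate the exact $\Phi_{n,\varepsilon}$ and apply Lemma~\ref{lemma_equatios_615_616}(i)--(ii) to the digamma terms, those estimates require $x=o(r)$. Near $x=r$ the purported error in your $\Phi'$-equation is $O(1)$, not lower order, so neither the ``single crossing'' claim nor the sign check ``$\Phi'<0$ near $r$'' is justified on the full interval $(c,r)$. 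The paper sidesteps this entirely by working with the exact derivative
\[
\Phi_{n,\varepsilon}''(x)=-\tfrac{1}{x^2}-\Psi'(r-x+1)+\Psi'(n-x)-\tfrac{\beta_\varepsilon}{4}(x^3\log x)^{-1/2}\bigl(1-\tfrac{3}{\log^2 x}\bigr),
\]
and using the series $\Psi'(y)=\sum_{j\ge 0}(y+j)^{-2}$ to see that $\Psi'$ is strictly decreasing, hence $-\Psi'(r-x+1)+\Psi'(n-x)\le 0$ for every $x\in(c,r)$; this yields $\Phi_{n,\varepsilon}''<0$ once $x>e^{\sqrt 3}$, giving strict concavity and a unique maximum without any asymptotics. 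The paper then pins down $x_*$ rigorously by evaluating the exact $\Phi_{n,\varepsilon}'$ (again via Lemma~\ref{lemma_equatios_615_616}(i)--(ii), now legitimately since $t_*=o(r)$) at $t_*(1\pm\delta_n)$ with an explicit $\delta_n=(\log\log n)^2/\log n$. Your Taylor-expansion remark that $\Phi(x_*)-\Phi(x_0)=O(\Phi''\cdot(x_*-x_0)^2)$ is a valid alternative to direct substitution once $x_*=x_0(1+o(1))$ has been established, but that localization itself still needs the rigorous bracketing (or an equivalent), not the differentiated expansion.
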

\begin{proof}
Let $\Phi_{n, \varepsilon} = \log \phi_{n, \varepsilon}$. We note that
$$\Phi_{n, \varepsilon}^\prime (x) = 
\frac{1}{x} + \log k + \frac{d}{dx} \log \left( \frac{ \Gamma(n - x) }{ \Gamma(r - x + 1) } \right)
+ \frac{ \beta_\varepsilon }{2} \sqrt{ \frac{\log x}{x} } \frac{\log x - 1}{\log^2 x},$$
and hence,
\begin{equation} \label{derivative_Phi}
\Phi_{n, \varepsilon}^\prime (x) = \log k + \frac{1}{x} + \Psi(r - x + 1) - \Psi(n - x) + 
\frac{ \beta_\varepsilon }{2 \sqrt{ x \log x } } \left( 1 - \frac{1}{\log x} \right),
\end{equation}
where $\Psi(z)$ denotes the derivative of $\log \Gamma(z)$.
In order to prove the uniqueness of the maximum of $\Phi_{n, \varepsilon}$, we note that
\begin{equation} \label{second_derivative_Phi}
\Phi_{n, \varepsilon}^{ \prime \prime }(x) = 
- \frac{1}{x^2} - \Psi^\prime (r - x + 1) + \Psi^\prime (n - x) 
- \frac{ \beta_\varepsilon }{4} (x^3 \log x)^{-1/2} 
\left( 1 - \frac{3}{ \log^2 x } \right).
\end{equation}
It follows from Lemma \ref{lemma_gamma_function} that $\Psi^\prime(y)$ is monotone decreasing for $y$ a positive real number, so $n \geq r + 1$ implies $- \Psi^\prime (r - x + 1) + \Psi^\prime (n - x) \leq 0$. We conclude using Equation (\ref{second_derivative_Phi}) that $\Phi_{n, \varepsilon}^{\prime \prime}(x) < 0$ if $1 - 3 \log^{-2} x > 0$; this condition holds if $x > c$, where $c = \exp(\sqrt{3})$.

We note that Equation (\ref{derivative_Phi}) implies that $\Phi_{n, \varepsilon}^\prime(x) = 0$ if and only if
$$\log k + \frac{1}{x} + \Psi(r - x + 1) - \Psi(n - x) 
+ \frac{ \beta_\varepsilon }{2 \sqrt{ x \log x } } \left( 1 - \frac{1}{\log x} \right) = 0.$$
We proceed heuristically in order to obtain an intuition for the asymptotic behaviour of the point $x_* \in (1,n)$ that maximizes $\Phi_{n, \varepsilon}(x)$. 
By Lemma \ref{lemma_equatios_615_616}, for $x = o(r)$ we have
\begin{equation} \label{Psi_difference_for_heuristic}
\Psi(r - x + 1) - \Psi(n-x) \sim - \log k - \frac{(k-1) x}{n}.
\end{equation}
Assume that the estimate (\ref{Psi_difference_for_heuristic}) holds as an equality; since $\lambda = k-1$, the equation $\Phi_{n, \varepsilon}^\prime(x) = 0$ is equivalent to
$$\frac{1}{x} - \frac{\lambda x}{n} + \frac{ \beta_\varepsilon }{2 \sqrt{ x \log x } } \left( 1 - \frac{1}{\log x} \right) = 0,$$
and multiplying this equation by $x$ we obtain
$$\frac{ \beta_\varepsilon }{2} \left( \frac{x}{ \log x } \right)^{1/2} \left( 1 - \frac{1}{\log x} + 
\frac{2}{ \beta_\varepsilon } \left( \frac{ \log x }{x} \right)^{1/2} \right) = \frac{\lambda x^2}{n}.$$
This is equivalent to
$$( x^3 \log x )^{1/2} = \frac{ \beta_\varepsilon }{2} \frac{n}{\lambda} 
\left( 1 - \frac{1}{\log x} + \frac{2}{ \beta_\varepsilon } \left( \frac{ \log x }{x} \right)^{1/2} \right).$$
If the function $\Phi_{n, \varepsilon}(x)$ assumes indeed a unique maximum $x_*$ in $(c,r)$, $c = \exp(\sqrt{3})$, and $x_*$ approaches infinity when so does $n$, we expect to have
$$( x^3_* \log x_* )^{1/2} = \frac{ \beta_\varepsilon }{2} \frac{n}{ \lambda } (1 + o(1)),$$
that is,
\begin{equation} \label{maximum_psi_heuristic}
x^3_* \log x_* = \frac{ \beta_\varepsilon^2 }{4} \left( \frac{n}{ \lambda } \right)^2 (1 + o(1)).
\end{equation}
We use bootstrapping to obtain an approximation for the solution of
Equation (\ref{maximum_psi_heuristic}); see Section 4.1.2 of \cite{GreeneKnuth2007AofA}.
If not for the term $\log x_*$ in Equation (\ref{maximum_psi_heuristic}), the solution would present asymptotic behavior $x_* \sim c_1 (n/\lambda)^{2/3}$ for some real number $c_1 > 0$, and thus $\log x_* \sim \frac{2}{3} \log (n/\lambda)$ as $r$ approaches infinity. Hence,
$$x^3_* \frac{2}{3} \log (n/\lambda) = \frac{ \beta_\varepsilon^2 }{4} \left( \frac{n}{ \lambda } \right)^2 (1 + o(1)),$$
that is,
$$x^3_* = \frac{ 3 \beta_\varepsilon^2 }{8} 
\frac{ (n/\lambda)^2 }{ \log (n/\lambda) } (1 + o(1)).$$
Therefore,
\begin{equation} \label{x_tilde}
x_* = \sqrt[3]{ \frac{3 \beta_\varepsilon^2 }{8} } \frac{ (n/\lambda)^{2/3} }{ \log^{1/3} (n/\lambda) }
(1 + o(1)).
\end{equation}
We prove now what was obtained heuristically in Equation (\ref{x_tilde}). We define 
\begin{equation} \label{t_tilde}
t_* = \sqrt[3]{ \frac{3 \beta_\varepsilon^2}{8} } \frac{ (n/\lambda)^{2/3} }{ \log^{1/3} (n/\lambda) }
\end{equation}
and consider, for some $\delta_n = o(1)$ to be determined, the interval $[a,b]$ where $a = a(n,\varepsilon)$, $b = b(n,\varepsilon)$ are defined by 
$$a = t_* (1 - \delta_n) \quad \mbox{and} \quad b = t_* (1 + \delta_n).$$ 
We prove using Equation (\ref{derivative_Phi}) that
\begin{equation} \label{proving_heuristic}
\Phi_{n, \varepsilon}^\prime(a) > 0 \quad \mbox{and} \quad \Phi_{n, \varepsilon}^\prime(b) < 0.
\end{equation}
Equation (\ref{proving_heuristic}) implies $x_* = t_* \big( 1 + O( \delta_n ) \big)$, as desired. We prove Equation (\ref{proving_heuristic}) using Equation (\ref{derivative_Phi}), where the last term in the expression for $\Phi_{n, \varepsilon}^\prime (b)$ is given by
$$\frac{ \beta_\varepsilon }{2} \left( \frac{1}{ b \log b } \right)^{1/2} =
\frac{ \beta_\varepsilon }{2} \left( \frac{1}{ {t_*} (1 + \delta_n) } \right)^{1/2}
\left( \frac{2}{3} \log \left( \frac{n}{\lambda} \right) + O \big( \log \log n \big) \right)^{-1/2},$$
that is,
$$\frac{ \beta_\varepsilon }{2}  \left( \frac{1}{ b \log b } \right)^{1/2} =
\frac{ \beta_\varepsilon }{2} t^{-1/2}_* \left( \frac{ 3/2 }{ (1 + \delta_n) \log (n/\lambda) } \right)^{1/2} 
\left( 1 + O \left( \frac{ \log \log n }{ \log n } \right) \right).$$
Hence, using Equation (\ref{t_tilde}),
\begin{equation} \label{confirming_heuristic_1}
\frac{ \beta_\varepsilon }{2}  \left( \frac{1}{ b \log b } \right)^{1/2} =
\sqrt[3]{ \frac{3 \lambda \beta_\varepsilon ^2 }{8 n \log (n/\lambda)} } (1 + \delta_n)^{-1/2} 
\left( 1 + O \left( \frac{ \log \log n }{ \log n } \right) \right). 
\end{equation}
Since $b = o(r)$, it follows from Lemma \ref{lemma_equatios_615_616} that
\begin{equation} \label{confirming_heuristic_2_lead_up_to}
\log k + \Psi (r - b + 1) - \Psi (n - b) = - \frac{ \lambda {t_*} }{n} (1 + \delta_n) + O \left( \frac{ t^2_* }{r^2} \right).
\end{equation}
Equations (\ref{derivative_Phi}), (\ref{confirming_heuristic_1}) and (\ref{confirming_heuristic_2_lead_up_to}) together with $\frac{1}{b}=O\left(\frac{\log\log n}{\log n}\right)$ imply
\begin{align*}
\Phi_{n, \varepsilon}^\prime (b) 
& = 
\frac{1}{b} + O\left(\frac{t_{*}^{2}}{r^2}\right)
- \frac{ \lambda {t_*}}{n} (1 + \delta_n) \\
& + 
\sqrt[3]{ \frac{3 \lambda \beta_\varepsilon ^2 }{8 n \log (n/\lambda)} } (1 + \delta_n)^{-1/2} 
\left( 1 + O \left( \frac{ \log \log n }{ \log n } \right) \right).
\end{align*}
Using $(1 + \delta_n)^{-1/2} = 1 - \frac{1}{2} \delta_n + O( \delta_n^2 )$, we obtain
\begin{align*}
\Phi_{n, \varepsilon}^\prime (b) 
& = \frac{1}{b} + O\left(\frac{t_{*}^{2}}{r^2}\right)
- \frac{ \lambda {t_*}}{n} (1 + \delta_n) \\
& + 
\sqrt[3]{ \frac{3 \lambda \beta_\varepsilon ^2 }{8 n \log (n/\lambda)} }
\left( 1 - \frac{\delta_n}{2} + O(\delta_n^2)+ O \left( \frac{ \log \log n }{ \log n } \right) \right).
\end{align*}

We note that 
\begin{align*}
\frac{1}{b} + O\left(\frac{t_{*}^{2}}{r^2}\right)
&=\frac{1}{b} (1+o(1)) = \frac{1}{t_{*}} (1+o(1))=\frac{\lambda t_{*}}{n} \cdot \frac{n}{\lambda t_{*}^2} (1+o(1)) \\
& 
=\frac{\lambda t_{*}}{n} \cdot o(n^{-\alpha/3}\log^{2/3} n).
\end{align*}

Since $\lambda t_*/n = \sqrt[3]{ 3 \lambda \beta_\varepsilon ^2 / 8n \log (n/\lambda) }$ we conclude that 
\begin{align*}
  & \Phi_{n, \varepsilon}^\prime (b) \\
= & \sqrt[3]{ \frac{3 \lambda \beta_\varepsilon ^2 }{8 n \log (n/\lambda)} }
\left( - \frac{3}{2} \delta_n + O( \delta_n^2 )+o(n^{-\alpha/3}\log^{2/3} n) + O \left( \frac{ \log \log n }{ \log n } \right) \right)\\
= & \sqrt[3]{ \frac{3 \lambda \beta_\varepsilon ^2 }{8 n \log (n/\lambda)} }
\left( - \frac{3}{2} \delta_n + O( \delta_n^2 ) + O \left( \frac{ \log \log n }{ \log n } \right) \right).
\end{align*}

We recall that $\delta_n$ is a quantity to be determined satisfying $\delta_n = o(1)$. It is of our interest to write 
$$\Phi_{n, \varepsilon}^\prime (b) = 
\sqrt[3]{ \frac{3 \lambda \beta_\varepsilon ^2 }{8 n \log (n/\lambda)} } \left( - \frac{3}{2} \delta_n + o( \delta_n ) \right),$$
as this would allow us to determine if $\Phi_{n, \varepsilon}^\prime (b)$ is positive or negative, depending on the value of $\delta_n$. To this end we choose $\delta_n = (\log \log n)^2 / \log n$ and conclude that $\Phi_{n, \varepsilon}^\prime (b) < 0$ for sufficiently large $n$. One proves similarly that $\Phi_{n, \varepsilon}^\prime (a) > 0$, so Equation (\ref{proving_heuristic}) holds indeed; this proves Equation (\ref{x_tilde}).

We estimate now the value of $\Phi_{n, \varepsilon} ( x_* ) = \log \phi_{n, \varepsilon} ( x_* )$. We have from Equation (\ref{phi_n}) that
\begin{align*}
\Phi_{n, \varepsilon} ( x_* ) 
= & 
x_* \log k + \log \Gamma(r) - \log \Gamma(r - x_* + 1) + \log \Gamma(n - x_*) - \log \Gamma(n) \\
& + \beta_\varepsilon \sqrt{ \frac{ x_* }{ \log x_* } } + O(\log n).
\end{align*}
Since $x_* \log k + x_* \log r - x_* \log n = 0$,  Lemma \ref{lemma_equatios_615_616} implies 
\begin{equation} \label{value_of_max_1}
\Phi_{n, \varepsilon} ( x_* ) = - \frac{ \lambda x^2_* }{ 2n } +
\beta_\varepsilon \sqrt{ \frac{ x_* }{ \log x_* } } + O( \log n ).
\end{equation}
We note that Equation (\ref{x_tilde}) implies
\begin{equation} \label{value_of_max_2}
\frac{ \lambda x^2_* }{ 2n } =
\frac{1}{2} 
\left( \sqrt[3]{ \frac{3 \beta_\varepsilon^2}{8} } \right)^2 
\frac{ (n/\lambda)^{1/3} }{ \log^{2/3} (n/\lambda) } \big( 1 + o(1) \big)
\end{equation}
and
$$\beta_\varepsilon \sqrt{ \frac{ x_* }{ \log x_* } } \sim
\beta_\varepsilon \left( \sqrt[3]{ \frac{3 \beta_\varepsilon^2}{8} } \right)^{1/2} 
\frac{ (n/\lambda)^{1/3} }{ \log^{1/6} (n/\lambda) } 
\left( \frac{2}{3} \log (n/\lambda) + O( \log \log n ) \right)^{-1/2},$$
that is,
\begin{equation} \label{value_of_max_3}
\beta_\varepsilon \sqrt{ \frac{ x_* }{ \log x_* } } =
\beta_\varepsilon \left( \frac{3}{2} \sqrt[3]{ \frac{3 \beta_\varepsilon^2}{8} } \right)^{1/2} 
\left( \frac{n}{ \lambda } \right)^{1/3} \frac{1}{ \log^{2/3} (n/\lambda) } \big( 1 + o(1) \big).
\end{equation}
Hence, using Equations (\ref{value_of_max_1}), (\ref{value_of_max_2}) and (\ref{value_of_max_3}) and the fact that $k = o(n^{1 - \alpha})$, we obtain
$$\Phi_{n, \varepsilon} ( x_* ) = 
k_\varepsilon \frac{ (n/\lambda)^{1/3} }{ \log^{2/3} (n/\lambda) } \big( 1 + o(1) \big),$$
where, as desired,
\begin{align*}
k_\varepsilon
& =
- \frac{1}{2} \left( \sqrt[3]{ \frac{3 \beta_\varepsilon^2}{8} } \right)^2 + 
\beta_\varepsilon \left( \frac{3}{2} \sqrt[3]{ \frac{3 \beta_\varepsilon^2}{8} } \right)^{1/2} 
=
\left( - \frac{1}{8} + \frac{1}{2} \right) \sqrt[3]{ 3^2 \beta_\varepsilon^4 } =
\frac{ \sqrt[3]{ 3^5 \beta_\varepsilon^4 } }{8}.
\end{align*}
\hfill
\end{proof}

\begin{lemma} \label{lemma_m_tilde_approaches_infinity}
Let $m_* = m_*(n)$ be the integer that maximizes $\mathbb{P}_n^{\{ 0,k \}} ( \Z = m ) M_m$ for $1 \leq m \leq n$. Let $\varepsilon > 0$ and let $x_*$ be as in Proposition \ref{propostion_maximizing_phi}. Then $m_*$ approaches infinity when so does $n$ and, if $k = o(n^{1 - \alpha})$ for some $0 < \alpha < 1$,
$$\max_{1 \leq m \leq n} \mathbb{P}_n^{\{ 0,k \}} ( \Z = m ) M_m \leq \phi_{n,\varepsilon}( x_* ).$$
\end{lemma}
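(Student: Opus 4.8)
The plan is to prove the two assertions in sequence: first that $m_*\to\infty$, and then, using that fact, the displayed bound. The order matters because the upper bound $\log M_m<\beta_\varepsilon\sqrt{m/\log m}$ supplied by Lemma \ref{lemma_M_m} is only valid for $m>N_\varepsilon$; once we know the maximizing index is eventually large, the finitely many small values of $m$ cannot interfere. The basic identity I will exploit repeatedly is that, for every integer $m\in[1,r]$, Equation (\ref{distribution_Z_gamma_function}) gives $\phi_{n,\varepsilon}(m)=\mathbb{P}_n^{\{0,k\}}(\Z=m)\exp\!\big(\beta_\varepsilon\sqrt{m/\log m}\big)$, so the quantity $\phi_{n,\varepsilon}$ is literally the point probability weighted by the $M_m$-surrogate.

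To show $m_*\to\infty$, I would compare the behaviour of the product $\mathbb{P}_n^{\{0,k\}}(\Z=m)M_m$ at bounded indices versus at a growing index. On one hand, for each fixed $m$, Theorem \ref{theorem_distribution_Z_0,k} (or the leading factor in Lemma \ref{lemma_asymptotic_distribution_Z_0,k}) gives $\mathbb{P}_n^{\{0,k\}}(\Z=m)\sim\lambda m/n$, so for any fixed bound $B$ one has $\max_{1\le m\le B}\mathbb{P}_n^{\{0,k\}}(\Z=m)M_m=O(\lambda/n)=O(1/r)\to 0$. On the other hand, fix an auxiliary $\varepsilon_1\in(-1,0)$; for integer $m>N_{\varepsilon_1}$ the lower bound of Lemma \ref{lemma_M_m} yields $\mathbb{P}_n^{\{0,k\}}(\Z=m)M_m>\phi_{n,\varepsilon_1}(m)$. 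Evaluating at the integer nearest the maximizer of $\phi_{n,\varepsilon_1}$ (rounding costs only a negligible factor, by continuity and the floor equivalence in Lemma \ref{lemma_asymptotic_distribution_Z_0,k}) and invoking Proposition \ref{propostion_maximizing_phi} with $\varepsilon_1$ shows that the overall maximum is at least of order $\exp\!\big(k_{\varepsilon_1}(n/\lambda)^{1/3}\log^{-2/3}(n/\lambda)\big)\to\infty$. Since the global maximum diverges while the maximum over indices $\le B$ vanishes, for all large $n$ the maximizer satisfies $m_*>B$; as $B$ was arbitrary, $m_*\to\infty$.

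With $m_*\to\infty$ established, the bound follows quickly. For $n$ large enough we have $m_*>\max(N_\varepsilon,c)$, where $c$ is the constant from Proposition \ref{propostion_maximizing_phi}, and since $\mathbb{P}_n^{\{0,k\}}(\Z=m)=0$ for $m>r$ we also have $m_*\le r$. Applying the upper bound of Lemma \ref{lemma_M_m} at $m_*$ gives
$$\max_{1 \leq m \leq n} \mathbb{P}_n^{\{0,k\}}(\Z = m)\, M_m = \mathbb{P}_n^{\{0,k\}}(\Z = m_*)\, M_{m_*} < \phi_{n,\varepsilon}(m_*) \leq \phi_{n,\varepsilon}(x_*).$$
The last inequality uses Proposition \ref{propostion_maximizing_phi}: its proof shows $\Phi_{n,\varepsilon}=\log\phi_{n,\varepsilon}$ is concave on $(c,r)$ with interior maximizer $x_*=o(r)$ (since $t_*$ of Equation (\ref{t_tilde}) is $\Theta(r^{2/3}\log^{-1/3}r)=o(r)$), so $\phi_{n,\varepsilon}(x)\le\phi_{n,\varepsilon}(x_*)$ throughout $(c,r]$; in particular the potential boundary value $m_*=r$ is covered with no special treatment.

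The main obstacle is the first step, showing $m_*\to\infty$. Because Lemma \ref{lemma_M_m} describes $M_m$ only asymptotically, one cannot directly order the product across all indices at once, and one must separately certify that no bounded index can be the maximizer. The quantitative heart of the matter is that the super-polynomial growth of $M_m$ along a well-chosen increasing sequence of indices overwhelms the polynomial decay $\lambda m/n$ of the point probabilities; this is precisely the content of Proposition \ref{propostion_maximizing_phi} applied with a negative parameter $\varepsilon_1$, and it is what makes the comparison go through.
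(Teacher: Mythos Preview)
Your proof is correct and follows essentially the same strategy as the paper: establish $m_*\to\infty$ by showing the product at bounded indices vanishes while the overall maximum diverges, then use the upper bound of Lemma~\ref{lemma_M_m} together with the maximality of $x_*$ from Proposition~\ref{propostion_maximizing_phi}. The only difference is in the divergence witness: the paper picks the explicit test value $m=\lfloor (n/\lambda)^{1/2}\rfloor$ and computes directly from Lemmas~\ref{lemma_asymptotic_distribution_Z_0,k} and~\ref{lemma_M_m} that $\mathbb{P}_n^{\{0,k\}}(\Z=m)M_m\to\infty$, whereas you invoke Proposition~\ref{propostion_maximizing_phi} with a negative parameter $\varepsilon_1$; both routes are valid, the paper's being slightly more elementary and yours more systematic (and your treatment of the boundary case $m_*=r$ via concavity is in fact more careful than the paper's one-line appeal to ``the definition of $x_*$'').
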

\begin{proof}
First we prove that the integer $m_*(n)$ approaches infinity when so does $n$.
Assume that there exists $K > 0$ and a subsequence $m_*(n_j)$, $j \geq 1$, such that $m_*(n_j) \leq K$ for all $j \geq 1$. It follows that $\mathbb{P}_{n_j}^{ \{0,k\} } ( \Z = m ) M_m$ is bounded for $j \geq 1$. However, it follows from Lemmas \ref{lemma_asymptotic_distribution_Z_0,k} and \ref{lemma_M_m} that, for $m = \left\lfloor (n/\lambda)^{1/2} \right\rfloor$, 
\begin{align*}
\mathbb{P}_n^{ \{0,k\} } ( \Z = m ) M_m 
& = 
(n/\lambda)^{-1/2} \exp \left( -\frac{1}{2} + o(1) + 
\frac{ \beta_0 (n/\lambda)^{1/4} }{ \frac{1}{2} \log^{1/2} (n/\lambda) } (1 + o(1)) \right), \\
& =
\exp \left( - \frac{1}{2} \log (n/\lambda) +
\beta_0 \frac{ (n/\lambda)^{1/4} }{ \frac{1}{2} \log^{1/2} (n/\lambda) } (1 + o(1)) \right),
\end{align*}
and this quantity approaches infinity when so does $n$.
This contradicts the fact that $\mathbb{P}_{n_j}^{ \{0,k\} } ( \Z = m ) M_m$ is bounded for $j \geq 1$, so we have indeed that $m_*(n) \longrightarrow \infty$ as $n \longrightarrow \infty$. 

As a consequence of the first part of the lemma, that we just proved, we have $m_* > N$ for any fixed integer $N$. Thus Lemma \ref{lemma_M_m} and Equation (\ref{phi_n}) imply that, if $\varepsilon > 0$, then $\mathbb{P}_n^{\{ 0,k \}} ( \Z = m_* ) M_{m_*} \leq \phi_{n,\varepsilon}(m_*)$ holds for sufficiently large $n$. The result follows from the definition of $x_*$ in Proposition \ref{propostion_maximizing_phi}.
\hfill
\end{proof}

\begin{theorem} \label{theorem_T}
Let $k = k(r)$ and $n = n(r)$ be sequences such that $n = kr$ and, for some $0 < \alpha < 1$, $k = o(n^{1 - \alpha})$ as $r$ approaches infinity. 
Let $\mathbb{E}_n^{\{ 0,k \}} (\T)$ be the expected value of $\T$ over the class of mappings on $n$ nodes with indegrees restricted to the set $\{0,k\}$. Then,
$$\log \mathbb{E}_n^{\{ 0,k \}} (\T) = 
k_0 \frac{ (n/\lambda)^{1/3} }{ \log^{2/3} (n/\lambda) } (1 + o(1)),$$
as $r$ approaches infinity, where $\lambda = k-1$, $k_0 = \frac{3}{2} (3I)^{2/3}$ and $I$ is given in Equation (\ref{I_definition_constant}).
\end{theorem}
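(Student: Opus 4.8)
The plan is to sandwich $\mathbb{E}_n^{\{0,k\}}(\T)$ between the two bounds recorded just before Equation (\ref{phi_n}), namely
$$\mathbb{P}_n^{\{0,k\}}(\Z=m_0)M_{m_0} \leq \mathbb{E}_n^{\{0,k\}}(\T) \leq n\,\mathbb{P}_n^{\{0,k\}}(\Z=m_*)M_{m_*},$$
and to evaluate each side through the function $\phi_{n,\varepsilon}$ analysed in Proposition \ref{propostion_maximizing_phi}. The upper side will produce the constant $k_\varepsilon$ for a positive value of $\varepsilon$ and the lower side for a negative value, and a continuity argument in $\varepsilon$ will then pin the constant down to $k_0$.

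For the upper bound I would fix $\varepsilon_2\in(0,1)$ and invoke Lemma \ref{lemma_m_tilde_approaches_infinity}, which gives $\mathbb{P}_n^{\{0,k\}}(\Z=m_*)M_{m_*}\leq\phi_{n,\varepsilon_2}(x_*)$ for all large $n$. Taking logarithms and applying Proposition \ref{propostion_maximizing_phi} yields
$$\log\mathbb{E}_n^{\{0,k\}}(\T)\leq \log n + k_{\varepsilon_2}\frac{(n/\lambda)^{1/3}}{\log^{2/3}(n/\lambda)}(1+o(1)).$$
Here the hypothesis $k=o(n^{1-\alpha})$ is exactly what makes $\log n$ negligible: it forces $n/\lambda\geq n^{\alpha}$ for large $n$, so the main term grows at least like $n^{\alpha/3}/\log^{2/3}n$ and absorbs $\log n$, giving $\log\mathbb{E}_n^{\{0,k\}}(\T)\leq k_{\varepsilon_2}(n/\lambda)^{1/3}/\log^{2/3}(n/\lambda)\,(1+o(1))$.

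For the lower bound I would fix $\varepsilon_1\in(-1,0)$ and take $m_0=\lfloor x_*\rfloor$, where $x_*$ is the maximizer of $\phi_{n,\varepsilon_1}$ furnished by Proposition \ref{propostion_maximizing_phi}. Since $x_*\sim c\,(n/\lambda)^{2/3}/\log^{1/3}(n/\lambda)=o(r)$ and $x_*\to\infty$, the index $m_0$ is legitimate ($1<m_0\leq r=n/k$), so Lemma \ref{lemma_asymptotic_distribution_Z_0,k} applies to $\mathbb{P}_n^{\{0,k\}}(\Z=m_0)$ while Lemma \ref{lemma_M_m} gives $M_{m_0}\geq\exp(\beta_{\varepsilon_1}\sqrt{m_0/\log m_0})$. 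Because the Gamma-function expression defining $\phi_{n,\varepsilon_1}$ agrees with the exact probability at integers, the product is at least $\phi_{n,\varepsilon_1}(\lfloor x_*\rfloor)$; the floor is harmless since $x_*$ is a critical point of $\Phi_{n,\varepsilon_1}=\log\phi_{n,\varepsilon_1}$ with bounded second derivative there, so $\log\phi_{n,\varepsilon_1}(\lfloor x_*\rfloor)=\log\phi_{n,\varepsilon_1}(x_*)(1+o(1))$. Proposition \ref{propostion_maximizing_phi} then delivers $\log\mathbb{E}_n^{\{0,k\}}(\T)\geq k_{\varepsilon_1}(n/\lambda)^{1/3}/\log^{2/3}(n/\lambda)\,(1+o(1))$.

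To finish, I would first send $r\to\infty$ for each fixed $\varepsilon_1,\varepsilon_2$, obtaining
$$k_{\varepsilon_1}\leq \liminf_{r\to\infty}\frac{\log\mathbb{E}_n^{\{0,k\}}(\T)}{(n/\lambda)^{1/3}/\log^{2/3}(n/\lambda)}\leq\limsup_{r\to\infty}\frac{\log\mathbb{E}_n^{\{0,k\}}(\T)}{(n/\lambda)^{1/3}/\log^{2/3}(n/\lambda)}\leq k_{\varepsilon_2},$$
and only then let $\varepsilon_1\to 0^-$ and $\varepsilon_2\to 0^+$. Since $k_\varepsilon=\sqrt[3]{3^5\beta_\varepsilon^4}/8$ with $\beta_\varepsilon=\beta_0+\varepsilon$ is continuous, both $k_{\varepsilon_1}$ and $k_{\varepsilon_2}$ converge to $k_0=\sqrt[3]{3^5\beta_0^4}/8$, so the limit exists and equals $k_0$; by Lemma \ref{lemma_calc} this is precisely the asserted $(1+o(1))$ statement. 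Substituting $\beta_0=\sqrt{8I}$, hence $\beta_0^4=64I^2$, gives $k_0=\frac{4\cdot 3^{5/3}I^{2/3}}{8}=\frac{3^{5/3}I^{2/3}}{2}=\frac{3}{2}(3I)^{2/3}$, as required. The main obstacle is exactly this interchange of limits: the $o(1)$ terms coming from Proposition \ref{propostion_maximizing_phi} depend on $\varepsilon$, so one cannot let $\varepsilon\to 0$ inside a single asymptotic equality, and taking the limit in $r$ \emph{before} the limit in $\varepsilon$ is what keeps the argument rigorous.
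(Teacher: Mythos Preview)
Your proposal is correct and follows essentially the same approach as the paper: the same sandwich inequality, the same use of Proposition \ref{propostion_maximizing_phi} and Lemma \ref{lemma_m_tilde_approaches_infinity} for the two sides, and the same passage from $k_\varepsilon$ to $k_0$ by continuity (which the paper packages as Lemma \ref{lemma_calc}). The only cosmetic differences are that the paper takes $m_0=\lfloor t_*\rfloor$ rather than $\lfloor x_*\rfloor$ and leaves the floor and the $\log n$ absorption implicit, while you spell these out.
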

\begin{proof}
It follows from Equation (\ref{expected_value_of_T}) that, if $1 \leq m_0 \leq n$ and $m_*$ is the integer that maximizes $\mathbb{P}_n^{\{ 0,k \}} ( \Z = m ) M_m$ for $1 \leq m \leq n$, then 
$$\mathbb{P}_n^{\{ 0,k \}} ( \Z = m_0 ) M_{m_0} \leq \mathbb{E}_n^{\{ 0,k \}} ( \T ) \leq n \mathbb{P}_n^{\{ 0,k \}} ( \Z = m_* ) M_{m_*}.$$
Let $\varepsilon \in (-1,0)$.
Since $m_0 = \lfloor t_* \rfloor$ implies $m_0 = t_* \big( 1 + o(1) \big)$, it follows from Lemma \ref{lemma_M_m}, Equation (\ref{distribution_Z_gamma_function}) and Proposition \ref{propostion_maximizing_phi} that
\begin{equation} \label{final_lower_bound_T}
\log \mathbb{E}_n^{\{ 0,k \}} (\T) \geq 
k_\varepsilon \frac{ (n/\lambda)^{1/3} }{ \log^{2/3} (n/\lambda) } (1 + o(1)).
\end{equation}
On the other hand, if $\varepsilon \in (0,1)$, then Lemma \ref{lemma_m_tilde_approaches_infinity} implies that $\mathbb{E}_n^{\{ 0,k \}} ( \T ) \leq n \phi_{n,\varepsilon}(x_*)$ and thus, by Proposition \ref{propostion_maximizing_phi},
\begin{equation} \label{final_upper_bound_T}
\log \mathbb{E}_n^{\{ 0,k \}} ( \T ) \leq 
k_\varepsilon \left( \frac{n}{ \lambda } \right)^{1/3} \frac{1}{ \log^{2/3} n } (1 + o(1)).
\end{equation}

Let $\varepsilon_1 > 0$. Since $k_\varepsilon \to k_0$ as $\varepsilon \to 0$, we have $(1 - \varepsilon_1) k_0 < k_\varepsilon < (1 + \varepsilon_1) k_0$ for sufficiently small $\varepsilon$. The result follows from Equations (\ref{final_lower_bound_T}) and (\ref{final_upper_bound_T}) and Lemma \ref{lemma_calc}.
\hfill
\end{proof}

\begin{corollary} \label{corollary_T_fixed_k}
Let $k \geq 2$ be a fixed integer and let $\mathbb{E}_n^{\{ 0,k \}} (\T)$ be the expected value of $\T$ over the class of mappings on $n$ nodes with indegrees restricted to the set $\{0,k\}$. Then,
$$\log \mathbb{E}_n^{\{ 0,k \}} (\T) = 
k_0 \frac{ (n/\lambda)^{1/3} }{ \log^{2/3} (n/\lambda) } (1 + o(1)),$$
as $n$ approaches infinity, where $\lambda = k-1$, $k_0 = \frac{3}{2} (3I)^{2/3}$ and $I$ is given in Equation (\ref{I_definition_constant}).
\end{corollary}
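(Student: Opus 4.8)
The plan is to obtain Corollary \ref{corollary_T_fixed_k} as an immediate specialization of Theorem \ref{theorem_T}, since a fixed integer $k \geq 2$ is nothing but the constant sequence $k(r) = k$ for all $r \geq 1$. First I would verify that this constant sequence meets the two standing hypotheses of the theorem. The relation $n = kr$ holds by definition, and $k \geq 2$ is given. For the growth condition, I would observe that because $k$ is bounded while $n^{1-\alpha} \to \infty$ for every $\alpha \in (0,1)$, we have $k/n^{1-\alpha} \to 0$; thus $k = o(n^{1-\alpha})$ is satisfied for \emph{any} choice of $\alpha \in (0,1)$. Equivalently, since $\lambda = k-1$ is constant, $\log(n/\lambda) = \log n\,(1 + o(1))$, so the reformulated hypothesis $\log n = O(\log(n/\lambda))$ holds trivially.

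The second point to reconcile is the asymptotic regime. Theorem \ref{theorem_T} takes all limits as $r \to \infty$, whereas the corollary is stated as $n \to \infty$. When $k$ is held fixed the two are identical, since $r = n/k$ forces $r \to \infty$ exactly when $n \to \infty$. Consequently every $o(1)$ term and error estimate in the conclusion of the theorem translates verbatim into the statement of the corollary, with the same constant $k_0 = \frac{3}{2}(3I)^{2/3}$ and the same $\lambda = k-1$.

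With these verifications in place, I would simply invoke Theorem \ref{theorem_T} to conclude
$$\log \mathbb{E}_n^{\{ 0,k \}} (\T) = k_0 \frac{(n/\lambda)^{1/3}}{\log^{2/3}(n/\lambda)}(1 + o(1)),$$
as $n$ approaches infinity. There is no genuine obstacle here: the only thing worth checking is the growth hypothesis $k = o(n^{1-\alpha})$, which is precisely the reason the more general theorem was established first. The corollary merely records the important fixed-$k$ case explicitly, since that is the regime most directly relevant to the polynomial heuristics discussed in Section \ref{section_heuristics}.
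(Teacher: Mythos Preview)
Your proposal is correct and matches the paper's treatment: the corollary is stated without proof immediately after Theorem~\ref{theorem_T}, since it is the special case $k(r)\equiv k$ constant, which trivially satisfies $k=o(n^{1-\alpha})$ and makes the limits $r\to\infty$ and $n\to\infty$ equivalent. Your explicit verification of the hypotheses is exactly the intended one-line justification.
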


\section{Expected Value of $\BB$} \label{section_expected_value_of_B}

We obtain asymptotic estimates for the expectation of $\BB$ over $\{0,k\}$-mappings using arguments similar to those in Section \ref{section_expected_value_of_T}.
Let $\mu_m$ denote the expected value of the product of the cycle lengths of a uniform random permutation of $S_m$. Using Lemma \ref{lemma_restriction_to_Z} we can write the expected value of $\BB$ over all $\{0,k\}$-mappings on $n = kr$ nodes as
\begin{equation} \label{expected_value_of_B}
\mathbb{E}_n^{ \{0,k\} } ( \BB ) = \sum_{m = 1}^n \mathbb{P}_n^{ \{0,k\} } ( \Z = m ) \mu_m.
\end{equation}
The following lemma gives an asymptotic estimate for $\mu_m$ as $m$ approaches infinity.

\begin{lemma} \label{lemma_mu_m}
Let $\mu_m$ be the expected value of the product of the cycle lengths of a random uniform permutation of $S_m$. Then, as $m$ approaches infinity,
$$\mu_m = \frac{ \exp( 2 \sqrt{m} ) }{ 2 \sqrt{ \pi e} m^{3/4} } (1 + o(1)).$$
In particular, for any $\varepsilon > 0$, there exists $N_\varepsilon$ such that, for all $m > N_\varepsilon$,
$$(2 - \varepsilon) \sqrt{m} < \log \mu_m < (2 + \varepsilon) \sqrt{m}.$$
\end{lemma}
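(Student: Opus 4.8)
The plan is to compute $\mu_m$ via generating functions, since the product of cycle lengths is a multiplicative statistic over cycles and therefore factors nicely in the exponential formula. Recall that a uniform random permutation of $S_m$ assigns probability $1/m!$ to each permutation, so $\mu_m = \frac{1}{m!} \sum_{\sigma \in S_m} \prod_{c \in \sigma} \ell(c)$, where the product is over cycles $c$ of lengths $\ell(c)$. The number of permutations of $[m]$ whose cycle type has $a_j$ cycles of length $j$ is $m! / \prod_j (j^{a_j} a_j!)$, and each such permutation contributes $\prod_j j^{a_j}$ to the product of cycle lengths. Hence the $j^{a_j}$ factors cancel the $j^{a_j}$ in the denominator, and $m! \, \mu_m = \sum_{\sum j a_j = m} \prod_j \frac{1}{a_j!}$, summed over all nonnegative integer sequences $(a_j)$ with $\sum_j j a_j = m$.

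\medskip

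The cancellation identified above is exactly the structure captured by the exponential formula. First I would form the exponential generating function $F(z) = \sum_{m \geq 0} \mu_m z^m$ (with the convention $\mu_0 = 1$), so that $\sum_{m\ge 0} m!\,\mu_m \frac{z^m}{m!} = \sum_{m \ge 0} m!\,\mu_m \cdot (\text{nothing})$ — more precisely, I would work with the ordinary-looking sum $\sum_m \mu_m z^m$ and recognize from the computation above that $\sum_{m \ge 0} \mu_m z^m = \exp\!\left( \sum_{j \geq 1} \frac{z^j}{1} \right)$ would be the naive guess, but the weight $j$ per cycle means each cycle of length $j$ contributes $z^j$ (not $z^j/j$) to the cycle-generating function. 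Thus the per-cycle generating function is $C(z) = \sum_{j \ge 1} z^j = \frac{z}{1-z}$, and by the exponential formula
$$
\sum_{m \geq 0} \mu_m z^m = \exp\!\left( \frac{z}{1-z} \right).
$$
The coefficient extraction $\mu_m = [z^m] \exp\!\left( z/(1-z) \right)$ is then a classical singularity-analysis problem: the function has an essential singularity at $z=1$, and its coefficient asymptotics are governed by the saddle-point method.

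\medskip

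Next I would apply the saddle-point method to $\mu_m = \frac{1}{2\pi i} \oint \exp\!\left( \frac{z}{1-z} \right) z^{-m-1}\,dz$. Writing the integrand as $\exp\!\big( h(z) \big)$ with $h(z) = \frac{z}{1-z} - (m+1)\log z$, I would locate the saddle point by solving $h'(z) = 0$, i.e. $\frac{1}{(1-z)^2} = \frac{m+1}{z}$. For large $m$ the saddle sits at $1 - z \sim m^{-1/2}$, and substituting $z = 1 - m^{-1/2}(1 + o(1))$ gives the dominant exponential growth $\exp(2\sqrt{m})$, with the subexponential prefactor $m^{-3/4}$ and the constant $\frac{1}{2\sqrt{\pi e}}$ emerging from the Gaussian integral around the saddle together with the correction terms in the expansion of $h$ near $z=1$. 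This saddle-point analysis is the technical heart of the argument and the main obstacle: one must expand $z/(1-z)$ and $\log z$ to sufficient order near the (moving) saddle, verify that the contribution off the saddle is negligible, and track the constant carefully to obtain $\frac{1}{2\sqrt{\pi e}}$ rather than merely the order of growth. (This estimate for $[z^m]\exp(z/(1-z))$ is standard and appears, for instance, in \cite{FlajoletSedgewick2009AnalyticCombinatorics}.)

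\medskip

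Finally, the ``in particular'' clause is an immediate consequence of the main asymptotic formula. Taking logarithms, $\log \mu_m = 2\sqrt{m} - \frac{3}{4}\log m - \log(2\sqrt{\pi e}) + o(1) = 2\sqrt{m}\,(1 + o(1))$, since the $\log m$ and constant terms are of lower order than $\sqrt{m}$. Thus for any $\varepsilon > 0$ the lower-order corrections are eventually dominated by $\varepsilon \sqrt{m}$, giving $(2-\varepsilon)\sqrt{m} < \log \mu_m < (2+\varepsilon)\sqrt{m}$ for all $m > N_\varepsilon$, which completes the proof.
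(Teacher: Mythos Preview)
Your argument is correct: the identity $\sum_{m\ge 0}\mu_m z^m=\exp\!\left(\frac{z}{1-z}\right)$ follows exactly from the cycle-index computation you sketch, and the asymptotic $[z^m]\exp(z/(1-z))\sim \frac{1}{2\sqrt{\pi e}}m^{-3/4}e^{2\sqrt m}$ is the standard saddle-point evaluation (it is worked out in full as Example~VIII.7 of \cite{FlajoletSedgewick2009AnalyticCombinatorics}). The ``in particular'' clause is then immediate, as you note.

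As for comparison with the paper: there is nothing to compare. The paper states Lemma~\ref{lemma_mu_m} without proof and without citation, treating it as a known fact; your write-up supplies precisely the argument (generating function plus saddle point) that one would expect to underlie such a statement. The only stylistic comment is that your middle paragraph meanders a bit (``would be the naive guess, but\ldots''); the clean route is to go straight to the weighted cycle EGF $\sum_{j\ge 1}(j-1)!\cdot j\cdot z^j/j!=z/(1-z)$ and invoke the exponential formula, which you eventually do.
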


It is clear from Equation (\ref{expected_value_of_B}) that, if $m_*$ is the integer that maximizes $\mathbb{P}_n^{ \{0,k\} } (\Z = m) \mu_m$ for $1 \leq m \leq n$ and $m_0$ is an integer in $[1,n]$, then
$$\mathbb{P}_n^{ \{0,k\} } ( \Z = m_0 ) \mu_{ m_0 } \leq \mathbb{E}_n^{ \{0,k\} } (\BB) \leq 
n \mathbb{P}_n^{ \{0,k\} } ( \Z = m_* ) \mu_{ m_* }.$$
Let $\varepsilon > 0$. It follows from Lemma \ref{lemma_mu_m} that, for sufficiently large $n$,
\begin{equation} \label{bounds_Z}
\mathbb{P}_n^{ \{0,k\} } ( \Z = m_0 ) e^{ (2 - \varepsilon) \sqrt{m_0} } \leq \mathbb{E}_n^{ \{0,k\} } (\BB) \leq 
n \mathbb{P}_n^{ \{0,k\} } ( \Z = m_* ) e^{ (2 + \varepsilon) \sqrt{m_*} },
\end{equation}
provided that $m^*$ approaches infinity when so does $n$; we defer the proof of this claim to the proof of Theorem \ref{theorem_BB_0,k}.

In light of Equation (\ref{distribution_Z_gamma_function}), in order to obtain upper and lower bounds by Equation (\ref{bounds_Z}) we consider the function
\begin{equation}
U_{n, \varepsilon}(x) = \lambda x k^{x - 1} \frac{ \Gamma(r) }{ \Gamma(r - x + 1) } \frac{ \Gamma(n - x) }{ \Gamma(n) } 
\exp( (2 + \varepsilon) \sqrt{x} ),
\label{U_definition}
\end{equation}
where $\varepsilon$ denotes a real number that may be positive or negative.
If $x_*$ is the point that maximizes $U_n(x)$ for $x \in (0,n)$ and $m_0$ is an integer in $[1,n]$, then Equation (\ref{bounds_Z}) implies, for sufficiently large $n$, that
\begin{equation}
U_{n, -\varepsilon}(m_0) \leq \mathbb{E}_n^{ \{0,k\} } (\BB) \leq 
n \cdot U_{n, \varepsilon}(x_*).
\label{bounds_Z_U}
\end{equation}
In order to simplify the next calculations, we consider $H_{n, \varepsilon}(x) = \log U_{n, \varepsilon}(x)$ and note that $x_*$ is a local maximum of $U_{n, \varepsilon}(x)$ if and only if it is a local maximum of $H_{n, \varepsilon}(x)$. It is known that the inverse of the Gamma function has simple zeroes in the non-positive integers, so the function $H_{n, \varepsilon}(x)$ is not well defined for $x \in \{ r+1, r+2, \dots \}$. 
We consider the range $[1,r]$ and note that this is not an issue because $\mathbb{P}_n(\Z = m) = 0$ for $m > r$. Indeed, every cyclic node in a $\{0,k\}$-mapping $\varphi: [n] \longrightarrow [n]$ has indegree $k$, hence $n$ and $m$ must satisfy $k \cdot m \leq n$.

We recall that a real function is differentiable in this range if and only if it is differentiable in $(1-\delta, h+\delta)$ for some $\delta > 0$.

\begin{proposition}
For each $r \geq 1$, there exists a unique point $x_*$ that maximizes the function $H_{n, \varepsilon}(x)$ for $x \in [1,r]$, where $n = rk$.
Moreover, $H_{n, \varepsilon}(x_*)$ and $H_{n, \varepsilon}( \lfloor x_* \rfloor )$ are both given by
$$\left( 1 + \frac{\varepsilon}{2} \right)^{4/3} \frac{3}{2} \left( \frac{ n }{ \lambda } \right)^{1/3} (1 + o(1)),$$
where $\lambda = k-1$.
\label{propostion_maximizing_H}
\end{proposition}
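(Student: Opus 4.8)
The plan is to mirror the structure of the proof of Proposition \ref{propostion_maximizing_phi}, adapting each step to the simpler exponential factor $\exp((2+\varepsilon)\sqrt{x})$ that replaces $\exp(\beta_\varepsilon\sqrt{x/\log x})$. First I would compute $H_{n,\varepsilon}'(x)$ by differentiating $H_{n,\varepsilon}(x) = \log(\lambda x) + (x-1)\log k + \log\Gamma(r) - \log\Gamma(r-x+1) + \log\Gamma(n-x) - \log\Gamma(n) + (2+\varepsilon)\sqrt{x}$, obtaining
$$H_{n,\varepsilon}'(x) = \frac{1}{x} + \log k + \Psi(r-x+1) - \Psi(n-x) + \frac{2+\varepsilon}{2\sqrt{x}}.$$
To establish uniqueness of the maximum I would examine $H_{n,\varepsilon}''(x) = -\frac{1}{x^2} - \Psi'(r-x+1) + \Psi'(n-x) - \frac{2+\varepsilon}{4}x^{-3/2}$. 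As in the earlier proposition, Lemma \ref{lemma_gamma_function} gives that $\Psi'$ is monotone decreasing, so $n \geq r+1$ forces $-\Psi'(r-x+1) + \Psi'(n-x) \leq 0$; combined with the other two manifestly negative terms, this yields $H_{n,\varepsilon}''(x) < 0$ throughout $[1,r]$, so $H_{n,\varepsilon}$ is strictly concave and its maximum on the closed interval is unique.

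Next I would locate the maximizer heuristically. Using the approximation $\log k + \Psi(r-x+1) - \Psi(n-x) \sim -\lambda x/n$ from Lemma \ref{lemma_equatios_615_616} (as in Equation (\ref{Psi_difference_for_heuristic})), the equation $H_{n,\varepsilon}'(x)=0$ becomes, after multiplying by $x$ and keeping leading terms, $\frac{2+\varepsilon}{2}\sqrt{x} \sim \frac{\lambda x^2}{n}$, which gives $x^{3/2} \sim \frac{(2+\varepsilon)n}{2\lambda}$ and hence the candidate
$$t_* = \left(\frac{(2+\varepsilon)n}{2\lambda}\right)^{2/3} = \left(1+\frac{\varepsilon}{2}\right)^{2/3}\left(\frac{n}{\lambda}\right)^{2/3}.$$
Note the absence of any $\log x$ bootstrap here: because the exponential factor is $\exp((2+\varepsilon)\sqrt{x})$ rather than $\exp(\beta_\varepsilon\sqrt{x/\log x})$, there is no logarithmic correction, so $t_*$ has the clean power form above and no iterated-logarithm refinement is needed.

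To convert the heuristic into a proof I would set $a = t_*(1-\delta_n)$ and $b = t_*(1+\delta_n)$ for a suitable $\delta_n = o(1)$ and show $H_{n,\varepsilon}'(a) > 0$ and $H_{n,\varepsilon}'(b) < 0$, exactly as in Equation (\ref{proving_heuristic}); by continuity and the established concavity this pins the unique maximizer $x_*$ into $[a,b]$, giving $x_* = t_*(1+o(1))$. Since $x_* = \Theta((n/\lambda)^{2/3}) \to \infty$ and $\lfloor x_*\rfloor = x_*(1+o(1))$, the same estimate holds at the integer point. Finally I would evaluate $H_{n,\varepsilon}(x_*)$: substituting $x_*\log k + x_*\log r - x_*\log n = 0$ and applying Lemma \ref{lemma_equatios_615_616}(iii)--(iv) collapses the Gamma terms to $-\lambda x_*^2/(2n) + O(\log n)$, so
$$H_{n,\varepsilon}(x_*) = -\frac{\lambda x_*^2}{2n} + (2+\varepsilon)\sqrt{x_*} + O(\log n).$$
Plugging in $x_* = t_*(1+o(1))$ gives $\frac{\lambda x_*^2}{2n} = \frac{1}{2}(1+\frac{\varepsilon}{2})^{4/3}(n/\lambda)^{1/3}(1+o(1))$ and $(2+\varepsilon)\sqrt{x_*} = (2+\varepsilon)(1+\frac{\varepsilon}{2})^{1/3}(n/\lambda)^{1/3}(1+o(1)) = 2(1+\frac{\varepsilon}{2})^{4/3}(n/\lambda)^{1/3}(1+o(1))$, and the coefficients combine as $(-\tfrac12 + 2)(1+\frac{\varepsilon}{2})^{4/3} = \frac{3}{2}(1+\frac{\varepsilon}{2})^{4/3}$, yielding the stated value. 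The main obstacle is the sign analysis at the endpoints $a,b$: one must verify that the leading $-\frac{3}{2}\delta_n$-type term genuinely dominates the error terms $O(\delta_n^2)$, $O(t_*^2/r^2)$ and $O(1/b)$ for the chosen $\delta_n$, which requires the hypothesis $k = o(n^{1-\alpha})$ to control $r^{-1} = O(\lambda/n)$ and to ensure $\log(n/\lambda) = \Theta(\log n)$; this is the step where the earlier proof invested most of its care, and the same bookkeeping must be redone with the simpler (hence slightly more forgiving) exponential term.
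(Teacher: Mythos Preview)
Your proposal is correct and follows essentially the same approach as the paper's own proof: compute $H_{n,\varepsilon}'$, establish strict concavity via the sign of $H_{n,\varepsilon}''$, locate the maximizer heuristically at $t_* = \bigl((1+\tfrac{\varepsilon}{2})\tfrac{n}{\lambda}\bigr)^{2/3}$, confirm this by a sign check at $t_*(1\pm\delta_n)$ (the paper takes $\delta_n = r^{-1/4}$), and then evaluate $H_{n,\varepsilon}(x_*)$ using Lemma~\ref{lemma_equatios_615_616}. The only cosmetic difference is that the paper additionally verifies $H_{n,\varepsilon}'(1)>0$ and $H_{n,\varepsilon}'(r)<0$ directly to show the maximizer is interior, whereas you infer this from the sign change on $[a,b]$ together with concavity; both are sound.
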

\begin{proof}
The function $\log \Gamma(x)$ is infinitely differentiable for 
$x > 0$; see \cite{AbramowitzStegun1965HandbookMathFunctions}. Hence, for each $n \geq 1$, $H_{n, \varepsilon}(x)$ is infinitely differentiable for $x \in [1,r]$. It follows from Lemma \ref{lemma_gamma_function} that, for $x \in [1,r]$, 
\begin{equation}
\begin{split}
H_{n, \varepsilon}^\prime (x) 
& = 
\frac{1}{x} + \log k + \frac{d}{dx} \log \left( \frac{ \Gamma(n - x) }{ \Gamma(r - x + 1) } \right) 
+ \left( 1 + \frac{\varepsilon}{2} \right) x^{-1/2} \\
& = 
\log k + \frac{1}{x} + \left( 1 + \frac{\varepsilon}{2} \right) x^{-1/2} + \Psi(r - x + 1) - \Psi(n - x).
\end{split}
\label{derivative_H}
\end{equation}
Using Lemma \ref{lemma_gamma_function} we obtain
\begin{align*}
H_{n, \varepsilon}^\prime (1) 
& = \log k + 2 + \frac{\varepsilon}{2} + \Psi(r) - \Psi(n-1) \\
& = 
\log k + 2 + \frac{\varepsilon}{2} + \log r + O \left( \frac{1}{r} \right) - \log (n-1) + O \left( \frac{1}{n-1} \right) \\
& =
\log k + 2 + \frac{\varepsilon}{2} + \log r - \log n - \log \left( 1 - \frac{1}{n} \right) + O \left( \frac{1}{r} \right) \\
& =
\log k + 2 + \frac{\varepsilon}{2} + \log r - \log k - \log r + O \left( \frac{1}{n} \right) = 
2 + \frac{\varepsilon}{2} + O \left( \frac{1}{r} \right).
\end{align*}
Therefore $H_{n, \varepsilon}^\prime (1) > 0$ for sufficiently large values of $n$. 
On the other hand,
\begin{align*}
H_{n, \varepsilon}^\prime (r) 
& =
\log k + \frac{1}{r} + \left( 1 + \frac{\varepsilon}{2} \right) r^{-1/2} + \Psi(1) - \Psi(n - r) \\
& =
\log k + \frac{1}{r} + \left( 1 + \frac{\varepsilon}{2} \right) r^{-1/2} + \Psi(1) - \log (n-r) + O \left( \frac{1}{n-r} \right).
\end{align*}
Since 
$$\frac{1}{n-r} = \frac{1}{k-1} \cdot \frac{1}{r} = O \left( \frac{1}{r} \right),$$
it follows that
$$H_{n, \varepsilon}^\prime (r) = 
\log k + \Psi(1) - \log (k-1) - \log r + O \left( \frac{1}{ r^{1/2} } \right)
= - \log r + O(1).$$
Hence $n = kr$ implies that $H_{n, \varepsilon}^\prime (r) < 0$ for sufficiently large $n$. This proves the existence of a point $x_*$ that is a local maximum of $H_{n, \varepsilon} (x)$.
Also,
$$H_{n, \varepsilon}^{ \prime \prime }(x) = - x^{-2} - \left( \frac{1}{2}+ \frac{\varepsilon}{4} \right) x^{-3/2} - \Psi^\prime (r - x + 1) + \Psi^\prime (n - x).$$
Since $r - x + 1 < n - x$, it follows from Lemma \ref{lemma_gamma_function} that $\Psi^\prime (n - x) < \Psi^\prime (r - x + 1)$ and thus $H_{n, \varepsilon}^{ \prime \prime }(x) < 0$ for $x \in [1,r]$. This proves that $x_*$ is unique.

We obtain next a heuristic estimate for $x_*$ as $n$ approaches infinity. Using Equation (\ref{derivative_H}) and Lemma \ref{lemma_equatios_615_616} one proves that, for $x = o(r)$,
\begin{equation}
\begin{split}
H_{n, \varepsilon}^\prime (x)
& =
\frac{1}{x} + \left( 1 + \frac{\varepsilon}{2} \right) x^{-1/2} - \frac{x}{r} + \frac{x}{n} + O \left( \frac{x^2}{r^2} \right) + O \left( \frac{1}{r} \right) \\
& =
\frac{1}{x} + \left( 1 + \frac{\varepsilon}{2} \right) x^{-1/2} - \frac{ (k-1) x}{n} + O \left( \frac{x^2}{r^2} \right) + O \left( \frac{1}{r} \right).
\end{split}
\label{derivative_H_2}
\end{equation}
%
%
We recall that $\lambda = k-1$ and consider the equation
$$\frac{1}{x} + \left( 1 + \frac{\varepsilon}{2} \right) x^{-1/2} - \frac{ \lambda x}{n} = 0,$$
that is,
$$\left( 1 + \frac{\varepsilon}{2} \right) x^{-1/2} \left( 1 + O \left( \frac{1}{ \sqrt{x} } \right) \right) = \frac{ \lambda x}{n}.$$
The equation above suggests that 
\begin{equation}
x_* = \left( \left( 1 + \frac{\varepsilon}{2} \right) \frac{n}{\lambda} \right)^{2/3} \big( 1 + o(1) \big).
\label{estimate_maximum_H}
\end{equation}
In order to confirm that Equation (\ref{estimate_maximum_H}) holds, we prove that 
\begin{equation}
H_{n, \varepsilon}^\prime \left( \left[ \left( 1 + \frac{\varepsilon}{2} \right) \frac{n}{\lambda} \right]^{2/3} (1 + \delta_n ) \right) < 0 
\label{confirming_estimate_1}
\end{equation}
and
\begin{equation}
H_{n, \varepsilon}^\prime \left( \left[ \left( 1 + \frac{\varepsilon}{2} \right) \frac{n}{\lambda} \right]^{2/3} (1 - \delta_n ) \right) >0,
\label{confirming_estimate_1}
\end{equation}
for some small $\delta_n = o(1)$ to be determined. We observe that Equation (\ref{derivative_H_2}) implies
\begin{align*}
& \hspace{15pt} H_{n, \varepsilon}^\prime \left( \left[ \left( 1 + \frac{\varepsilon}{2} \right) \frac{n}{\lambda} \right]^{2/3} (1 + \delta_n ) \right) \\
& =
\left( \left( 1 + \frac{\varepsilon}{2} \right) \frac{n}{\lambda} \right)^{-2/3} (1 + \delta_n )^{-1} + \left( 1 + \frac{\varepsilon}{2} \right) \left( \left( 1 + \frac{\varepsilon}{2} \right) \frac{n}{\lambda} \right)^{-1/3} (1 + \delta_n )^{-1/2} \\
& \hspace{10pt} 
- \frac{ (k-1) }{n} \left( \left( 1 + \frac{\varepsilon}{2} \right) \frac{n}{\lambda} \right)^{2/3} (1 + \delta_n ) + 
O \big( r^{-2/3} \big) + O \big( r^{-1} \big) \\
& =
\left( 1 + \frac{\varepsilon}{2} \right)^{2/3} \left( \frac{n}{\lambda} \right)^{-1/3} 
(1 + \delta_n )^{-1/2}
- \left( 1 + \frac{\varepsilon}{2} \right)^{2/3} \left( \frac{n}{\lambda} \right)^{-1/3} (1 + \delta_n ) \\
& 
\hspace{10pt}+ O \big( r^{-2/3} \big) \\
& =
\left( 1 + \frac{\varepsilon}{2} \right)^{2/3} \left( \frac{n}{\lambda} \right)^{-1/3} \left[ (1 + \delta_n )^{-1/2} - (1 + \delta_n ) + O \big( r^{-1/3} \big) \right] \\
& =
\left( 1 + \frac{\varepsilon}{2} \right)^{2/3} \left( \frac{n}{\lambda} \right)^{-1/3} \left[ \left( 1 - \frac{1}{2} \delta_n + O( \delta_n^2 ) \right) - (1 + \delta_n ) + O \big( r^{-1/3} \big) \right] \\
& =
\left( 1 + \frac{\varepsilon}{2} \right)^{2/3} \left( \frac{n}{\lambda} \right)^{-1/3} \left( - \frac{3}{2} \delta_n + O( \delta_n^2 ) + O \big( r^{-1/3} \big) \right).
\end{align*}

It is of our interest to write 
$$H_{n, \varepsilon}^\prime \left( \left[ \left( 1 + \frac{\varepsilon}{2} \right) \frac{n}{\lambda} \right]^{2/3} (1 + \delta_n ) \right) =
\left( 1 + \frac{\varepsilon}{2} \right)^{2/3} \left( \frac{n}{\lambda} \right)^{-1/3} \left( - \frac{3}{2} \delta_n + o( \delta_n ) \right),$$
as this would allow us to determine if the left-hand side of the equation above is positive or negative, depending on the value of $\delta_n$. We set $\delta_n = r^{-1/4}$ and conclude that 
\begin{equation} \label{derivative_H_proving_maximum_1}
H_{n, \varepsilon}^\prime \left( \left[ \left( 1 + \frac{\varepsilon}{2} \right) \frac{n}{\lambda} \right]^{2/3} (1 + \delta_n ) \right) <0,
\end{equation}
for sufficiently large $n$. One proves similarly that, for sufficiently large $n$, we have 
$$H_{n, \varepsilon}^\prime \left( \left[ \left( 1 + \frac{\varepsilon}{2} \right) \frac{n}{\lambda} \right]^{2/3} (1 - \delta_n ) \right) =
\left( 1 + \frac{\varepsilon}{2} \right)^{2/3} \left( \frac{n}{\lambda} \right)^{-1/3} \left( \frac{3}{2} \delta_n + o( \delta_n ) \right).$$
Hence, for sufficiently large $n$,
\begin{equation} \label{derivative_H_proving_maximum_2}
H_{n, \varepsilon}^\prime \left( \left[ \left( 1 + \frac{\varepsilon}{2} \right) \frac{n}{\lambda} \right]^{2/3} (1 - \delta_n ) \right) >0.
\end{equation}
%
Equations (\ref{derivative_H_proving_maximum_1}) and (\ref{derivative_H_proving_maximum_2}) imply that Equation (\ref{estimate_maximum_H}) holds indeed.

We estimate the value of $H_{n, \varepsilon}( x_*)$. We recall that $H_{n, \varepsilon}(x_*) = \log U_{n, \varepsilon}(x_*)$, where $U_{n, \varepsilon}(x_*)$ is defined in Equation (\ref{U_definition}). 
It follows from Lemma \ref{lemma_equatios_615_616} that
\begin{align*}
H_{n, \varepsilon}(x_*) 
& = 
x_* \log k - \frac{ x_*^2 }{ 2h } + x_* \log r + \frac{ x_*^2 }{ 2n } - x_* \log n + (2 + \varepsilon) \sqrt{x_*} + O( \log r ) \\
& = 
- (k-1) \frac{ x_*^2 }{ 2n } + (2 + \varepsilon) \sqrt{x_*} + O( \log r ).
\end{align*}
Hence, by Equation (\ref{estimate_maximum_H}),
\begin{align*}
H_{n, \varepsilon}(x_*) 
& = 
\left( \frac{n}{\lambda} \right)^{1/3} \left[ - \frac{1}{2} \left( \frac{2 + \varepsilon}{2} \right)^{4/3} 
+ (2 + \varepsilon) \left( \frac{2 + \varepsilon}{2} \right)^{1/3} \right] \big( 1 + o(1) \big) \\
& = 
\left( 1 + \frac{\varepsilon}{2} \right)^{4/3} \left( \frac{n}{\lambda} \right)^{1/3} 
\left[ - \frac{1}{2} + 2 \right] \big( 1 + o(1) \big) \\
& = 
\left( 1 + \frac{\varepsilon}{2} \right)^{4/3} \frac{3}{2} \left( \frac{n}{\lambda} \right)^{1/3} \big( 1 + o(1) \big),
\end{align*}
as desired. The estimate of $H_{n, \varepsilon}( \lfloor x_* \rfloor )$ follows easily from the fact that 
$$\lfloor x_* \rfloor = x_* - \{ x_* \} = x_* + O(1) = x_* \big( 1 + o(1) \big).$$
\hfill
\end{proof}

\begin{theorem} \label{theorem_BB_0,k}
Let $k = k(r)$ and $n = n(r)$ be sequences such that $n = kr$ and, for some $0 < \alpha < 1$, $k = o(n^{1 - \alpha})$ as $r$ approaches infinity. 
Let $\mathbb{E}_n^{\{ 0,k \}} (\BB)$ be the expected value of $\BB$ over the class of mappings on $n$ nodes with indegrees restricted to the set $\{0,k\}$. Then,
$$\log \mathbb{E}_n^{ \{0,k\} } (\BB) = \frac{3}{2} \left( \frac{ n }{ \lambda } \right)^{1/3} (1 + o(1)),$$
where $\lambda = k-1$.
\end{theorem}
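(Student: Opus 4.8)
The plan is to mirror the sandwich argument used for $\T$ in Theorem \ref{theorem_T}, with the bounds on $M_m$ replaced by those on $\mu_m$ from Lemma \ref{lemma_mu_m} and the maximization of $\phi_{n,\varepsilon}$ replaced by the maximization of $U_{n,\varepsilon}$ already carried out in Proposition \ref{propostion_maximizing_H}. Starting from the elementary two-sided bound preceding Equation (\ref{bounds_Z}) and, once the deferred claim $m_*\to\infty$ is secured, passing to Equation (\ref{bounds_Z_U}),
$$U_{n,-\varepsilon}(m_0)\leq \mathbb{E}_n^{\{0,k\}}(\BB)\leq n\cdot U_{n,\varepsilon}(x_*),$$
I would take logarithms and feed in Proposition \ref{propostion_maximizing_H}: with $-\varepsilon$ in place of $\varepsilon$ and $m_0$ the floor of the corresponding maximizer for the lower bound, and with $\varepsilon$ and the maximizer $x_*$ of $H_{n,\varepsilon}$ for the upper bound. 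Since the Proposition asserts that $H_{n,\varepsilon}(\lfloor x_*\rfloor)$ and $H_{n,\varepsilon}(x_*)$ agree asymptotically, both endpoints reduce to the estimate $(1\pm\varepsilon/2)^{4/3}\frac{3}{2}(n/\lambda)^{1/3}(1+o(1))$.

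Before this can be invoked, I must discharge the claim deferred from the discussion after Equation (\ref{bounds_Z}), namely that the integer $m_*$ maximizing $\mathbb{P}_n^{\{0,k\}}(\Z=m)\mu_m$ tends to infinity; this is exactly what is needed so that Lemma \ref{lemma_mu_m} yields $\log\mu_{m_*}<(2+\varepsilon)\sqrt{m_*}$ in the upper bound. I would argue as in the first part of Lemma \ref{lemma_m_tilde_approaches_infinity}: assuming a bounded subsequence $m_*(n_j)\leq K$ keeps $\mathbb{P}_{n_j}^{\{0,k\}}(\Z=m_*)\mu_{m_*}$ bounded, whereas the competitor $m=\lfloor(n/\lambda)^{1/2}\rfloor$ beats it. Indeed Lemma \ref{lemma_asymptotic_distribution_Z_0,k} gives $\mathbb{P}_n^{\{0,k\}}(\Z=m)=(n/\lambda)^{-1/2}\exp(-\tfrac12+o(1))$ and Lemma \ref{lemma_mu_m} gives $\log\mu_m=2(n/\lambda)^{1/4}(1+o(1))$, so the product has logarithm $-\tfrac12\log(n/\lambda)+2(n/\lambda)^{1/4}(1+o(1))\to\infty$, the desired contradiction. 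One checks en route that $m=o(r)$ and $m^3/r^2\to0$, both immediate since $r$ and $n/\lambda$ are of the same order.

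With the two-sided bound available, the conclusion is routine. The lower bound reads off as $\log\mathbb{E}_n^{\{0,k\}}(\BB)\geq(1-\varepsilon/2)^{4/3}\frac{3}{2}(n/\lambda)^{1/3}(1+o(1))$, after noting that the maximizer is $\Theta((n/\lambda)^{2/3})\to\infty$, so $m_0$ exceeds the threshold $N_\varepsilon$ of Lemma \ref{lemma_mu_m}. The upper bound gives $\log\mathbb{E}_n^{\{0,k\}}(\BB)\leq\log n+(1+\varepsilon/2)^{4/3}\frac{3}{2}(n/\lambda)^{1/3}(1+o(1))$, where the stray $\log n$ is absorbed into the $(1+o(1))$ because the hypothesis $k=o(n^{1-\alpha})$ forces $n/\lambda=\omega(n^\alpha)$ and hence $\log n=o((n/\lambda)^{1/3})$. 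Finally, since $(1\pm\varepsilon/2)^{4/3}\to1$ as $\varepsilon\to0$, Lemma \ref{lemma_calc} permits sending $\varepsilon\to0$ to obtain $\log\mathbb{E}_n^{\{0,k\}}(\BB)=\frac{3}{2}(n/\lambda)^{1/3}(1+o(1))$.

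The substantive analytic work—locating the maximizer $x_*$ of order $((1+\varepsilon/2)n/\lambda)^{2/3}$ and evaluating $H_{n,\varepsilon}(x_*)$—is already contained in Proposition \ref{propostion_maximizing_H}, so within the theorem itself the only step requiring genuine thought is the deferred claim $m_*\to\infty$: one must produce a competitor value of $m$ whose contribution provably diverges and confirm that the error terms of Lemma \ref{lemma_asymptotic_distribution_Z_0,k} remain negligible there. Everything following that is bookkeeping with the parameter $\varepsilon$.
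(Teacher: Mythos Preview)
Your proposal is correct and follows the paper's own proof almost step for step: establish $m_*\to\infty$ by exhibiting a competitor whose contribution diverges, then apply Equation~(\ref{bounds_Z_U}) and Proposition~\ref{propostion_maximizing_H}, and close with Lemma~\ref{lemma_calc}. The one difference is your competitor $m=\lfloor(n/\lambda)^{1/2}\rfloor$ in place of the paper's $m=\lfloor n^{1/2}\rfloor$; your choice is in fact cleaner, since it automatically satisfies the hypothesis $m=o(r)$ of Lemma~\ref{lemma_asymptotic_distribution_Z_0,k} across the full range $k=o(n^{1-\alpha})$, whereas $n^{1/2}=o(r)$ and the subsequent divergence require additional restrictions on $k$.
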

\begin{proof}
We recall that the bounds in Equation (\ref{bounds_Z}) hold provided that the integer $m_* = m_*(n)$ that maximizes $\mathbb{P}_n^{ \{0,k\} } (\Z = m) \mu_m$ for $1 \leq m \leq n$ tends to infinity when so does $n$. We prove this claim next. Indeed, if there exists $C > 0$ and a subsequence $( m_*(n_j) )_j$ such that $m_*(n_j) \leq C$ for all $j \geq 1$, then $\mathbb{P}_{n_j}^{ \{0,k\} } (\Z = m) \mu_m$ is bounded for $j \geq 1$. However, it follows from Lemma \ref{lemma_asymptotic_distribution_Z_0,k} that, for $m = \left\lfloor n^{1/2} \right\rfloor$, 
$$\mathbb{P}_n^{ \{0,k\} } (\Z = m) \mu_m \sim 
\frac{ \lambda n^{1/2} }{ n 2 \sqrt{\pi e} n^{3/4} } 
\exp \left( - \frac{ \lambda ( n^{1/2} )^2 }{2n} + 2 ( n^{1/2} )^{1/2} \right),$$
hence,
$$\mathbb{P}_n^{ \{0,k\} } (\Z = m) \mu_m \sim 
\left( \frac{ \lambda^2 e^{k} }{ 4 \pi } \right)^{1/2} \frac{ e^{ 2 n^{1/4} } }{ n^{5/4} }.$$
Thus, for $m = \left\lfloor n^{1/2} \right\rfloor$, $\mathbb{P}_n^{ \{0,k\} } (\Z = m) \mu_m$ approaches infinity when so does $n$. This is a contradiction, so we have indeed that $m_*(n) \longrightarrow \infty$ as $n \longrightarrow \infty$. 

Let $h = n/k$. We recall that $\mathbb{P}_n(\Z = m) = 0$ for $m > h$. It follows from Equation (\ref{bounds_Z_U}) that
$$\max_{1 \leq m \leq n} \mathbb{P}_n(\Z = m) \mu_m = 
\max_{1 \leq m \leq h} \mathbb{P}_n(\Z = m) \mu_m \leq
n \cdot \max_{1 \leq x \leq h} U_{n, \varepsilon}(x) = n \cdot U_{n, \varepsilon}(x_*).$$
Since $n = \exp(\log n)$, using Proposition \ref{propostion_maximizing_H} we conclude that
\begin{equation}
\log \mathbb{E}_n^{ \{0,k\} } (\BB) \leq
\left( 1 + \frac{\varepsilon}{2} \right)^{4/3} \frac{3}{2} \left( \frac{ n }{ \lambda } \right)^{1/3} (1 + o(1)).
\label{near_final_upper_bound_BB}
\end{equation}

If $\varepsilon > 0$ and $m_0 = \lfloor x_* \rfloor$, then Equation (\ref{bounds_Z_U}) and Proposition \ref{propostion_maximizing_H} imply
\begin{equation}
\log \mathbb{E}_n^{ \{0,k\} } (\BB) \geq  H_{n, -\varepsilon}(m_0) = 
\left( 1 - \frac{\varepsilon}{2} \right)^{4/3} \frac{3}{2} \left( \frac{ n }{ \lambda } \right)^{1/3} (1 + o(1)).
\label{final_lower_bound_BB}
\end{equation}
Let $\varepsilon_1 > 0$
Since $\left( 1 + \varepsilon/2 \right)^{4/3} \to 1$ as $\varepsilon \to 0$, we have $1 - \varepsilon_1 < \left( 1 - \varepsilon/2 \right)^{4/3}$ and $\left( 1 + \varepsilon/2 \right)^{4/3} < 1 + \varepsilon_1$ for sufficiently small $\varepsilon$. The result follows from Equations (\ref{final_lower_bound_T}) and (\ref{final_upper_bound_T}) and Lemma \ref{lemma_calc}.

\hfill
\end{proof}

\begin{corollary} \label{corollary_B_fixed_k}
Let $k \geq 2$ be a fixed integer and let $\mathbb{E}_n^{\{ 0,k \}} (\BB)$ be the expected value of $\BB$ over the class of mappings on $n$ nodes with indegrees restricted to the set $\{0,k\}$. Then, as $n$ approaches infinity, 
$$\log \mathbb{E}_n^{\{ 0,k \}} (\BB) = \frac{3}{2} \left( \frac{ n }{ \lambda } \right)^{1/3} (1 + o(1)),$$
where $\lambda = k-1$.
\end{corollary}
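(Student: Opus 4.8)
The plan is to recognize Corollary \ref{corollary_B_fixed_k} as the special case of Theorem \ref{theorem_BB_0,k} obtained by taking the sequence $k = k(r)$ to be constant. Once this is seen, the entire content of the corollary reduces to checking that a fixed integer $k \geq 2$ satisfies the hypotheses of that theorem and that its conclusion transfers from the limiting regime $r \to \infty$ to the limiting regime $n \to \infty$ in which the corollary is phrased.

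First I would verify the growth hypothesis. For a fixed $k \geq 2$, the constant sequence $k(r) \equiv k$ trivially satisfies $k = o(n^{1 - \alpha})$ for any choice of $0 < \alpha < 1$: since $n = kr \to \infty$ as $r \to \infty$, we have $n^{1-\alpha} \to \infty$ while $k$ remains bounded, whence $k / n^{1-\alpha} \to 0$. Thus the constant sequence lies squarely within the scope of Theorem \ref{theorem_BB_0,k}.

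Second I would reconcile the two notions of limit. In Theorem \ref{theorem_BB_0,k} the asymptotics are taken as $r \to \infty$, whereas the corollary is stated as $n \to \infty$. But with $k$ fixed, $n = kr$ is a strictly increasing unbounded function of $r$, so $n \to \infty$ if and only if $r \to \infty$. Consequently every relation of the form $(1 + o(1))$ that is valid as $r \to \infty$ holds verbatim as $n \to \infty$, and the estimate
$$\log \mathbb{E}_n^{ \{0,k\} } (\BB) = \frac{3}{2} \left( \frac{ n }{ \lambda } \right)^{1/3} (1 + o(1))$$
furnished by Theorem \ref{theorem_BB_0,k} is exactly the assertion of the corollary.

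There is no genuine obstacle to overcome here: the only work is the routine observation that a bounded sequence is $o(n^{1-\alpha})$ and that the two limiting regimes coincide when $k$ is held fixed. The argument is word-for-word parallel to the one that deduces Corollary \ref{corollary_T_fixed_k} from Theorem \ref{theorem_T}, and I would simply invoke Theorem \ref{theorem_BB_0,k} to conclude.
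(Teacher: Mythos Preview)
Your proposal is correct and matches the paper's approach: the corollary is stated without proof immediately after Theorem \ref{theorem_BB_0,k}, as it is simply the specialization to constant $k$, and your verification that a fixed $k \geq 2$ satisfies $k = o(n^{1-\alpha})$ together with the equivalence of $r \to \infty$ and $n \to \infty$ is exactly what is implicitly intended.
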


\section{Lognormality} \label{section_lognormality}

Let $\mu_n^* = \frac{1}{2} \log^2 (\sqrt{n})$, $\sigma_n^* = \frac{1}{ \sqrt{3} } \log^{3/2} (\sqrt{n})$ and $\mu_n = \frac{1}{2} \log^2 (\sqrt{ \frac{n}{\lambda} })$, $\sigma_n = \frac{1}{ \sqrt{3} } \log^{3/2} (\sqrt{ \frac{n}{\lambda} })$.
Harris proved that the sequence of random variables defined over the space of random mappings on $n$ nodes as $X_n = (\log \T - \mu_n^*) / \sigma_n^*$, $n \geq 1$, converges weakly to a standard normal distribution \cite{harris1973DistributionMappings}. In this section we prove an analogue of this result for $\{ 0,k\}$-mappings:
\begin{equation} \label{convergence_distribution_0k_mappings}
\lim_{n \rightarrow \infty} \mathbb{P}_n^{\{0,k\}} 
\left( \frac{ \log \T - \mu_n }{\sigma_n} \leq x \right) = 
\frac{1}{ \sqrt{2\pi} } \int_{-\infty}^x e^{-t^2/2} dt.
\end{equation}
The analogous result for the parameter $\BB$ is proved from Equation (\ref{convergence_distribution_0k_mappings}) by showing that the random variable $\chi_n = \log \BB - \log \T$, when properly normalized, converges in probability to zero.

We write the probability in Equation (\ref{convergence_distribution_0k_mappings}) using the law of total probability: we partition the space of $\{0,k\}$-mappings according to the values $m \in [1,r]$ that $\Z$ assumes.
Let $\xi_1, \xi_2$ be as in Lemma \ref{lemma_concentration}. We partition the interval $[1,r]$ into three subintervals:
\begin{itemize}
\item $ I_1=\{ m: 1\leq m < \xi_1 \}$, 
\item $I_2=\{ m: \xi_1\leq  m \leq  \xi_2 \}$, 
\item $I_3=\{ m:  \xi_2 < m\leq r \}$.
\end{itemize}
Then, by the law of total probability,  
\begin{equation} \label{splitting_probability}
\mathbb{P}_n^{\{ 0,k\}} \left( \log \T \leq \mu_n + x \sigma_n \right) = 
\zeta_1 + \zeta_2 + \zeta_3,
\end{equation}
where 
\begin{equation} \label{zetaj}
\zeta_j = \sum_{m \in I_j} \mathbb{P}_n^{\{ 0,k\}} (\Z = m)
\mathbb{P}_n^{\{ 0,k\}} (\log \T \leq \mu_n + x \sigma_n | \Z = m).
\end{equation}
The conditional probabilities $\mathbb{P}_n^{\{ 0,k\}} (\log \T \leq \mu_n + x \sigma_n | \Z = m)$ in Equation (\ref{zetaj}) can be trivially bounded by $1$, so Lemma \ref{lemma_concentration} clearly implies that
\begin{equation} \label{zeta13small}
\zeta_1 = O\left(  \exp \left( - c \log^{1/4} \left( \frac{n}{\lambda }\right) \right)\right)   
 \quad \text{and} 
\quad \zeta_3= O\left(  \exp \left( - c \log^{1/4} \left( \frac{n}{\lambda }\right) \right)\right),    
\end{equation} 
where $c$ is any positive constant less than $2^{3/4}$. Our 
estimates for $\zeta_2$, the asymptotic main term in (\ref{splitting_probability}),  
  use a strong version of the well known fact that the order of a random permutation is asymptotically log-normal
\cite{BarbourTavare1994LognormalityRate,ErdosTuran1967lognormality}.   Denote by ${\bf Q}_m$ the uniform probability measure on the symmetric group $S_m$ and by $\phi(x) = \frac{1}{2\pi} \int_{-\infty}^x e^{-t^2/2} dt$ the standard normal distribution. 

\begin{theorem}[Barbour and Tavar\'e \cite{BarbourTavare1994LognormalityRate}] \label{theorem_BTcenter}
Let $\alpha_m = \frac{1}{2}\log^{2} m + \log m \log\log m$ and $\beta_m = \frac{1}{\sqrt{3}}\log^{3/2} m$. Then, there exists a constant $K>0$ such that, for all real numbers $x$ and all  integers $m>1$,
$$\biggl | {\bf Q}_m\left( \log \T \leq \alpha_m + x \beta_m \right) - \phi(x) \biggr| \leq
\frac{K}{\sqrt{\log m}}.$$
\end{theorem}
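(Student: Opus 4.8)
The statement is the Erd\H{o}s--Tur\'an limit law for the order of a random permutation, sharpened to a Berry--Esseen--type rate, so the plan is to follow that circle of ideas while tracking every error term down to the scale $\log^{-1/2}m$. For $\sigma\in S_{m}$ the order $\T(\sigma)$ is the least common multiple of the cycle lengths of $\sigma$, whence, writing $v_{p}$ for the $p$-adic valuation,
$$\log\T(\sigma)=\sum_{p\le m}(\log p)\,v_{p}(\T(\sigma))=\sum_{p^{a}\le m}(\log p)\,B_{p,a},\qquad B_{p,a}=\mathbf{1}\big[\,\exists\,\text{cycle length }\ell\ \text{with }p^{a}\mid\ell\,\big].$$
First I would check that the prime-power terms with $a\ge2$ contribute $O(\log m)$ in mean and carry negligible variance (the primes too close to $m$ for the Poisson approximation being handled by a separate truncation), reducing the problem to the law of $Y:=\sum_{p}(\log p)B_{p}$ with $B_{p}:=B_{p,1}$.

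The structural input is that the cycle counts $(C_{1},C_{2},\dots)$ of a uniform random permutation are close to independent Poisson variables $Z_{j}\sim\mathrm{Poisson}(1/j)$. I would make this quantitative through the Feller coupling, which realizes $(C_{j})_{j}$ and $(Z_{j})_{j}$ on one probability space with a total-variation discrepancy small in $m$, so that the distribution of $Y$ may be transferred to the corresponding functional of the independent family $(Z_{j})_{j}$ at a cost controlled at the target scale. Under the Poisson surrogate,
$$\mathbb{P}(B_{p}=0)=\exp\Big(-\sum_{j\le m/p}\tfrac{1}{jp}\Big)=\exp\Big(-\tfrac{\log(m/p)}{p}\,(1+o(1))\Big).$$

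I would then compute the first two moments of $Y$ using Mertens' estimate $\sum_{p\le x}\tfrac{\log p}{p}=\log x+O(1)$ and the prime number theorem $\theta(x)=\sum_{p\le x}\log p\sim x$. Since $\mathbb{E}[Y]=\sum_{p}(\log p)\big(1-e^{-\mu_{p}}\big)$ with $\mu_{p}=\log(m/p)/p$, evaluating the sum as a Riemann--Stieltjes integral against $\theta$ and substituting $u=\log(m/t)$ turns the leading part into $\int_{0}^{\log m}u\,du=\tfrac12\log^{2}m$, while a more delicate treatment of the transition region $p\asymp\log m$ together with the small primes supplies the secondary term $\log m\log\log m$, giving the centering $\alpha_{m}$. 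The variance is $\tfrac13\log^{3}m=\beta_{m}^{2}$; here one must retain the covariances between distinct primes, since a single cycle whose length carries several prime factors forces several of the $B_{p}$ to equal $1$ simultaneously, and these positive correlations contribute at the same order as the diagonal terms.

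Finally I would establish the normal approximation with its rate. Because the $B_{p}$ are only weakly dependent under the Poisson model, a plain Berry--Esseen theorem for independent summands does not apply directly; instead I would invoke a quantitative central limit theorem tailored to this dependence structure (Stein's method, or a characteristic-function estimate as in \cite{BarbourTavare1994LognormalityRate}), for which the relevant ratio of a third-moment-type quantity to $\beta_{m}^{3}$ is of order $\log m/\beta_{m}=O(1/\sqrt{\log m})$. The hard part is holding all three error sources at precisely this scale at once. The centering is the most sensitive: a misplacement of the mean by $\delta$ shifts the limiting distribution function by $O(\delta/\beta_{m})$, so $\alpha_{m}$ must match $\mathbb{E}[\log\T]$ to within $O(\beta_{m}/\sqrt{\log m})=O(\log m)$; since the omitted second-order term $\log m\log\log m$ far exceeds $\log m$, the correction $\log m\log\log m$ cannot be dropped, and carrying it through while simultaneously certifying that the Feller coupling and the weak inter-prime dependence each perturb the distribution function by only $O(1/\sqrt{\log m})$ is the crux of the argument.
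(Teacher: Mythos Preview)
The paper does not prove this theorem; it is stated with attribution to Barbour and Tavar\'e and then invoked as a black box in the proof of Lemma~\ref{lemma_Pn_conditional} and Theorem~\ref{theorem_lognormalityT}. There is therefore no argument in the present paper against which your proposal can be compared.

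As an outline of how one would prove the Barbour--Tavar\'e result itself, your plan is broadly on target: the decomposition of $\log\T$ over prime powers, the reduction to simple prime indicators $B_p$ after discarding a negligible higher-power contribution, the transfer to the independent Poisson model for cycle counts via the Feller coupling, the identification of $\alpha_m$ and $\beta_m^2$ as the leading mean and variance, and a quantitative CLT delivering the $O((\log m)^{-1/2})$ rate are the right ingredients. Your observation that the secondary term $\log m\,\log\log m$ in $\alpha_m$ is forced by the target rate is correct and important. That said, the sketch leaves substantial work undone. The Feller coupling controls total variation on the cycle-count vector, but one must still check that the functional $\log\T$ is stable enough under that coupling to transfer distributional estimates at the scale $(\log m)^{-1/2}$. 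More seriously, the indicators $B_p$ are genuinely dependent (a single cycle of length divisible by $pq$ lights up both $B_p$ and $B_q$), so the CLT step cannot be a standard Berry--Esseen bound for independent summands; the original proof handles this via Stein's method with a carefully constructed coupling, and your parenthetical ``Stein's method, or a characteristic-function estimate'' understates how much of the difficulty lies precisely here. These are not errors in your strategy so much as points at which the phrase ``I would invoke'' conceals most of the actual work.
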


In the next lemma, we use Lemma \ref{lemma_restriction_to_Z} and Theorem \ref{theorem_BTcenter} to approximate all the conditional probabilities in the definition of $\zeta_2$. It turns out that the interval $[\xi_1,\xi_2]$ is narrow enough so that all conditional probabilities in the summand of $\zeta_2$ are approximately $\phi(x)$.

\begin{lemma} \label{lemma_Pn_conditional}
For $n = kr$ and $m \in I_2$, let 
$$\delta_x(m,n) = \mathbb{P}_n^{\{ 0,k\}} 
\left( \log \T \leq \mu_n + x\sigma_n | \Z = m \right) - \phi(x),$$
and let $\Delta_x(n) = \max \{ |\delta_x(m,n)|, m \in I_2 \}$. There are positive constants $K_{1},K_{2}$ such that
\[ \Delta_x(n) = \max_{m \in I_2} |\delta_x(m,n)| \leq 
\frac{K_1}{\log^{1/4}(\sqrt{n/\lambda})}+\frac{K_2|x|}{\log^{3/4}(\sqrt{n/\lambda})}.\]
\end{lemma}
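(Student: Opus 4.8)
The plan is to first reduce the conditional probability to a statement about uniform random permutations, and then to compare the global centering and scaling $(\mu_n,\sigma_n)$ with the size-dependent centering and scaling $(\alpha_m,\beta_m)$ of Theorem~\ref{theorem_BTcenter}. By Lemma~\ref{lemma_restriction_to_Z}, conditioned on $\Z = m$ the restriction $f\big|_{\cal Z}$ is a uniform random permutation of $m$ points, and since $\T(f)$ is exactly the order of that permutation, the conditional law of $\log\T$ under $\mathbb{P}_n^{\{0,k\}}(\,\cdot\,|\,\Z=m)$ coincides with its law under $\mathbf{Q}_m$. Hence $\mathbb{P}_n^{\{0,k\}}(\log\T \leq \mu_n + x\sigma_n \mid \Z = m) = \mathbf{Q}_m(\log\T \leq \mu_n + x\sigma_n)$, and it remains to control this quantity uniformly for $m \in I_2$.

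Writing $L = \log(\sqrt{n/\lambda})$ for brevity, I would reparametrize the threshold into the form used by Barbour--Tavar\'e: set $y = y(m,n,x) = (\mu_n + x\sigma_n - \alpha_m)/\beta_m$, so that $\mu_n + x\sigma_n = \alpha_m + y\beta_m$. Theorem~\ref{theorem_BTcenter} then gives $|\mathbf{Q}_m(\log\T \leq \mu_n + x\sigma_n) - \phi(y)| \leq K/\sqrt{\log m}$. Since $\phi$ is Lipschitz (its density is bounded), the triangle inequality yields $|\delta_x(m,n)| \leq K/\sqrt{\log m} + C_\phi\,|y - x|$, and splitting $y - x = (\mu_n - \alpha_m)/\beta_m + x(\sigma_n/\beta_m - 1)$ reduces everything to estimating these two deterministic quantities uniformly over $m\in I_2$.

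The crux is the uniform control of $\log m$ across the window $I_2 = [\xi_1,\xi_2]$. From Lemma~\ref{lemma_unimodal} we have $m_\# = \sqrt{n/\lambda}\,(1+o(1))$, hence $\log m_\# = L + O(1/\sqrt{n/\lambda})$; and since $\xi_1 = m_\#^{1-\varepsilon_n}$, $\xi_2 = m_\#^{1+\varepsilon_n}$ with $\varepsilon_n = L^{-3/4}$, every $m\in I_2$ satisfies $|\log m - \log m_\#| \leq \varepsilon_n\log m_\#$, so that $|\log m - L| = O(\varepsilon_n L) = O(L^{1/4})$. Expanding $\alpha_m = \tfrac12\log^2 m + \log m\log\log m$ and $\mu_n = \tfrac12 L^2$ gives $\mu_n - \alpha_m = \tfrac12(L-\log m)(L+\log m) - \log m\log\log m$, whose modulus is $O(L^{1/4}\cdot L) = O(L^{5/4})$ from the first term, the second being only $O(L\log L)$. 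Dividing by $\beta_m = \tfrac{1}{\sqrt3}\log^{3/2}m = \Theta(L^{3/2})$ produces $(\mu_n - \alpha_m)/\beta_m = O(L^{-1/4})$, the source of the $K_1/\log^{1/4}(\sqrt{n/\lambda})$ term. For the scaling factor, $\sigma_n/\beta_m = (L/\log m)^{3/2} = (1 + O(L^{-3/4}))^{3/2} = 1 + O(L^{-3/4})$, giving the $K_2|x|/\log^{3/4}(\sqrt{n/\lambda})$ term; and the Barbour--Tavar\'e error $K/\sqrt{\log m} = O(L^{-1/2})$ is of smaller order than $L^{-1/4}$ and is absorbed into $K_1$. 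As all the $O$-constants are independent of $m$, taking the maximum over $m\in I_2$ gives the claimed bound on $\Delta_x(n)$.

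The main obstacle I anticipate is verifying that the window width $\varepsilon_n = L^{-3/4}$ is exactly calibrated so that the centering discrepancy $(\mu_n-\alpha_m)/\beta_m$ is no larger than order $L^{-1/4}$: one must check that the dominant contribution is the quadratic difference $\tfrac12(L^2 - \log^2 m)$, and confirm that both the $\log m\log\log m$ correction inside $\alpha_m$ and the $1/\sqrt{\log m}$ Barbour--Tavar\'e error are genuinely of smaller order, all with constants uniform in $m$ across $I_2$ so that passing to the maximum is legitimate.
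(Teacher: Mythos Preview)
Your proposal is correct and follows essentially the same approach as the paper: reduce to $\mathbf{Q}_m$ via Lemma~\ref{lemma_restriction_to_Z}, introduce $y$ so that $\mu_n + x\sigma_n = \alpha_m + y\beta_m$, bound $|\delta_x(m,n)|$ by $K/\sqrt{\log m} + |y-x|$ using Theorem~\ref{theorem_BTcenter} and the Lipschitz property of $\phi$, and then control $|y-x|$ uniformly over $I_2$ via $\log m = L(1+O(\varepsilon_n))$. Your explicit decomposition $\mu_n - \alpha_m = \tfrac12(L-\log m)(L+\log m) - \log m\log\log m$ and the observation that the $\log m\log\log m$ term is of lower order than $L^{5/4}$ is exactly the computation the paper summarizes as $\alpha_m - \mu_n = O(\alpha_m\varepsilon_n) = O(\beta_m L^{-1/4})$.
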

\begin{proof}
It follows from Lemma \ref{lemma_restriction_to_Z} that, for any $m \in I_2$,
\begin{equation} \label{reduce2perms}
\mathbb{P}_n^{\{ 0,k\}} \left( \frac{ \log \T - \mu_n}{\sigma_n} \leq x \Bigr| \Z = m \right)
={\bf Q}_m\left( \frac{\log \T - \mu_n}{\sigma_n} \leq x \right).
\end{equation}
Let $\alpha_m$ and $\beta_m$ be as in Theorem \ref{theorem_BTcenter}. Define $y=y(n,m,x)$ to be the real number for which $\mu_n + x \sigma_n = \alpha_m + y\beta_m$. Then,
\begin{equation} \label{yonly}
{\bf Q}_m \left( \frac{\log \T - \mu_n}{\sigma_n} \leq x \right) = 
{\bf Q}_m \left( \frac{\log \T - \alpha_m}{\beta_m} \leq y \right).
\end{equation}
Therefore, by Equations (\ref{reduce2perms}) and (\ref{yonly}),
$$|\delta_x(m,n)| =
\left| {\bf Q}_m \left( \frac{\log \T -\alpha_m}{\beta_m} \leq y \right) - \phi(x) \right|,$$
and thus, by the triangle inequality,
\begin{equation} \label{triangle}
|\delta_x(m,n)| \leq
\left|{\bf Q}_m \left( \frac{\log \T - \alpha_m}{\beta_m} \leq y \right) - \phi(y) \right|
+ \left| \phi(y) - \phi(x) \right|.
\end{equation}
We note that Theorem \ref{theorem_BTcenter} implies that, for some constant $K_3 > 0$,
\begin{equation}
\label{postapp}
\left| {\bf Q}_m \left( \frac{\log \T -\alpha_m}{\beta_m} \leq y \right) - \phi(y )\right| \leq 
\frac{K_3}{\sqrt{\log m}}.
\end{equation}
Also, from the definition of $\phi$ we obtain 
\begin{equation} \label{phy_y_phi_x}
|\phi(y)-\phi(x)| = \left| \int_x^y \frac{e^{-t^{2}/2}}{\sqrt{2\pi}} \, dt \right| \leq |y-x|.
\end{equation}
Combining Equations (\ref{triangle})-(\ref{phy_y_phi_x}) we obtain
\begin{equation}
\label{back2P}
|\delta_x(m,n)| \leq \frac{K_3}{\sqrt{\log m}} + |y-x|.
\end{equation}
In order to estimate $|y-x|$, we note that the definition of $y$ implies
\begin{equation} \label{y-x}
y - x = \frac{ (\mu_n-\alpha_m)+x(\sigma_n-\beta_m) }{\beta_m}.
\end{equation}
Since $I_2 = \{ m:  m_{\#}^{1-\varepsilon_n} \leq  m \leq  m_{\#}^{1+\varepsilon_n} \}$, where $m_\#=\sqrt{n/\lambda}+O(1)$ and $\varepsilon_n = \log^{-3/4} (\sqrt{n/\lambda})$, we have for $m\in I_2$ that
$$\log m = \log\left(\sqrt{n/\lambda} \right) (1+O(\varepsilon_n)).$$
Combining this with $\beta_m= \frac{1}{\sqrt{3}}\log^{3/2} m$ and 
$\sigma_n = \frac{1}{\sqrt{3}}\log^{3/2} \sqrt{n/\lambda}$, we obtain $\beta_m = \sigma_n( 1 +O(\varepsilon_n))$, hence
\begin{equation} \label{betam}
\sigma_n - \beta_m = O(\sigma_n \varepsilon_n) = O(\beta_m \varepsilon_n).
\end{equation}
Using the same argument we prove that $\mu_n = \alpha_m ( 1 +O(\varepsilon_n))$, and thus
\begin{equation} \label{alpham}
\alpha_m - \mu_n = O(\alpha_m \varepsilon_n) = O \left( \beta_m \log^{-1/4} \left( \sqrt{n/\lambda} \right) \right).
\end{equation}
It follows from Equations (\ref{y-x})-(\ref{alpham}) that
\begin{equation} \label{y-x_2}
y-x = O \left( \log^{-1/4} \left( \sqrt{n/\lambda} \right) \right) +  O(|x|\varepsilon_n).
\end{equation}
Since $m > \xi_1 = O \left( \log \sqrt{n/\lambda} \right)$, it follows from Equations (\ref{back2P}) and (\ref{y-x_2}) that 
$$|\delta_x(m,n)| = 
O \left( \log^{-1/2} \left( \sqrt{n/\lambda} \right) \right) + O \left( \log^{-1/4} \left( \sqrt{n/\lambda} \right) \right) +  
O\left(|x| \log^{-3/4} (n)\right).$$
We note that the right-hand side of the equation above depends on $n$ and $x$, but not on $m$.
The result follows at once from the estimate
\begin{equation} \label{xdepbound}
\Delta_x(n) = \max_{m \in I_2} |\delta_x(m,n)| \leq 
\frac{K_1}{\log^{1/4}(\sqrt{n/\lambda})}+\frac{K_2|x|}{\log^{3/4}(\sqrt{n/\lambda})},
\end{equation}
where $K_1, K_2$ are positive constants. 
\hfill
\end{proof}

\begin{theorem} \label{theorem_lognormalityT}
Let $k = k(r)$ and $n = n(r)$ be sequences such that $n = kr$ and, for some $0 < \alpha < 1$, $k = o(n^{1 - \alpha})$ as $r$ approaches infinity. 
Let $\mu_n=\frac{1}{2}\log^{2}(\sqrt{n/\lambda})$, $\sigma_n^{2} = \frac{1}{3}\log^{3}(\sqrt{n/\lambda})$. Let $\T(f)$ denote the least common multiple of the length of the cycles of a mapping $f$ and, for $r \geq 1$, let $X_n$ be the random variable defined over the space of $\{0,k\}$-mappings on $n$ nodes as $X_n = (\log \T - \mu_n) / \sigma_n$. 
Then, the sequence defined by $X_n$ converges in distribution to a standard normal distribution. Furthermore
\begin{equation}
\label{approximation}
\mathbb{P}_n^{\{0,k\}} \left( \log \T \leq \mu_n + x \sigma_n \right) = \phi(x) +  O\left(\frac{|x|+\log^{1/2}(\sqrt{n/\lambda})}{  \log^{3/4}(\sqrt{n/\lambda})}\right). \end{equation}
\end{theorem}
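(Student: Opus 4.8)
The plan is to assemble the estimate (\ref{approximation}) directly from the decomposition (\ref{splitting_probability}), using the two pieces of machinery already in hand: the tail bounds (\ref{zeta13small}) for $\zeta_1$ and $\zeta_3$, and the uniform conditional approximation of Lemma \ref{lemma_Pn_conditional}. The contributions $\zeta_1$ and $\zeta_3$ are immediate: since the conditional probabilities appearing there are trivially at most $1$, the concentration estimate of Lemma \ref{lemma_concentration} gives $\zeta_1, \zeta_3 = O(\exp(-c\log^{1/4}(n/\lambda)))$ for any positive $c < 2^{3/4}$. These are exponentially small in a power of $\log(n/\lambda)$, hence negligible next to the polynomial-in-$1/\log$ error we are aiming for, so all the genuine content sits in $\zeta_2$.

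For $\zeta_2$ I would write each conditional probability as $\phi(x) + \delta_x(m,n)$, following the definition in Lemma \ref{lemma_Pn_conditional}, and split

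$$\zeta_2 = \phi(x)\sum_{m \in I_2}\mathbb{P}_n^{\{0,k\}}(\Z = m) + \sum_{m \in I_2}\mathbb{P}_n^{\{0,k\}}(\Z = m)\,\delta_x(m,n).$$

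The first sum is exactly $\mathbb{P}_n^{\{0,k\}}(\xi_1 \leq \Z \leq \xi_2)$, which by Lemma \ref{lemma_concentration}(iii) equals $1 - O(\exp(-c\log^{1/4}(n/\lambda)))$; since $0 \leq \phi(x) \leq 1$, the first term is thus $\phi(x)$ up to an exponentially small error. The second sum I would bound in absolute value by $\Delta_x(n)\sum_{m\in I_2}\mathbb{P}_n^{\{0,k\}}(\Z = m) \leq \Delta_x(n)$, and then insert the explicit bound on $\Delta_x(n)$ from Lemma \ref{lemma_Pn_conditional}. Note that this step works precisely because that bound is uniform over $m \in I_2$ and valid for every real $x$.

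Combining the three parts yields

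$$\mathbb{P}_n^{\{0,k\}}(\log\T \leq \mu_n + x\sigma_n) = \phi(x) + O(\Delta_x(n)) + O\!\left(\exp(-c\log^{1/4}(n/\lambda))\right).$$

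The final step is pure bookkeeping: the exponentially small remainder is dominated by the $\log^{-1/4}(\sqrt{n/\lambda})$ term coming from $\Delta_x(n)$, and one checks the algebraic identity

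$$\frac{K_1}{\log^{1/4}(\sqrt{n/\lambda})} + \frac{K_2|x|}{\log^{3/4}(\sqrt{n/\lambda})} = O\!\left(\frac{|x| + \log^{1/2}(\sqrt{n/\lambda})}{\log^{3/4}(\sqrt{n/\lambda})}\right),$$

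which gives (\ref{approximation}). Convergence in distribution then follows by fixing $x$ and letting $r \to \infty$: the standing hypothesis $k = o(n^{1-\alpha})$ forces $\log(n/\lambda)\to\infty$, so the error term vanishes and the distribution function of $X_n$ converges pointwise to the continuous limit $\phi$. I do not expect a real obstacle at this stage, as the substantive analytic work has already been discharged in Lemmas \ref{lemma_concentration} and \ref{lemma_Pn_conditional}; the only points demanding slight care are confirming that the $x$-dependence in $\Delta_x(n)$ is controlled uniformly and that, for the distributional convergence, the error indeed tends to zero for each fixed $x$.
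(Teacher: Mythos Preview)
Your proposal is correct and follows essentially the same route as the paper: decompose via (\ref{splitting_probability}), kill $\zeta_1,\zeta_3$ with Lemma \ref{lemma_concentration}, and handle $\zeta_2$ by writing each conditional probability as $\phi(x)+\delta_x(m,n)$ and invoking the uniform bound on $\Delta_x(n)$ from Lemma \ref{lemma_Pn_conditional}. The only cosmetic difference is that the paper phrases the middle step as bounding $|\zeta_2-\phi(x)|$ after rewriting $\phi(x)=\phi(x)\sum_{m\in I_2}\mathbb{P}_n^{\{0,k\}}(\Z=m)+O(\exp(-c\log^{1/4}(n/\lambda)))$, whereas you split $\zeta_2$ directly; the two are algebraically equivalent.
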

\vskip0cm\noindent
{\bf Remark.}   
Because the distribution function of  $X_n$ converges pointwise to $\phi(x)$,  we know from  Lemma 3 in Section 8.2 of  \cite{ChowTeicher}
that $X_{n}$  must also converge uniformly;  there is a function $\omega(n)$ such that  $\omega(n)\rightarrow\infty$ and, for all $x\in {\mathbb R}, $
\begin{equation}
\label{uniformbound}
  \biggl|\ \mathbb{P}_n^{\{0,k\}} \left( \log \T \leq \mu_n + x \sigma_n \right) - \phi(x)\ \biggr| \leq \frac{1}{\omega(n)}.\end{equation}
However, we are not free to choose $\omega(n)$; it   is an unspecified function that could grow arbitrarily slowly.  If we impose the restriction that $|x|\leq b_{n}$,
where $b_{n}=\log^{1/2}(\sqrt{n/\lambda})$, or any other specific function we choose that is $o\left(\log^{3/4}(\sqrt{n/\lambda})\right)$, then 
Equation (\ref{approximation}) gives a better error term than the uniform bound in (\ref{uniformbound}).

\begin{proof}
It follows from Equation (\ref{splitting_probability}) that 
$$|\mathbb{P}_n^{\{0,k\}} \left( \log \T \leq \mu_n + x \sigma_n \right) - \phi(x)| =
|\zeta_1 + \zeta_2 + \zeta_3-  \phi(x)|.$$
Hence, by the triangle inequality,
\begin{equation} \label{final_estimate_for_lognormality}
|\mathbb{P}_n^{\{0,k\}} \left( \log \T \leq \mu_n + x \sigma_n \right) - \phi(x)| \leq 
|\zeta_2 - \phi(x)| +\zeta_1 + \zeta_3.
\end{equation}
Using Lemma \ref{lemma_concentration} we are able to write 
$$\phi(x) = \sum_{m \in I_2} \mathbb{P}_n^{\{ 0,k\}} (\Z = m) \phi(x) + O\left(  \exp \left( - c \log^{1/4} \left( \frac{n}{\lambda }\right)\right) \right).$$
Therefore, by the triangle inequality and Equation (\ref{zetaj}),
\begin{align*}
|\zeta_2-\phi(x)| 
\leq &
\sum_{m \in I_2}{\mathbb P}_n^{\{0,k\}} (\Z = m)
\left| \mathbb{P}_n^{\lbrace 0,k\rbrace} \left( \frac{ \log \T - \mu_n }{\sigma_n} \leq x 
\big| {\bf Z}=m \right) - \phi(x) \right| \\
& + 
 O(  \exp \left( - c \log^{1/4} \left( \frac{n}{\lambda }\right) \right).
\end{align*}
Using the definition of   $ \Delta_x(n)$   in  Lemma \ref{lemma_Pn_conditional},
we have
\begin{equation} \label{zeta2-phi}
|\zeta_2-\phi(x)| \leq  \Delta_x(n) \sum_{m \in I_2} \mathbb{P}_n^{\{0,k\}} (\Z = m) + O(  \exp \left( - c \log^{1/4} \left( \frac{n}{\lambda }\right) \right).
\end{equation}
Using Lemma \ref{lemma_Pn_conditional} in (\ref{zeta2-phi}), and putting (\ref{zeta13small}) into (\ref{final_estimate_for_lognormality}), we get
\begin{eqnarray*}
  & & |\mathbb{P}_n^{\{0,k\}} \left( \log \T \leq \mu_n 
      + x \sigma_n \right) - \phi(x)| \\
& \leq & \frac{K_2}{\log^{1/4}(\sqrt{n/\lambda})}+
       \frac{K_3|x|}{\log^{3/4}(\sqrt{n/\lambda})}+ 
       O(\exp \left( - c \log^{1/4} \left( 
       \frac{n}{\lambda }\right) \right).
\end{eqnarray*}
The last of the three terms is negligible.
\hfill
\end{proof}

The next theorem implies that, for most $\{0,k\}$-mappings on 
$n$ nodes, $\log \BB$ and $\log \T$ are approximately equal.
(Later this fact will be used to prove that $\log \BB$ is also asymptotically normal.)

\begin{theorem} \label{theorem_convergence_probability_difference}
Let $k = k(r)$ and $n = n(r)$ be sequences such that $n = kr$ and, for some $0 < \alpha < 1$, $k = o(n^{1 - \alpha})$ as $r$ approaches infinity. 
For $r \geq 1$, let $\chi_n$ be the random variable defined over $\{0,k\}$-mappings on $n$ nodes as $\chi_n = (\log \BB - \log \T)/\sigma_n$, where $\sigma_n = \frac{1}{\sqrt{3}}\log^{3/2} (\sqrt{n/\lambda})$. Then, for any $\varepsilon >0$, 
$$\mathbb{P}_n^{\{0,k\}}\left( \chi_n > \varepsilon \right) = O\left(\frac{(\log\log n)^{2}\log n}{\varepsilon \sigma_{n}}\right),$$
as $r$ approaches infinity.
\end{theorem}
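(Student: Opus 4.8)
The plan is to bound the one-sided tail by Markov's inequality after controlling the mean of $\log \BB - \log \T$. Since the least common multiple of a collection of integers divides their product, we always have $\T \le \BB$, so $\chi_n \ge 0$ and it suffices to estimate $\mathbb{E}_n^{\{0,k\}}(\log \BB - \log \T)$. Indeed, once this mean is shown to be $O\big((\log\log n)^2 \log n\big)$, the event $\{\chi_n > \varepsilon\}$ coincides with $\{\log\BB - \log\T > \varepsilon\sigma_n\}$, and Markov's inequality applied to the nonnegative variable $\log\BB-\log\T$ gives $\mathbb{P}_n^{\{0,k\}}(\chi_n > \varepsilon) \le \mathbb{E}_n^{\{0,k\}}(\log\BB - \log\T)/(\varepsilon\sigma_n)$, which is exactly the claimed estimate.

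First I would reduce the mean to a statement about random permutations. Splitting the space of $\{0,k\}$-mappings according to the value of $\Z$ and invoking Lemma \ref{lemma_restriction_to_Z}, which says that conditionally on $\Z = m$ the restriction of $f$ to its cyclic nodes is uniform in $S_m$, gives $\mathbb{E}_n^{\{0,k\}}(\log\BB - \log\T) = \sum_{m=1}^{r} \mathbb{P}_n^{\{0,k\}}(\Z = m)\, e_m$, where $e_m$ denotes the expectation of $\log\BB - \log\T$ under the uniform measure ${\bf Q}_m$ on $S_m$. The quantity $\log\BB - \log\T$ of a mapping depends only on the cycle lengths of its cyclic part, so $e_m$ is a pure permutation statistic, independent of $k$.

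The crux is the uniform permutation estimate $e_m = O\big((\log\log m)^2 \log m\big)$, and this is where the difficulty lies. I would prove it through the von Mangoldt identity $\log N = \sum_{d \ge 2}\Lambda(d)\,\mathbf{1}[d \mid N]$. Writing $N_d$ for the number of cycles whose length is divisible by $d$, this yields $\log\BB = \sum_{d\ge 2}\Lambda(d) N_d$ and $\log\T = \sum_{d\ge 2}\Lambda(d)\,\mathbf{1}[N_d \ge 1]$, hence $\log\BB - \log\T = \sum_{d\ge 2}\Lambda(d)(N_d - 1)^{+}$. Taking expectations under ${\bf Q}_m$ and using the classical facts that the expected number of cycles of length $\ell$ is $1/\ell$ and that the factorial moments of cycle counts essentially factor (so that $\mathbb{E}_{{\bf Q}_m}[N_d] \le \tfrac1d(1+\log m)$ and $\mathbb{E}_{{\bf Q}_m}\binom{N_d}{2} = O\big(d^{-2}\log^2 m\big)$), I would bound $\mathbb{E}_{{\bf Q}_m}(N_d-1)^{+}$ by $\mathbb{E}_{{\bf Q}_m}[N_d]$ for small $d$ and by $\mathbb{E}_{{\bf Q}_m}\binom{N_d}{2}$ for large $d$. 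Splitting the sum at $d = \log m$ and using Chebyshev's and Mertens' theorems in the forms $\sum_{d\le D}\Lambda(d)/d = \log D + O(1)$ and $\sum_{d>D}\Lambda(d)/d^2 = O(D^{-1}\log D)$ balances the two ranges at order $\log m\,\log\log m$, which is subsumed by the stated $O\big((\log\log m)^2\log m\big)$. The main obstacle is precisely this balancing: the naive bound $\mathbb{E}_{{\bf Q}_m}(N_d-1)^{+} \le \mathbb{E}_{{\bf Q}_m}[N_d]$ alone yields only $O(\log^2 m)$, which after Markov would fail to tend to zero, so the second-moment refinement for large $d$ is indispensable. (Alternatively, this permutation estimate may be quoted directly from Proposition~1.2 of \cite{schmutz2011period}, since it is insensitive to the indegree restriction.)

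Finally I would assemble the pieces. Because $m \le r \le n$, every $e_m$ in the sum is at most $O\big((\log\log n)^2\log n\big)$ uniformly in $m$, so summing against the probabilities $\mathbb{P}_n^{\{0,k\}}(\Z = m)$, which total $1$, gives $\mathbb{E}_n^{\{0,k\}}(\log\BB - \log\T) = O\big((\log\log n)^2\log n\big)$; notably, no appeal to the concentration Lemma \ref{lemma_concentration} is needed here, since the per-term bound is already uniform. Substituting this into the Markov bound of the first paragraph yields $\mathbb{P}_n^{\{0,k\}}(\chi_n > \varepsilon) = O\big((\log\log n)^2\log n/(\varepsilon\sigma_n)\big)$, as claimed.
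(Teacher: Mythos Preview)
Your proof is correct and follows essentially the same route as the paper: condition on $\Z=m$, invoke Lemma~\ref{lemma_restriction_to_Z} to reduce to the permutation bound $\mathbb{E}_{{\bf Q}_m}(\log\BB-\log\T)=O\big((\log\log m)^2\log m\big)$ (which the paper cites from \cite{ArratiaTavare1992LimitDiscreteProcess}), use monotonicity in $m\le n$, and apply Markov's inequality. The only cosmetic difference is that the paper applies Markov inside the sum over $m$ and then sums, whereas you sum the conditional means first and apply Markov once to the aggregate expectation; your additional von~Mangoldt sketch of the permutation bound is a nice bonus the paper does not include.
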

\begin{proof}
Let $ {\bf D}=(\log \BB - \log \T)= \sigma_{n}\chi_{n}.$
By the Law of Total Probability we have, for any $\xi$,
\begin{eqnarray}
     {\mathbb P}_n^{\{0,k\}} ({\bf D}  > \xi )
 &=& \sum_{m = 1}^{r}{\mathbb P}_n^{\{0,k\}} (\Z = m) 
     \mathbb{P}_n^{\{0,k\}} ( {\bf D} >\xi | \Z=m) \nonumber \\
 &=& \sum_{m = 1}^{r}{\mathbb P}_n^{\{0,k\}} (\Z = m) 
     {\bf Q}_{m} ( {\bf D} >\xi). \label{plughere}
\end{eqnarray}
Define $L(1)=L(2)=1$, and $L(m)=(\log\log m)^{2}\log m$ for all $m>2$.
It is known (\cite{ArratiaTavare1992LimitDiscreteProcess}, page 333) that there is a positive constant $\kappa$ such that,
for  uniformly random permutations of $[m],$  the expected value of $\log \BB - \log \T$ is  bounded 
above by $\kappa L(m)$. 
We note that $\kappa L(m)$ is a non-decreasing function of $m$.
 It follows from Markov's Inequality (Section 3.4 of \cite{RohatgiSaleh2011ProbabilityStatistics}) that, for any $\xi >0$, and for $1\leq m\leq r$,
\[ {\bf Q}_{m}( {\bf D}  > \xi  )\leq \frac{\kappa L(m)}{\xi }   \leq  \frac{\kappa L(n)}{\xi}.\]
Putting this back into (\ref{plughere}), we get
${\mathbb P}_n^{\{0,k\}} ({\bf D}  > \xi )\leq  \frac{\kappa L(n) }{\xi}.$
In particular, with $\xi=\varepsilon \sigma_{n}$, we have
${\mathbb P}_n^{\{0,k\}} (\chi_{n}   > \varepsilon)\leq   \frac{\kappa L(n) }{\varepsilon \sigma_{n}}$.
\end{proof}

Next  we prove, using  Theorem \ref{theorem_convergence_probability_difference} and a variant of Slutsky's theorem, that
$\BB$ is asymptotically lognormal.

\begin{theorem} \label{theorem_lognormalityB}
Let $k = k(r)$ and $n = n(r)$ be sequences such that $n = kr$ and, for some $0 < \alpha < 1$, $k = o(n^{1 - \alpha})$ as $r$ approaches infinity. 
Let $\mu_n=\frac{1}{2}\log^{2}(\sqrt{n/\lambda})$, $\sigma_n^{2} = \frac{1}{3}\log^{3}(\sqrt{n/\lambda})$. Let $\BB(f)$ denote the product the cycle lengths
 of a mapping $f$ and, for $r \geq 1$, let $Y_n$ be the normalized random variable 
 $Y_n = (\log \BB - \mu_n) / \sigma_n$.  Then, the sequence defined by $Y_n$ converges in distribution to a standard normal distribution. Furthermore
\begin{eqnarray*}
 && \mathbb{P}_n^{\{0,k\}} \left( \log \BB \leq \mu_n 
     + x \sigma_n \right) \\
&=& \phi(x) + O\left(\frac{|x|}{\log^{3/4}(\sqrt{n/\lambda})}\right)
    +O\left(\frac{(\log\log n)^{2}}{\log^{1/4}(n/\lambda)}\right).
\end{eqnarray*}
\end{theorem}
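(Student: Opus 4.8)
The plan is to run a quantitative version of Slutsky's theorem on the decomposition
\[
Y_n = \frac{\log \BB - \mu_n}{\sigma_n} = \frac{\log \T - \mu_n}{\sigma_n} + \frac{\log \BB - \log \T}{\sigma_n} = X_n + \chi_n,
\]
where $X_n$ is governed by Theorem \ref{theorem_lognormalityT} and $\chi_n$ by Theorem \ref{theorem_convergence_probability_difference}. The first ingredient I would record is that $\chi_n \geq 0$: since $\T$ is the least common multiple of the cycle lengths and $\BB$ is their product, $\T$ divides $\BB$, so $\log \BB \geq \log \T$ and hence $\chi_n \geq 0$ for every $\{0,k\}$-mapping. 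This one-sided information is what keeps the sandwich below clean.

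Next I would set up that sandwich. For any $\eta > 0$, the inclusions $\{X_n \leq x - \eta\} \cap \{\chi_n \leq \eta\} \subseteq \{Y_n \leq x\} \subseteq \{X_n \leq x\}$ (the upper one using $\chi_n \geq 0$) give
\[
\mathbb{P}_n^{\{0,k\}}(X_n \leq x - \eta) - \mathbb{P}_n^{\{0,k\}}(\chi_n > \eta) \leq \mathbb{P}_n^{\{0,k\}}(Y_n \leq x) \leq \mathbb{P}_n^{\{0,k\}}(X_n \leq x).
\]
I would then insert Theorem \ref{theorem_lognormalityT} into the two outer probabilities, writing each as $\phi$ at the relevant point plus the error $O\big((|x|+\log^{1/2}(\sqrt{n/\lambda}))/\log^{3/4}(\sqrt{n/\lambda})\big)$; the elementary bound $|\phi(x)-\phi(x-\eta)| \leq \eta$ lets me replace $\phi(x-\eta)$ by $\phi(x)$ at the cost of an $O(\eta)$ term, and $|x-\eta| \leq |x| + \eta$ keeps the $x$-dependence at $O(|x|/\log^{3/4}(\sqrt{n/\lambda}))$. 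For the middle discrepancy I would apply Theorem \ref{theorem_convergence_probability_difference} with $\varepsilon = \eta$, which yields $\mathbb{P}_n^{\{0,k\}}(\chi_n > \eta) = O\big((\log\log n)^2 \log n/(\eta\,\sigma_n)\big)$; since $\sigma_n \asymp \log^{3/2}(n/\lambda)$ and the standing hypothesis forces $\log n = O(\log(n/\lambda))$, this simplifies to $O\big((\log\log n)^2/(\eta\,\log^{1/2}(n/\lambda))\big)$.

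The decisive choice is the free parameter $\eta$. Balancing the $O(\eta)$ term against the $\chi_n$-tail, I would take $\eta = (\log\log n)^2/\log^{1/4}(n/\lambda)$, which tends to $0$ (so the replacement $\phi(x-\eta)\to\phi(x)$ is legitimate) and makes the $O(\eta)$ term exactly $O\big((\log\log n)^2/\log^{1/4}(n/\lambda)\big)$ while driving the $\chi_n$-tail down to $O(\log^{-1/4}(n/\lambda))$. Collecting the surviving terms, and observing that $\log^{1/2}(\sqrt{n/\lambda})/\log^{3/4}(\sqrt{n/\lambda})=O(\log^{-1/4}(n/\lambda))$ is likewise absorbed, the sandwich collapses to
\[
\mathbb{P}_n^{\{0,k\}}(\log \BB \leq \mu_n + x\sigma_n) = \phi(x) + O\!\left(\frac{|x|}{\log^{3/4}(\sqrt{n/\lambda})}\right) + O\!\left(\frac{(\log\log n)^2}{\log^{1/4}(n/\lambda)}\right),
\]
which is the claimed estimate. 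Convergence in distribution follows at once: for each fixed $x$ both error terms vanish as $r \to \infty$, so $\mathbb{P}_n^{\{0,k\}}(Y_n \leq x) \to \phi(x)$ pointwise.

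I expect the main obstacle to be bookkeeping rather than any new idea: one must verify that every error contributed by the two input theorems is genuinely dominated by one of the two target terms. The two places requiring care are tracking the $|x|$-dependence through $\mathbb{P}_n^{\{0,k\}}(X_n \leq x - \eta)$, and invoking the standing assumption $\log n = O(\log(n/\lambda))$ to convert the $\log n$ and $\log\log n$ factors coming from Theorem \ref{theorem_convergence_probability_difference} into the powers of $\log(n/\lambda)$ appearing in the statement. Once these identifications are in place, the optimization of $\eta$ is the only genuine degree of freedom in the argument.
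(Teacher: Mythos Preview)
Your proposal is correct and follows essentially the same route as the paper: the decomposition $Y_n = X_n + \chi_n$ with $\chi_n \ge 0$, the sandwich $\mathbb{P}(X_n \le x-\eta) - \mathbb{P}(\chi_n > \eta) \le \mathbb{P}(Y_n \le x) \le \mathbb{P}(X_n \le x)$, and the appeal to Theorems~\ref{theorem_lognormalityT} and~\ref{theorem_convergence_probability_difference} are exactly what the paper does. The only cosmetic difference is the choice of the free parameter: the paper takes $\varepsilon = \log^{-1/4}(\sqrt{n/\lambda})$ rather than your $\eta = (\log\log n)^{2}/\log^{1/4}(n/\lambda)$, but either choice yields the stated error, since in each case one of the two contributions $O(\eta)$ and $O\big((\log\log n)^{2}/(\eta\log^{1/2}(n/\lambda))\big)$ lands on the target term $O\big((\log\log n)^{2}/\log^{1/4}(n/\lambda)\big)$ while the other is absorbed by it.
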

\begin{proof}
One direction is trivial: $\log \T\leq \log \BB,$ so by Theorem \ref{theorem_lognormalityT},
\begin{equation}
\label{upperT7}
 \mathbb{P}_n^{\{0,k\}} \left( Y_{n} \leq x  \right)\leq \mathbb{P}_n^{\{0,k\}} \left(  X_{n} \leq  x \right)= \phi(x) +  O\left(\frac{|x|+\log^{1/2}(\sqrt{n/\lambda})}{  \log^{3/4}(\sqrt{n/\lambda})}\right).
 \end{equation}

In the other direction, we have for any $\varepsilon >0$,
\begin{eqnarray*} 
 && \mathbb{P}_n^{\{0,k\}}\left(Y_{n} \leq x \right) \\
&\geq & \mathbb{P}_n^{\{0,k\}}\left(Y_{n} \leq x 
    \text{\ and\ } \chi_{n} \leq \varepsilon\right) \\
&=& \mathbb{P}_n^{\{0,k\}} \left( X_{n}+\chi_{n}\leq x  
    \text{\ and\ } \chi_{n} \leq \varepsilon\ \right)\\
&\geq & \mathbb{P}_n^{\{0,k\}} \left( X_{n}+\varepsilon \leq x 
    \text{\ and\ } \chi_{n} \leq \varepsilon\ \right)\\
&\geq & \mathbb{P}_n^{\{0,k\}} 
    \left( X_{n} \leq x- \varepsilon \right)-
    \mathbb{P}_n^{\{0,k\}}\left(\chi_{n} > \varepsilon\right)\\
&=& \phi(x-\varepsilon)+ O\left(\frac{|x-\varepsilon|+\log^{1/2}
    (\sqrt{n/\lambda})}{  \log^{3/4}(\sqrt{n/\lambda})}\right) 
    + O\left(\frac{(\log\log n)^{2}\log n}
    {\varepsilon \sigma_{n}} \right),
\end{eqnarray*}
where in the last step we used Theorem \ref{theorem_lognormalityT} and Theorem \ref{theorem_convergence_probability_difference}. Finally, choose 
$\varepsilon=\frac{1}{\log^{1/4}(\sqrt{n/\lambda})}$, and use 
the mean value theorem to write $\phi(x-\varepsilon)=\phi(x)
+O(\varepsilon)$. Then,
\begin{eqnarray*}
 && {\mathbb P}_n^{\{0,k\}} \left( Y_{n} \leq x  \right) \\
&\geq & \phi(x)+O(\varepsilon)+  
     O\left(\frac{|x|+\log^{1/2}(\sqrt{n/\lambda})}
     {\log^{3/4}(\sqrt{n/\lambda})}\right) 
     +O\left(\frac{(\log\log n)^{2}\log n}
     {\varepsilon \sigma_{n}}\right)\\
&=& \phi(x)+ O\left(\frac{|x|}{  \log^{3/4}(\sqrt{n/\lambda})}\right) 
    +O\left(\frac{(\log\log n)^{2}}{\log^{1/4}(n/\lambda)}\right).
\end{eqnarray*}
\end{proof}

\section{Heuristics} \label{section_heuristics}

In the analysis of his rho factorization method \cite{pollard1975factorization}, Pollard conjectured that quadratic polynomials modulo large primes behave like random mappings with respect to their average rho length. 
However, it should be noted that the indegree distribution of a class of mappings impacts the asymptotic distribution of a number of parameters \cite{ArneyBender1982mappings}; the indegree distribution of a mapping $f$ on $n$ nodes is defined as the sequence $n_j = \# \{ y \in [n] \colon |f^{-1}(y)| = j \}$, $j \geq 0$. 
Since a quadratic polynomial modulo an odd prime $p$ has a very particular indegree distribution, namely $(n_0, n_1, n_2) = (\frac{p-1}{2}, 1, \frac{p-1}{2})$, one might wonder if $\{0,2\}$-mappings do not represent a better heuristic model.
Furthermore, there are classes of polynomials from which one might not expect the typical random mapping behavior, and it is possible to use different classes of mappings as heuristic models. 
This is the case for the polynomials of the form $f(x) = x^d + a \in \mathbb{F}_q[x]$, where, as usual, $\mathbb{F}_q$ denotes the finite field on $q$ elements.
Their indegree distribution satisfies
\begin{equation} \label{0k_polynomials}
n_0 = \left( 1 - \frac{1}{k} \right) (q-1), \quad n_1 = 1, \quad n_k = \frac{1}{k}(q-1),
\end{equation}
where $k = \gcd(q-1, d)$.
We note that the indegree distribution of these polynomials satisfies $n_0/q \sim 1 - 1/k$, $n_1/q = o(1)$ and $n_k/q \sim 1/k$ as $q$ approaches infinity.
We refer to the polynomials with indegree distribution (\ref{0k_polynomials}) as \emph{$\{0,k\}$-polynomials}.
As a particular case, we note that a polynomial of the form $x^k + a \in \mathbb{F}_p[x]$, $p \equiv 1 \pmod k$, is a $\{0,k\}$-polynomial.

The interest in the heuristic approximation mentioned above can be attributed at least in part to the wealth of asymptotic results on the statistics of mappings with indegree restrictions, when compared to the literature on the number theoretical setting; see for example \cite{ArneyBender1982mappings,DrmotaSoria1997RandomMappings}.
The main term of several asymptotic results on the statistics of a class $\cal F$ of mappings with restrictions on the indegrees depends on its asymptotic average coalescence $\lambda = \lambda({\cal F})$, defined as in Section \ref{section_preliminary_results}. This is the case for the rho length of a random node, parameter involved in the analysis of Pollard factorization algorithm.
Since $\lambda = 1$ for unrestricted mappings and $\{0,2\}$-mappings, these two classes represent equally accurate models for the average rho length of quadratic polynomials \cite{MartinsPanario2015heuristic}. 
It is curious that the knowledge of the indegree distribution of these polynomials does not represent an improvement on the heuristic in this case. 
It is worth noting that our asymptotic results on different classes of $\{0,k\}$-mappings are determined by their coalescence $\lambda$ as well; compare Theorems \ref{theorem_T} and \ref{theorem_BB_0,k} with Equations (\ref{schmutz_estimate_T}) and (\ref{schmutz_estimate_B}). Compare $\mu_n$ and $\mu_m^*$ with $\sigma_n$ and $\sigma_m^*$ as well, under the light of the fact that the expected number of cyclic nodes over all unrestricted or $\{0,k\}$-mappings are asymptotically equivalent to $\sqrt{\pi n/2}$ and $\sqrt{\pi n/2 \lambda}$, respectively. We note that if $\log k = o(\log n)$ then $\mu_n \sim \mu_n^*$ and $\sigma_n \sim \sigma_n^*$ as $r$ approaches infinity.
%

In this section we consider classes of $\{0,k\}$-mappings, treated in the previous sections, as heuristic models for $\{0,k\}$-polynomials. 
Our focus lies on polynomials of a certain degree modulo large prime numbers, hence from this point on we restrict our attention to $\{0,k\}$-mappings with $k \geq 2$ fixed, even though the results of the previous sections hold in a more general setting. The asymptotic results in this section are taken as $n$ approaches infinity.

\subsection{Sampling $\{0,k\}$-Mappings} \label{subsection_sampling_mappings}

In our experiments, for each prime number $p \equiv 1 \pmod k$ considered, we select $p$ $\{0,k\}$-mappings on $n = p-1$ nodes uniformly at random according to the following algorithm. For $f$ a $\{0,k\}$-mapping, let ${\cal N}_k = \{ y \in [n] \colon | f^{-1}(y) | = k \}$. We note that $|{\cal N}_k| = r$.
We determine the set ${\cal N}_k$ by selecting a permutation $\sigma = \sigma_1 \cdots \sigma_n \in S_n$ uniformly at random and defining ${\cal N}_k = \{ \sigma_1, \dots, \sigma_r \}$. The image $f(x)$ of every element $x \in [n]$ is defined by choosing again a permutation $\tau = \tau_1 \cdots \tau_n \in S_n$ uniformly at random. The first $k$ elements define the preimage of $\sigma_1$: $f^{-1}(\sigma_1) = \{ \tau_1, \dots, \tau_k \}$. The next $k$ elements determine the preimage of the element of $\sigma_2$, and so on. We make this process precise in the algorithm below.

\begin{algorithm}[H]
	\DontPrintSemicolon
   \SetAlgoLined
   \KwIn{Integers $r \geq 1$ and $k \geq 2$.} 
   \KwOut{$\{0,k\}$-mapping $f$ on $n = kr$ nodes.}
   Pick a permutation $\sigma = \sigma_1 \cdots \sigma_n \in S_n$ uniformly at random. 
   
   Pick a permutation $\tau = \tau_1 \cdots \tau_n \in S_n$ uniformly at random. \\
   \For{$i = 0, \dots, r-1$}{
   \For{$j = 1, \dots, k$}{$f( \tau[ik + j] ) = \sigma[i+1]$ 
   \tcp*[r]{\small $\tau[\ell]$ denotes $\tau_\ell$, same for $\sigma[\ell]$.}
   } 
   }
   \Return{$f$.}
   \caption{\textsc{Generating a random uniform $\{0,k\}$-mapping.}}
\end{algorithm}

\begin{theorem}
Assume that the permutations $\sigma, \tau$ in Steps 1 and 2 of Algorithm 1 are uniform random permutation of $S_n$. Then Algorithm 1 returns a uniform random $\{0,k\}$-mapping.
\end{theorem}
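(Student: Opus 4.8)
The plan is to recognize this as a pure counting statement. Algorithm 1 defines a map $\Theta\colon S_n \times S_n \to \Omega_n$, $(\sigma,\tau) \mapsto f$, and the claim is equivalent to saying that every fiber $\Theta^{-1}(f)$ over a $\{0,k\}$-mapping $f$ has the same cardinality. Indeed, once all fibers are shown to be equinumerous, the fact that $(\sigma,\tau)$ is uniform on $S_n\times S_n$ forces its pushforward under $\Theta$ to be uniform on the image. So the whole proof reduces to (i) identifying the image of $\Theta$ and (ii) computing $|\Theta^{-1}(f)|$.

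First I would verify that $\Theta$ actually takes values in the set of $\{0,k\}$-mappings, and that every such mapping is attained. The range of the output is $\{\sigma_1,\dots,\sigma_r\}$, a set of exactly $r$ elements, and the preimage of $\sigma_{i+1}$ is the block $\{\tau_{ik+1},\dots,\tau_{(i+1)k}\}$. Because $\tau$ is a permutation, these $r$ blocks are pairwise disjoint, each has exactly $k$ elements, and together they exhaust $[n]=[kr]$; hence every node has indegree $0$ or $k$, so $\Theta(\sigma,\tau)$ is genuinely a $\{0,k\}$-mapping. Surjectivity will come for free from the fiber count below, since that count is positive for every $f$.

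Next I would fix an arbitrary $\{0,k\}$-mapping $f$ with range $R=f([n])$, $|R|=r$, and count the pairs with $\Theta(\sigma,\tau)=f$. The condition unwinds into two requirements: (a) $\{\sigma_1,\dots,\sigma_r\}=R$ as sets, and (b) for each $i$ the $i$-th block of $\tau$ equals the set $f^{-1}(\sigma_{i+1})$. For the admissible $\sigma$: the first $r$ entries list $R$ in any of $r!$ orders, while the tail $\sigma_{r+1},\dots,\sigma_n$ does not influence the output and ranges freely over the $(n-r)!$ orderings of $[n]\setminus R$. For the admissible $\tau$ given such a $\sigma$: condition (b) fixes the unordered content of each of the $r$ blocks, and permuting the $k$ entries inside a block leaves $f$ unchanged, contributing a factor $k!$ per block. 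Hence $|\Theta^{-1}(f)| = r!\,(n-r)!\,(k!)^r$.

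The step most prone to error — and the one I would write out with care — is this bookkeeping of the degrees of freedom that do \emph{not} affect the output: the tail of $\sigma$ and the internal orderings of the $\tau$-blocks. The essential observation is that the resulting count $r!\,(n-r)!\,(k!)^r$ is independent of $f$. Combined with surjectivity onto the $\{0,k\}$-mappings, equinumerous fibers give uniformity of the output. As a consistency check one can confirm the global identity $(n!)^2 = r!\,(n-r)!\,(k!)^r \cdot N$, where $N=\binom{n}{r}\,n!/(k!)^r$ is the total number of $\{0,k\}$-mappings on $n$ nodes, which reassures that the fibers tile $S_n\times S_n$ exactly.
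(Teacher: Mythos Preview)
Your proof is correct and follows essentially the same approach as the paper: both compute, for an arbitrary $\{0,k\}$-mapping $f$, the number (equivalently, probability) of pairs $(\sigma,\tau)$ producing $f$ and observe that the resulting quantity $r!\,(n-r)!\,(k!)^r/(n!)^2$ is independent of $f$. Your fiber-counting formulation is slightly more explicit---you verify that the image of $\Theta$ is exactly the set of $\{0,k\}$-mappings and include the consistency check---but the decomposition into the $r!$ orderings of the range, the $(n-r)!$ irrelevant tail entries of $\sigma$, and the $(k!)^r$ intra-block permutations of $\tau$ is identical to the paper's.
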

\begin{proof}
Let $f$ be a $\{0,k\}$-mapping on $n$ nodes and ${\cal N}_k$ as above. We note that the probability that Step 1 returns a given permutation $\sigma \in S_n$ is $1/n!$. Also, the number of permutation that define the same set ${\cal N}_k = \{ \sigma_1, \dots, \sigma_r \}$ is given by the number of permutations of 
$\{ \sigma_1, \dots, \sigma_r \}$ times the number of permutations of $\{ \sigma_{r+1}, \dots, \sigma_n \}$. Hence the probability $p_{ {\cal N}_k }$ that the set ${\cal N}_k$ is chosen in Step 1 is
\begin{equation} \label{Nk}
p_{ {\cal N}_k } = \frac{ r! (n-r)! }{n!}.
\end{equation}
Again, the probability that a given permutation is chosen in Step 2 is $1/n!$. Moreover, the number of permutations that define the same \emph{sequence of sets} ${\cal A}_i = f^{-1}(\sigma_i)$, $i = 1, \dots, r$, equals the product of the number of permutations on each $A_i$. Therefore, the probability $p_{ (A_1, \dots, A_r) }$ that $(A_1, \dots, A_r)$ is chosen satisfies
\begin{equation} \label{A1_Ar}
p_{ (A_1, \dots, A_r) } = \frac{ k! \cdots k!  }{n!} = \frac{ (k!)^r }{n!}.
\end{equation}
It follows from Equations (\ref{Nk}) and (\ref{A1_Ar}) that the probability that $f$ is returned by Algorithm 1 is $p_{ {\cal N}_k }\cdot p_{ (A_1, \dots, A_r) }= r! (n-r)! (k!)^r /(n!)^2$ which does not depend on $f$.
\hfill
\end{proof}

We discuss next the problems that can occur in the numerical estimate of the expectation of a random variable by sampling. 
To simplify notation, let $\vec{S}= f_1,f_2,f_3, \dots$ denote a sequence of independent random samples chosen uniformly at random from the class of $\{0,k\}$-mappings on $n$ nodes. 
We consider the sequence of numbers defined by $\xi = \xi(n) = \left( {\mathbb E}^{ \{0,k\} }_n(\T) \right)^a$, where $a$ depends on $n$ as well.
We define
$$\NN = \NN(n,\vec{S},a)=\min\lbrace  t: \T(f_{t}) \geq \xi \rbrace.$$
Thus $\NN$  has a geometric distribution: for $j \geq 1$ we have 
\begin{equation} \label{distribution_N}
\begin{array}{rcl}
\mathbb{P}_n^{ \{0,k\} }(\NN = j) & = & \mathbb{P}^{ \{0,k\} }_n(\T \geq \xi) \cdot \mathbb{P}^{ \{0,k\} }_n(\T < \xi)^{j-1}, \\[5pt]
{\mathbb E}_n^{ \{0,k\} }(\NN) & = & \displaystyle \frac{1}{\mathbb{P}_n(\T \geq \xi)}.
\end{array}
\end{equation}
We note that if $a = \log^{-1/4} n$, then the ratio between $\xi$ and ${\mathbb E}^{ \{0,k\} }_n(\T)$ approaches zero as $n \to \infty$. We prove in Theorem \ref{theorem_first_hit_T} that this particular choice of $a$ defines a random variable $\NN$ whose expectation has exponential growth. We remember that in this section we assume that $k\geq 2$ is a fixed integer.

\begin{lemma} \label{lemma_applyLandau}
Let $f$ be a $\{0,k\}$-mapping on $n = kr$ nodes and let $\alpha=\frac{n^{2/3}}{\log^{3} n}$. If $a^{-1} \log^{-1/3} n = o(1)$ then, for sufficiently large $n$, $\Z(f) \leq \alpha$ implies $\T(f) < \xi$.
\end{lemma}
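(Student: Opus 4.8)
The plan is to bound $\T(f)$ above by relating it to the order of the permutation $\sigma_f$ obtained by restricting $f$ to its cyclic nodes. Since $\T(f)$ equals the order of this permutation of $\Z(f)$ elements, and since the order of any permutation of $m$ symbols is at most Landau's function $g(m) = \max_{\pi \in S_m} \mathrm{ord}(\pi)$, I would first observe that $\T(f) \leq g(\Z(f))$. By hypothesis $\Z(f) \leq \alpha = n^{2/3}/\log^3 n$, and since Landau's function is monotone increasing, this gives $\T(f) \leq g(\lfloor \alpha \rfloor)$. The key analytic input is the classical asymptotic estimate $\log g(m) \sim \sqrt{m \log m}$ as $m \to \infty$ (Landau's theorem), which I would invoke to control the right-hand side.

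Next I would substitute the bound on $\alpha$ into Landau's estimate. Writing $\log g(\alpha) = \sqrt{\alpha \log \alpha}\,(1+o(1))$, and noting that $\log \alpha = \frac{2}{3}\log n\,(1+o(1))$ while $\alpha^{1/2} = n^{1/3}/\log^{3/2} n$, I would compute
\begin{equation}
\log g(\alpha) = \sqrt{ \frac{n^{2/3}}{\log^3 n} \cdot \frac{2}{3}\log n }\,(1+o(1)) = \sqrt{\tfrac{2}{3}}\,\frac{n^{1/3}}{\log n}\,(1+o(1)).
\label{landau_bound}
\end{equation}
The point of this computation is that $\log \T(f)$ is at most of order $n^{1/3}/\log n$, which is genuinely smaller than the order of growth of $\log \xi$ when $a$ is chosen appropriately.

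To close the argument I would estimate $\log \xi = a \log \mathbb{E}_n^{\{0,k\}}(\T)$. By Theorem \ref{theorem_T} (or its fixed-$k$ version, Corollary \ref{corollary_T_fixed_k}), we have $\log \mathbb{E}_n^{\{0,k\}}(\T) = k_0 (n/\lambda)^{1/3}/\log^{2/3}(n/\lambda)\,(1+o(1))$, which for fixed $k$ is of order $n^{1/3}/\log^{2/3} n$. Hence $\log \xi$ is of order $a\, n^{1/3}/\log^{2/3} n$. Comparing with \eqref{landau_bound}, the inequality $\T(f) < \xi$ will follow once
$$\frac{n^{1/3}}{\log n} = o\!\left( a\,\frac{n^{1/3}}{\log^{2/3} n} \right),$$
which simplifies exactly to $a^{-1}\log^{-1/3} n = o(1)$, the stated hypothesis. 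Thus for sufficiently large $n$ the bound $\log \T(f) < \log \xi$ holds, giving $\T(f) < \xi$.

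The main obstacle I anticipate is making the comparison of exponents fully rigorous rather than merely matching leading orders: the $(1+o(1))$ factors in Landau's estimate and in Theorem \ref{theorem_T} must be tracked carefully, and one must verify that the hypothesis $a^{-1}\log^{-1/3} n = o(1)$ provides enough of a gap to absorb them for all large $n$. A secondary point requiring care is the monotonicity step $\T(f) \leq g(\lfloor \alpha \rfloor)$, which relies on $\T(f)$ being literally the order of a permutation on $\Z(f)$ symbols — this is exactly the characterization of $\T$ stated in the introduction, so I would cite that identification explicitly before invoking Landau's function.
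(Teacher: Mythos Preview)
Your proposal is correct and follows essentially the same route as the paper: bound $\T(f)$ by Landau's function at $m=\Z(f)\le\alpha$, use $\log g(m)\sim\sqrt{m\log m}$ to get $\log\T(f)=O(n^{1/3}/\log n)$, and compare with $\log\xi\asymp a\,n^{1/3}/\log^{2/3}n$ via Theorem~\ref{theorem_T}, so that the hypothesis $a^{-1}\log^{-1/3}n=o(1)$ is exactly what closes the gap. The paper handles your anticipated obstacle about the $(1+o(1))$ factors in the simplest possible way: it replaces the asymptotic $\log g(m)=\sqrt{m\log m}(1+o(1))$ by the crude inequality $\T(f)<\exp(2\sqrt{m\log m})$ for large $n$, and the extra factor of~$2$ is harmlessly absorbed by the $o(1)$ comparison at the end.
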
  
\begin{proof}
If $f: [n] \to [n]$ is a $\{0,k\}$-mapping and $\Z (f) =m$, then clearly  $\T(f) \leq \max\limits_{\sigma \in S_{m} }\T (\sigma)$.
It follows from Landau's theorem \cite{landau1909handbuch, szalay1980MaximalOrder} that 
$$\max\limits_{\sigma \in S_{m} }\T (\sigma)= \exp \left( \sqrt{m\log m}(1+o(1)) \right),$$
hence, for sufficiently large $n$,
\begin{equation} \label{max_T_first_hit}
\T(f) < \exp \left( 2 \sqrt{m\log m} \right). 
\end{equation}
We note that, if $m\leq \alpha$, then 
$$m \log m \leq \alpha \log \alpha = \frac{ n^{2/3} }{ \log^3 n } \left( \frac{2}{3} \log n - \log \log^3 n \right)
\leq \frac{2}{3} \frac{ n^{2/3} }{ \log^2 n }.$$
Therefore,
\begin{equation} \label{final_estimate_first_hit}
2 \sqrt{m\log m} \leq 2 \frac{n^{1/3}}{\log n},
\end{equation}
where Theorem \ref{theorem_T} and the definition of $\xi$ and $\alpha$ imply
$$\frac{ 2 n^{1/3} \log^{-1} n }{\log \xi} = O \left( a^{-1} \log^{-1/3} n \right).$$
If $a^{-1} \log^{-1/3} n = o(1)$, then $\exp \left( 2 n^{1/3} \log^{-1} n \right) < \xi$ for sufficiently large $n$.
The result follows from Equations (\ref{max_T_first_hit}) and (\ref{final_estimate_first_hit}).
\hfill
\end{proof}

\begin{theorem} \label{theorem_first_hit_T}
If $a^{-1} \log^{-1/3} n = o(1)$ then, for sufficiently large $n$, we have  
$${\mathbb E}_n^{ \{0,k\} }(\NN) > \exp \left( \frac{ \lambda  n^{1/3}}{3\log ^{6}n } \right),$$
and, in addition,
$${\mathbb P}_n^{ \{0,k\} } \left( \NN \leq \exp \left( \frac{ \lambda n^{1/3}}{4\log ^{6}n } \right) \right) \leq \exp \left( -\frac{ \lambda  n^{1/3}}{12\log ^{6}n } \right).$$
\end{theorem}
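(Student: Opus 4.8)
The plan is to exploit the fact, recorded in Equation (\ref{distribution_N}), that $\NN$ is geometric, so that ${\mathbb E}_n^{\{0,k\}}(\NN) = 1/\mathbb{P}_n^{\{0,k\}}(\T \geq \xi)$ and $\mathbb{P}_n^{\{0,k\}}(\NN \leq j) = 1 - (1 - p)^j$ with $p = \mathbb{P}_n^{\{0,k\}}(\T \geq \xi)$. Both assertions will follow from a single upper bound on the success probability $p$, so the whole proof reduces to showing that $p \leq \exp\bigl(-\tfrac{\lambda n^{1/3}}{3\log^6 n}\bigr)$ for all sufficiently large $n$.

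To bound $p$, first I would pass from $\T$ to $\Z$ using the contrapositive of Lemma \ref{lemma_applyLandau}: under the hypothesis $a^{-1}\log^{-1/3}n = o(1)$, the event $\T(f) \geq \xi$ forces $\Z(f) > \alpha$ with $\alpha = n^{2/3}/\log^3 n$, whence $p \leq \mathbb{P}_n^{\{0,k\}}(\Z > \alpha)$. The key observation is that $\alpha$ sits deep in the upper tail of $\Z$: since $k$ is fixed and the mode satisfies $m_{\#} = \sqrt{n/\lambda} + O(1)$ by Lemma \ref{lemma_unimodal}, we have $\alpha/m_{\#} \to \infty$, so $\alpha > m_{\#}$ for large $n$ and the probabilities $\mathbb{P}_n^{\{0,k\}}(\Z = m)$ are decreasing for $m \geq \alpha$. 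Exactly as in the upper-tail estimate of Lemma \ref{lemma_concentration}, monotonicity together with $\Z \leq r = n/k$ gives $\mathbb{P}_n^{\{0,k\}}(\Z > \alpha) \leq \tfrac{n}{k}\,\mathbb{P}_n^{\{0,k\}}(\Z = \lfloor \alpha \rfloor)$.

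Now I would apply Lemma \ref{lemma_asymptotic_distribution_Z_0,k} at $m = \lfloor \alpha \rfloor$, which is legitimate because $\alpha \to \infty$ and $\alpha = o(r)$ (indeed $\alpha/r = k/(n^{1/3}\log^3 n) \to 0$). The dominant contribution is the Gaussian exponent $-\tfrac{\lambda \alpha^2}{2n} = -\tfrac{\lambda n^{1/3}}{2\log^6 n}$, the error term $\alpha^3/r^2 = k^2/\log^9 n$ is $o(1)$, and the algebraic prefactor $\tfrac{n}{k}\cdot\tfrac{\lambda\alpha}{n}$ is only polynomial in $n$ and hence contributes $\exp(O(\log n))$, negligible against $n^{1/3}/\log^6 n$. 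Collecting these gives $p \leq \mathbb{P}_n^{\{0,k\}}(\Z > \alpha) \leq \exp\bigl(-\tfrac{\lambda n^{1/3}}{2\log^6 n}(1 + o(1))\bigr)$, and since $\tfrac12 > \tfrac13$ this lies below $\exp\bigl(-\tfrac{\lambda n^{1/3}}{3\log^6 n}\bigr)$ for large $n$. Taking reciprocals yields the first inequality ${\mathbb E}_n^{\{0,k\}}(\NN) > \exp\bigl(\tfrac{\lambda n^{1/3}}{3\log^6 n}\bigr)$.

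For the tail bound on $\NN$, I would invoke Bernoulli's inequality in the form $\mathbb{P}_n^{\{0,k\}}(\NN \leq j) = 1 - (1 - p)^j \leq jp$. Taking $j = \exp\bigl(\tfrac{\lambda n^{1/3}}{4\log^6 n}\bigr)$ and the bound $p < \exp\bigl(-\tfrac{\lambda n^{1/3}}{3\log^6 n}\bigr)$ just established, the product $jp$ is at most $\exp\bigl((\tfrac14 - \tfrac13)\tfrac{\lambda n^{1/3}}{\log^6 n}\bigr) = \exp\bigl(-\tfrac{\lambda n^{1/3}}{12\log^6 n}\bigr)$, which is the claimed estimate. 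The main obstacle is the tail estimate of the middle two steps: one must confirm both that $\alpha$ lies past the mode so that monotonicity applies and that every error term and prefactor in Lemma \ref{lemma_asymptotic_distribution_Z_0,k} is genuinely swallowed by the main exponent. Once the constant $\tfrac12$ is secured in that exponent, the arithmetic $\tfrac14 - \tfrac13 = -\tfrac{1}{12}$ closing out the second part is routine.
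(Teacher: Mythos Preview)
Your proposal is correct and follows essentially the same route as the paper: reduce to a bound on $p=\mathbb{P}_n^{\{0,k\}}(\T\geq\xi)$ via Lemma~\ref{lemma_applyLandau}, use unimodality of $\Z$ past the mode to bound the upper tail by a multiple of $\mathbb{P}_n^{\{0,k\}}(\Z=\lfloor\alpha\rfloor)$, apply Lemma~\ref{lemma_asymptotic_distribution_Z_0,k} to extract the exponent $-\tfrac{\lambda n^{1/3}}{2\log^6 n}$, and finish the second part with $\mathbb{P}(\NN\leq s)\leq sp$. The only cosmetic differences are that the paper uses the cruder prefactor $n$ rather than your $n/k$ in the monotonicity step, and sums the geometric pmf directly rather than invoking Bernoulli's inequality; neither affects the argument.
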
 
\begin{proof}
Let $\alpha=\frac{n^{2/3}}{\log^{3} n}$. 
Using first the Law of Total Probability we obtain
\begin{eqnarray*}
    {\mathbb P}_{n}^{ \{0,k\} }(\T \geq \xi) 
&=& {\mathbb P}_{n}^{ \{0,k\} }(\T \geq \xi | \Z\leq \alpha ) 
    \cdot {\mathbb P}_{n}^{ \{0,k\} }(\Z\leq \alpha ) + \\
 && {\mathbb P}_{n}^{ \{0,k\} }(\T \geq \xi | \Z> \alpha ) 
    \cdot {\mathbb P}_{n}^{ \{0,k\} }(\Z > \alpha ).
\end{eqnarray*}
We note that Lemma \ref{lemma_applyLandau} implies ${\mathbb P}_{n}^{ \{0,k\} }(\T \geq \xi | \Z \leq \alpha ) = 0$ for $r$ large enough. Since ${\mathbb P}_{n}^{ \{0,k\} }(\T \geq \xi | \Z> \alpha ) \leq 1$, it follows that 
\begin{equation} \label{bound_p_tail}
\mathbb{P}_n^{ \{0,k\} }(\T \geq \xi) \leq {\mathbb P}_{n}^{ \{0,k\} }(\Z > \alpha ).
\end{equation}
We have by Lemma \ref{lemma_unimodal} that $\alpha$ is greater than the mode $m_\#$ of $\Z$, thus using Equation (\ref{bound_p_tail}) one obtains 
$$\mathbb{P}_n^{ \{0,k\} }(\T \geq \xi) \leq \sum_{m = \lceil \alpha \rceil}^n {\mathbb P}_{n}^{ \{0,k\} }(\Z = m) \leq 
\sum_{m = \lfloor \alpha \rfloor}^n {\mathbb P}_{n}^{ \{0,k\} }(\Z = \lfloor \alpha \rfloor),$$
hence,
\begin{equation} \label{bound_p}
\mathbb{P}_n^{ \{0,k\} }(\T \geq \xi) \leq n {\mathbb P}_{n}^{ \{0,k\} }(\Z=\lfloor \alpha \rfloor ).
\end{equation}
We note that Lemma \ref{lemma_asymptotic_distribution_Z_0,k} implies
\begin{align*}
n {\mathbb P}_{n}^{ \{0,k\} }(\Z = \lfloor \alpha \rfloor) 
& = 
\lambda \frac{ n^{2/3} }{ \log^3 n } \exp \left( - \frac{\lambda}{2n} \frac{ n^{4/3} }{ \log^6 n } + 
O \left( \frac{1}{n^2} \frac{ n^2 }{ \log^9 n } \right) + o(1) \right) \\
& =
\exp \left( - \frac{ \lambda n^{1/3} }{ 2 \log^6 n } + \log \lambda + \frac{2}{3} \log n - \log \log^3 n + o(1) \right),
\end{align*}
hence, for sufficiently large $n$,
\begin{equation} \label{bound_n_times_tail_of_Z}
n {\mathbb P}_{n}^{ \{0,k\} }(\Z = \lfloor \alpha \rfloor) < 
\exp \left( - \frac{ \lambda n^{1/3} }{ 3 \log^6 n } \right).
\end{equation}
The bound for $\mathbb{E}_n^{ \{0,k\} }(\NN)$ follows at once from Equations (\ref{distribution_N}), (\ref{bound_p}) and (\ref{bound_n_times_tail_of_Z}).

We note that, for any positive integer $s$, Equation (\ref{distribution_N}) implies
$${\mathbb P}_{n}^{ \{0,k\} }(\NN \leq s) = \sum_{1 \leq j \leq s} {\mathbb P}_{n}^{ \{0,k\} }(\NN = j) \leq 
\sum_{1 \leq j \leq s} \mathbb{P}_n^{ \{0,k\} }(\T \geq \xi) \mathbb{P}_n^{ \{0,k\} }(\T < \xi)^{j-1},$$
so ${\mathbb P}_{n}^{ \{0,k\} }(\NN \leq s) \leq s \mathbb{P}_n^{ \{0,k\} }(\T \geq \xi)$.
In particular, if $s = \exp \left( \frac{ \lambda n^{1/3}}{4\log ^{6}n } \right)$, then Equations (\ref{bound_p}) and (\ref{bound_n_times_tail_of_Z}) imply
$${\mathbb P}_{n}^{ \{0,k\} } \left( \NN \leq \exp \left( \frac{ \lambda n^{1/3} }{ 4\log ^{6}n } \right) \right) \leq  
\exp \left( \left( \frac{1}{4} - \frac{1}{3} \right) \frac{ \lambda n^{1/3} }{\log^6 n} \right) =
\exp \left( - \frac{ \lambda n^{1/3} }{12 \log^6 n} \right),$$
as desired.
\hfill
\end{proof}

Let $\widetilde{\xi} = \left( \mathbb{E}_n^{ \{0,k\} }(\BB) \right)^b$ where $b = b(n)$.
We prove, as a particular case of Theorem \ref{theorem_first_hit_B} below, that if $b = \log^{-1} n$ then the expected number of random mappings $f$ sampled before encountering one such that $\BB(f) \geq \widetilde{\xi}$ is exponentially large.
As before, we consider a sequence $\vec{S}= f_1,f_2,f_3, \dots$ of $\{0,k\}$-mappings on $n$ nodes chosen independently and uniformly at random. We estimate the asymptotic behavior of the random variable $\widetilde{\NN} = \min\lbrace t: \BB (f_{t}) \geq \widetilde{\xi} \rbrace$.

\begin{lemma} \label{lemma_CboundCor}
Let $f$ be a $\{0,k\}$-mapping on $n = kr$ nodes and let $\widetilde{\alpha} = \frac{ n^{1/3} }{ \log^{3} n }$. If $b^{-1} \log^{-2} n = o(1)$ then, for sufficiently large $n$, $\CC(f) < \widetilde{\alpha}$ implies $\BB(f) < \widetilde{\xi}$.
\end{lemma}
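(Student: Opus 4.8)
The plan is to follow the template of the proof of Lemma \ref{lemma_applyLandau}, but with the bound on the order of a permutation (Landau's theorem) replaced by an elementary bound on the product of its cycle lengths. Write $m = \Z(f)$ for the number of cyclic nodes and let $\ell_1, \dots, \ell_{\CC(f)}$ be the lengths of the $\CC(f)$ cycles of $f$, so that $\ell_1 + \cdots + \ell_{\CC(f)} = m$ and $\BB(f) = \ell_1 \cdots \ell_{\CC(f)}$.

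The key step is the arithmetic--geometric mean inequality applied to the cycle lengths: from $(\ell_1 \cdots \ell_{\CC(f)})^{1/\CC(f)} \leq m/\CC(f)$ one obtains, after taking logarithms,
\[
\log \BB(f) \leq \CC(f) \, \log \frac{m}{\CC(f)}.
\]
Since every cyclic node has indegree $k$ we have $m \leq r \leq n$, and as $\CC(f) \geq 1$ the ratio $m/\CC(f)$ lies in $[1,n]$; hence $\log(m/\CC(f)) \leq \log n$ and $\log \BB(f) \leq \CC(f) \log n$. In particular the hypothesis $\CC(f) < \widetilde{\alpha} = n^{1/3}/\log^3 n$ forces $\log \BB(f) < n^{1/3}/\log^2 n$.

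It then remains to compare this with $\log \widetilde{\xi} = b \log \mathbb{E}_n^{\{0,k\}}(\BB)$. By Theorem \ref{theorem_BB_0,k} (applied with $k$ fixed),
\[
\log \widetilde{\xi} = \frac{3}{2}\, b \left( \frac{n}{\lambda} \right)^{1/3} (1 + o(1)),
\]
and since $\lambda = k-1$ is a constant, the hypothesis $b^{-1}\log^{-2} n = o(1)$ is exactly what guarantees $n^{1/3}/\log^2 n = o(\log \widetilde{\xi})$. Hence $\log \BB(f) < \log \widetilde{\xi}$ for all sufficiently large $n$, that is, $\BB(f) < \widetilde{\xi}$.

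I expect the only real content to be the arithmetic--geometric mean step: it is what turns control of the \emph{number} of cycles into control of the \emph{product} of their lengths, and the extra factor $\log n$ it introduces is exactly why the hypothesis is stated with $\log^{-2} n$ rather than the weaker $\log^{-3} n$ that the cruder estimate $\log \BB(f) \leq \Z(f)$ would demand. The growth-rate comparison afterward is routine once Theorem \ref{theorem_BB_0,k} is in hand.
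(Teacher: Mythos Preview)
Your proof is correct and follows essentially the same route as the paper: both arguments rest on the arithmetic--geometric mean bound $\BB \leq (m/\CC)^{\CC}$ applied to the cycle lengths, followed by a comparison with $\log\widetilde{\xi}$ via Theorem \ref{theorem_BB_0,k}. The only difference is cosmetic: the paper bounds $(n/\CC)^{\CC}$ by $(n/\widetilde{\alpha})^{\widetilde{\alpha}}$ using monotonicity of $C\mapsto (n/C)^{C}$ on $[1,\widetilde{\alpha}]$, whereas you bound $\log(m/\CC)\leq \log n$ directly and then use $\CC<\widetilde{\alpha}$; both yield $\log\BB = O(n^{1/3}/\log^{2}n)$, and your version avoids checking the monotonicity. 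Your closing remark about the ``cruder estimate $\log\BB(f)\leq\Z(f)$'' is a bit off, since the hypothesis bounds $\CC$ rather than $\Z$, but this is commentary and does not affect the argument.
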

\begin{proof}
If $\Z(f) = m$, then $\BB$ is the product of $\CC$ positive numbers whose sum is $m$.
Hence $\BB \leq (m/\CC)^{\CC} \leq (n/\CC)^{\CC}.$ The function $C \mapsto (n/C)^{C}$ is increasing for $C\leq \widetilde{\alpha}$. Therefore, when $\CC < \widetilde{\alpha}$, we have for all sufficiently large $n$ that
\begin{equation} \label{final_estimate_first_hit_B}
\BB < \left( n/ \widetilde{\alpha} \right)^{\widetilde{\alpha}} = \exp \left( \widetilde{\alpha} \log (n/ \widetilde{\alpha}) \right).
\end{equation}
It follows from Theorem \ref{theorem_BB_0,k} and the definition of $\widetilde{\xi}$ and $\widetilde{\alpha}$ that
$$\frac{ \widetilde{\alpha} \log (n/ \widetilde{\alpha}) }{ \log \widetilde{\xi} } = 
O \left( \frac{ n^{1/3} }{ \log^{3} n } \log \left( \frac{ n^{1/3} }{ \log^{3} n } \right) b^{-1} n^{-1/3} \right) = 
O \left( b^{-1} \log^{-2} n \right).$$
If $b^{-1} \log^{-2} n = o(1)$ then, for sufficiently large $n$, we have 
$\exp \left( \widetilde{\alpha} \log (n/\widetilde{\alpha}) \right) < \widetilde{\xi}$. The result follows from Equation (\ref{final_estimate_first_hit_B}).
\hfill
\end{proof}

\begin{theorem} \label{theorem_first_hit_B}
If $b^{-1} \log^{-2} n = o(1)$, then there exist positive constants $c_1, c_2$ such that, for sufficiently large $n$,
$$\mathbb{E}_n^{ \{0,k\} }(\widetilde{\NN}) > \exp \left( c_1 \frac{ (n/\lambda)^{1/3} }{ \log^{3} (n/\lambda) } \right),$$
and, in addition, 
$${\mathbb P}_{n}^{ \{0,k\} } 
\left( \widetilde{\NN} \leq \exp \left( c_2 \frac{ (n/\lambda)^{1/3} }{ \log^{3} (n/\lambda) } \right) \right) \leq 
\exp \left( - c_2 \frac{ (n/\lambda)^{1/3} }{ \log^{3} (n/\lambda) } \right),$$
\end{theorem}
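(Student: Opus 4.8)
The plan is to mirror the proof of Theorem \ref{theorem_first_hit_T}, replacing the tail bound on $\Z$ (the number of cyclic nodes) by a tail bound on the number of cycles. Since $f_1, f_2, \dots$ are independent, $\widetilde{\NN}$ is geometrically distributed by the same reasoning as for $\NN$ in Equation (\ref{distribution_N}), so that $\mathbb{E}_n^{\{0,k\}}(\widetilde{\NN}) = 1/\mathbb{P}_n^{\{0,k\}}(\BB \geq \widetilde{\xi})$ and $\mathbb{P}_n^{\{0,k\}}(\widetilde{\NN} \leq s) \leq s\,\mathbb{P}_n^{\{0,k\}}(\BB \geq \widetilde{\xi})$ for every positive integer $s$. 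Both conclusions therefore reduce to a single upper bound on $\mathbb{P}_n^{\{0,k\}}(\BB \geq \widetilde{\xi})$. First I would apply the Law of Total Probability, splitting according to whether $\CC \leq \widetilde{\alpha}$ or $\CC > \widetilde{\alpha}$, where $\CC$ is the number of cycles and $\widetilde{\alpha} = n^{1/3}/\log^3 n$. Lemma \ref{lemma_CboundCor} gives $\mathbb{P}_n^{\{0,k\}}(\BB \geq \widetilde{\xi} \mid \CC < \widetilde{\alpha}) = 0$ for large $n$, so bounding the other conditional probability trivially by $1$ yields
$$\mathbb{P}_n^{\{0,k\}}(\BB \geq \widetilde{\xi}) \leq \mathbb{P}_n^{\{0,k\}}(\CC \geq \widetilde{\alpha}).$$

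The key step is to bound $\mathbb{P}_n^{\{0,k\}}(\CC \geq \widetilde{\alpha})$. Conditioning on $\Z = m$ and invoking Lemma \ref{lemma_restriction_to_Z}, the number of cycles $\CC$ is distributed as the number of cycles of a uniform permutation of $S_m$, so that
$$\mathbb{P}_n^{\{0,k\}}(\CC \geq \widetilde{\alpha}) = \sum_{m=1}^{r} \mathbb{P}_n^{\{0,k\}}(\Z = m)\, \mathbf{Q}_m(\CC \geq \widetilde{\alpha}).$$
To control $\mathbf{Q}_m(\CC \geq \widetilde{\alpha})$ I would use the classical fact that the generating function $\sum_{j=1}^{m} \mathbf{Q}_m(\CC = j)\, z^j$ equals $z(z+1)\cdots(z+m-1)/m!$; evaluating at $z = 2$ shows that the $\mathbf{Q}_m$-expectation of $2^{\CC}$ equals $m+1$. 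Markov's inequality applied to $2^{\CC}$ then yields $\mathbf{Q}_m(\CC \geq \widetilde{\alpha}) \leq (m+1)2^{-\widetilde{\alpha}} \leq (r+1)2^{-\widetilde{\alpha}}$ uniformly in $m \leq r$. Since the probabilities $\mathbb{P}_n^{\{0,k\}}(\Z = m)$ sum to at most $1$ and $r \leq n$, this gives $\mathbb{P}_n^{\{0,k\}}(\CC \geq \widetilde{\alpha}) \leq (n+1)2^{-\widetilde{\alpha}}$.

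Assembling the pieces, $\mathbb{E}_n^{\{0,k\}}(\widetilde{\NN}) \geq 2^{\widetilde{\alpha}}/(n+1) = \exp(\widetilde{\alpha}\log 2 - \log(n+1))$. Since $\widetilde{\alpha} = n^{1/3}/\log^3 n$ dominates $\log(n+1)$, and since for fixed $\lambda$ one has $n^{1/3}/\log^3 n \asymp (n/\lambda)^{1/3}/\log^3(n/\lambda)$, the first inequality follows with a suitable constant $c_1$. For the second inequality I would set $s = \exp(c_2 (n/\lambda)^{1/3}/\log^3(n/\lambda))$ with $c_2$ chosen small enough that the surplus $c_1 - c_2$ in the exponent still exceeds $c_2$, and combine $\mathbb{P}_n^{\{0,k\}}(\widetilde{\NN} \leq s) \leq s\,\mathbb{P}_n^{\{0,k\}}(\BB \geq \widetilde{\xi})$ with the bound on $\mathbb{P}_n^{\{0,k\}}(\BB \geq \widetilde{\xi})$ just obtained; the leftover negative exponent then produces the stated right-hand side.

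The main obstacle is the tail estimate on the number of cycles. Unlike the $\T$ case, where the upper tail of $\Z$ was already controlled by Lemmas \ref{lemma_unimodal} and \ref{lemma_asymptotic_distribution_Z_0,k}, here no earlier result bounds $\CC$, so the generating-function/Markov argument (or, equivalently, a Chernoff bound on the sum of independent Bernoulli indicators representing the cycle counts) is the one genuinely new ingredient; everything else is a routine transcription of the argument for $\T$.
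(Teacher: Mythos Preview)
Your argument is correct and follows the paper's proof essentially step for step: the geometric-distribution reduction, the split on $\CC$ via Lemma \ref{lemma_CboundCor}, the conditioning on $\Z=m$ through Lemma \ref{lemma_restriction_to_Z}, and the final choice $c_2=c_1/2$ are all exactly what the paper does. The one point of divergence is the tail bound $\mathbf{Q}_m(\CC \geq \widetilde{\alpha})$: the paper invokes a general exponential-tail result of Flajolet and Soria to assert $\mathbf{Q}_m(\CC \geq \widetilde{\alpha}) \leq c_0\,\rho^{-\widetilde{\alpha}}$ uniformly in $m\leq n$, whereas you compute the cycle-count moment generating function explicitly at $z=2$ and apply Markov's inequality to obtain $(m+1)2^{-\widetilde{\alpha}}$. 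Your version is more elementary and self-contained, at the cost of a harmless extra factor $n+1$ that you correctly absorb into the exponent since $\log n = o(\widetilde{\alpha})$; the paper's citation avoids this factor but appeals to a heavier external result. Both lead to the same conclusion with the same constants up to the choice of $\rho$ versus $2$.
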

\begin{proof}
The random variable $\widetilde{\NN}$ has a geometric distribution, hence
\begin{equation} \label{expectation_N_prime}
\mathbb{E}_n^{ \{0,k\} }( \widetilde{\NN} ) =\frac{1}{ \mathbb{P}_n^{ \{0,k\} }( \BB \geq \widetilde{\xi}) },
\end{equation}
where, by the Law of Total Probability, 
\begin{eqnarray*}
    \mathbb{P}_n^{ \{0,k\} }( \BB \geq \widetilde{\xi}) 
&=& \mathbb{P}_n^{ \{0,k\} }( \BB \geq \widetilde{\xi} | {\bf C} 
    \geq \widetilde{\alpha} ) \mathbb{P}_n^{ \{0,k\} }({\bf C} 
    \geq  \widetilde{\alpha}) + \\
 && \mathbb{P}_n^{ \{0,k\} }( \BB  \geq  \widetilde{\xi} | {\bf C} 
    < \widetilde{\alpha})\mathbb{P}_n^{ \{0,k\} }
    ( {\bf C} <\widetilde{\alpha}).
\end{eqnarray*}
As a consequence of Lemma \ref{lemma_CboundCor} we have $\mathbb{P}_n^{ \{0,k\} }( \BB \geq \widetilde{\xi} | {\bf C} < \widetilde{\alpha} ) = 0$, so $\mathbb{P}_n^{ \{0,k\} }( \BB \geq \widetilde{\xi} | {\bf C} \geq \widetilde{\alpha} ) \leq 1$ implies
\begin{equation} \label{probability_B_greater_xi}
\mathbb{P}_n^{ \{0,k\} }( \BB \geq \widetilde{\xi}) \leq \mathbb{P}_n^{ \{0,k\} }(  {\bf C} \geq  \widetilde{\alpha}).
\end{equation}
Using again the Law of Total Probability one obtains
\begin{equation} \label{probability_C_greater_alpha}
\begin{split}
\mathbb{P}_n^{ \{0,k\} }(  {\bf C} \geq  \widetilde{\alpha} )
& = 
\sum\limits_{m}\mathbb{P}_n^{ \{0,k\} }(  {\bf C} \geq  \widetilde{\alpha} | \Z =m) \mathbb{P}_n^{ \{0,k\} }( \Z=m) \\
& = 
\sum\limits_{m}{\mathbf Q}_{m}({\bf C} \geq  \widetilde{\alpha}) \mathbb{P}_n^{ \{0,k\} }( \Z=m).
\end{split}
\end{equation}
Using moment generating functions \cite{FlajoletSoria1993GeneralSchemas} one is able to prove that there exist constants 
$\rho > 1$ and $c_0 > 0$ such that ${\bf Q}_{m}(\CC \geq \widetilde{\alpha}) \leq c_0 \rho^{-\widetilde{\alpha}}$ for all $1 \leq m \leq n$. Thus 
Equations (\ref{probability_B_greater_xi}) and (\ref{probability_C_greater_alpha}) imply
$$\mathbb{P}_n^{ \{0,k\} }( \BB \geq \widetilde{\xi}) \leq c_0 \rho^{-\widetilde{\alpha}} \sum_{m = 1}^n \mathbb{P}_n^{ \{0,k\} }( \Z=m) \leq c_0 \rho^{-\widetilde{\alpha}}.$$
It follows from Equation (\ref{expectation_N_prime}) that 
\begin{equation} \label{first_hit_B_proof}
\mathbb{E}_n^{ \{0,k\} }(\widetilde{\NN}) \geq \exp \left( - \log c_0 + \frac{ (n/\lambda)^{1/3} }{ \log^{3} (n/\lambda) } \log \rho \right) \geq \exp \left( c_1 \frac{ (n/\lambda)^{1/3} }{ \log^{3} (n/\lambda) } \right),
\end{equation}
for $c_1 = (\log \rho)/2$ and sufficiently large $n$.

We conclude the proof with an argument analogous to the one in the proof of Theorem \ref{theorem_first_hit_T}.
For any positive integer $s$ we have
$${\mathbb P}_{n}^{ \{0,k\} }(\widetilde{\NN} \leq s) = \sum_{1 \leq j \leq s} {\mathbb P}_{n}^{ \{0,k\} }(\widetilde{\NN} = j) \leq 
\sum_{1 \leq j \leq s} \mathbb{P}_n^{ \{0,k\} }( \BB \geq \widetilde{\xi}) \mathbb{P}_n^{ \{0,k\} }( \BB < \widetilde{\xi})^{j-1},$$
so ${\mathbb P}_{n}^{ \{0,k\} }(\widetilde{\NN} \leq s) \leq s \mathbb{P}_n^{ \{0,k\} }( \BB \geq \widetilde{\xi})$.
It follows from Equations (\ref{expectation_N_prime}) and (\ref{first_hit_B_proof}) that, for $s = \exp \left( c_2 \frac{ (n/\lambda)^{1/3} }{ \log^{3} (n/\lambda) } \right)$ and $c_2 = c_1/2$,
$${\mathbb P}_{n}^{ \{0,k\} } 
\left( \widetilde{\NN} \leq \exp \left( c_2 \frac{ (n/\lambda)^{1/3} }{ \log^{3} (n/\lambda) } \right) \right) \leq  
\exp \left( (c_2 - c_1) \frac{ (n/\lambda)^{1/3} }{ \log^{3} (n/\lambda) } \right),$$
where $c_2 - c_1 = - c_2$.
\hfill
\end{proof}

\subsection{Numerical Results} \label{subsection_numerical_results}

We exhibit in Table \ref{table_numerical_results} our numerical results on the behavior of $\T$ and $\BB$ over different classes of polynomials over finite fields and different classes of mappings. 
For each value of $k$, we consider the first $100$ primes greater than $10^3$ of the form indicated in Table \ref{table_numerical_results}. For each such prime, we select, according to Algorithm 1, $p$ mappings on $n = p-1$ nodes; we also consider all $p$ polynomials of the form indicated in Table 1.
We compute the exact value of $\T$ for each function and compute the corresponding average values $\overline{\T}(p)$. We compute the ratio $R_\T(p)$ between $\log \overline{\T}(p)$ and the quantity in Theorem \ref{theorem_T}. 
In Table \ref{table_numerical_results} we exhibit the average value $\overline{R_\T}$ of $R_\T(p)$ over all primes considered; we stress the dependence of this calculation on the coalescence $\lambda$ of the corresponding class by adopting the notation $\overline{R_\T}(\lambda)$. The same is done for the parameter $\BB$.

\begin{table}
\centering
\begin{tabular}{cccc}
\hline
Class of functions 	& Asymptotic Coalescence & $\overline{R_\T}(\lambda)$ & $\overline{R_\BB}(\lambda)$ \\ \hline
Unrestricted mappings & $1$ & $0.8090$ & $0.7247$ \\
$\{0,2\}$-mappings    & $1$ & $0.7929$ & $0.7097$ \\
$x^2 + a \in \mathbb{F}_p[x]$ 						& $1$ & $0.8031$ & $2.4183$ \\
$x^4 + a \in \mathbb{F}_p[x]$, $p \equiv 3 \pmod 4$ & $1$ & $0.8033$ & $3.9237$ \\
$\{0,3\}$-mappings    & $2$ & $0.7700$ & $0.7043$ \\ 
$x^3 + a \in \mathbb{F}_p[x]$, $p \equiv 1 \pmod 3$ & $2$ & $0.7631$ & $2.5067$ \\
$\{0,4\}$-mappings    & $3$ & $0.7436$ & $0.7007$ \\
$x^4 + a \in \mathbb{F}_p[x]$, $p \equiv 1 \pmod 4$ & $3$ & $0.7391$ & $2.6055$ \\
$\{0,5\}$-mappings    & $4$ & $0.7465$ & $0.7041$ \\
$x^5 + a \in \mathbb{F}_p[x]$, $p \equiv 1 \pmod 5$ & $4$ & $0.7435$ & $3.3597$ \\
$\{0,6\}$-mappings    & $5$ & $0.6986$ & $0.6789$ \\
$x^6 + a \in \mathbb{F}_p[x]$, $p \equiv 1 \pmod 6$ & $5$ & $0.6989$ & $1.3522$ \\  \hline
\end{tabular}
\caption{Experimental results on mappings and polynomials according to their coalescence.} 
\label{table_numerical_results}
\end{table}

It is not surprising to have the ratio $\overline{R_\T}$ distant from $1$ even in the case of $\{0,k\}$-mappings, where we have an asymptotic result proved on the logarithm of the expectation of $\T$. It is proved in Theorem \ref{theorem_first_hit_T} that most of the contribution to $\mathbb{E}^{ \{0,k\} }_n (\T)$ comes from a relatively small set of exceptional maps. Unless the number of samples is enormous, as stated in the first part of the theorem, none of these exceptional maps is likely to be sampled, so our empirical estimate for $\mathbb{E}^{ \{0,k\} }_n (\T)$ is likely to be poor.
The ratios $\overline{R_\T}$ appear to decrease as $\lambda$ grows large, but this agrees, in a way, with the fact that the upper bound in Theorem \ref{theorem_first_hit_T} decreases as $k$ grows large.

Regardless of the sampling problem explained in Section \ref{subsection_sampling_mappings}, it is remarkable that the ratio between any two entries in the table above for $\overline{R_\T}$ with the same value of $\lambda$ lies in the interval $(0.97, 1.03)$. This suggests that the behavior of a typical $\{0,k\}$-polynomial can be approximated by the behavior of a typical $\{0,k\}$-mapping. However, one must be careful when using the asymptotic estimate in Theorem \ref{theorem_T} as a reference, due to the results in Theorem \ref{theorem_first_hit_T}.
The numerical results for the parameter $\BB$, on the other hand, represent a different scenario, where the ratio between numerical results for classes with the same value of asymptotic coalescence were found to be as high as $4.8835$. 
It is interesting but not clear why the heuristic performs so poorly in the approximation of the statistics of polynomials by mappings in the case of the parameter $\BB$.

\end{document}